\newcommand{\vep}{\varepsilon}
\newcommand{\ome}{\omega}
\newcommand{\Cb}{\ensuremath{\mathcal C_b}}
\newcommand{\be}{\begin{equation}}
\newcommand{\ee}{\end{equation}}
\newcommand{\bea}{\begin{eqnarray}}
\newcommand{\eea}{\end{eqnarray}}
\newcommand{\beaa}{\begin{eqnarray*}}
\newcommand{\eeaa}{\end{eqnarray*}}
\newcommand{\eps}{\varepsilon}
\newcommand{\MF}{\cff}
\newcommand{\MC}{\mathscr{C}}
\newcommand{\ML}{\mathrm{Lin}}
\newcommand{\MA}{\mathcal{A}}
\newcommand{\MG}{\mathcal{G}}
\newcommand{\MS}{\mathcal{S}}
\newcommand{\MO}{\ooo}
\newcommand{\Bx}{\mathbf{x}}
\newcommand{\BW}{\mathbf{W}} 
\newcommand{\BD}{\mathbf{D}}
\newcommand{\MV}{\mathcal{V}}
\newcommand{\tY}{\tilde{Y}}
\newcommand{\tZ}{\tilde{Z}}
\newcommand{\tMV}{\tilde{\MV}}
\newcommand{\bY}{\bar{Y}}
\newcommand{\bMV}{\bar{\MV}}
\newcommand{\bBX}{\bar{\mathbf{X}}}
\newcommand{\bt}{\bar{t}}
\newcommand{\by}{\bar{y}}
\newcommand{\E}{\mathbb{E}}
\newcommand{\R}{\mathbb{R}}
\newcommand{\N}{\mathbb{N}}
\newcommand{\X}{\mathbb{X}}
\newcommand{\BX}{\mathbf{X}}
\newcommand{\By}{\mathbf{y}}
\newcommand{\op}{\mathcal{P}}
\newcommand{\bx}{\mathbf{x}}
\newtheorem{thm}{Theorem}[section]
\newtheorem{theorem}[thm]{Theorem}
\newtheorem{assumption}[thm]{Assumption}
\newtheorem{lem}[thm]{Lemma}
\newtheorem{coro}[thm]{Corollary}
\newtheorem{corollary}[thm]{Corollary}
\newtheorem{rem}[thm]{Remark}
\newtheorem{remark}[thm]{Remark}
\newtheorem{prop}[thm]{Proposition}
\newtheorem{proposition}[thm]{Proposition}
\newtheorem{exam}[thm]{Example}
\newtheorem{defi}[thm]{Definition}
\newtheorem{definition}[thm]{Definition}
\newcommand{\cdummy}{\cdot}
\newcommand{\tmem}[1]{{\em #1\/}}
\crefname{assumption}{Assumption}{Assumptions}
\numberwithin{equation}{section}
\newcommand{\assign}{:=}
\newcommand{\tmop}[1]{\ensuremath{\operatorname{#1}}}
\newcommand{\bk}[1]{\llbracket#1\rrbracket}
\newcommand{\tand}{\quad\textrm{and} \quad}
\newcommand{\ctrl}{\eta}
\newcommand{\bctrl}{\bar{\eta}}
\newcommand{\caa}{\mathcal A}
\newcommand{\aaa}{\mathfrak A}
\newcommand{\cff}{{\mathfrak F}}  
\newcommand{\uuu}{\mathfrak U} 
\newcommand{\bbb}{\mathfrak B} 
\newcommand{\ooo}{\mathfrak O} 
\newcommand{\mmm}{\mathfrak M} 
\newcommand{\cpp}{{\mathcal P}}
\newcommand{\XX}{\ensuremath{\mathbb{X}}}
\newcommand{\Rd}{{\R^d}}
\renewcommand{\P}{\ensuremath{{\mathbb P}}}
\newcommand{\D}{\ensuremath{{\mathbf D}}}
\newcommand{\lip}{\ensuremath{{\mathrm{Lip}}}}
\DeclareMathOperator*{\esssup}{ess\,sup}
\newcommand{\C}{{\mathcal{C}}}
\newcommand{\CC}{{\mathscr{C}}}
\newcommand{\Ca}{{\mathscr{C}}^{\alpha}}
\newcommand{\Coa}{{\mathscr{C}}^{0,\alpha}}
\newcommand{\BCa}{{\mathfrak{C}}^{\alpha}}
\newcommand{\BCoa}{{\mathfrak{C}}^{0,\alpha}}
\newcommand*\bigcdot{{\mathpalette\bigcdot@{.5}}}
\newcommand*\bigcdot@[2]{\mathbin{\vcenter{\hbox{\scalebox{#2}{$\m@th#1\bullet$}}}}}
\DeclareSymbolFont{fouriersymbols}{FMS}{futm}{m}{n}
\DeclareSymbolFont{fourierlargesymbols}{FMX}{futm}{m}{n}
\DeclareMathDelimiter{\vvert}{\mathord}{fouriersymbols}{152}{fourierlargesymbols}{147}
\DeclarePairedDelimiter{\nn}{\vvert}{\vvert}
\begin{document}

\title[\tiny{Controlled RSDEs, pathwise stochastic control, dynamic programming principles}]{Controlled rough SDEs, pathwise stochastic control and dynamic programming principles}

\author{Peter K.~Friz}
\address{TU Berlin and WIAS Berlin}
\email{friz@math.tu-berlin.de}

\author{Khoa L\^e}
\address{University of Leeds}
\email{k.le@leeds.ac.uk}

\author{Huilin Zhang}
\address{Shandong U. and Humboldt U.}
\email{huilinzhang@sdu.edu.cn}

\subjclass[2020]{Primary 60L20, 60H10}

\keywords{Rough SDEs, pathwise stochastic control.}

\begin{abstract}
 
We study stochastic optimal control of rough stochastic differential equations (RSDEs). This is in the spirit of the pathwise control problem (Lions--Souganidis 1998, Buckdahn--Ma 2007; also Davis--Burstein 1992), 
with renewed interest and recent works drawing motivation from filtering, SPDEs, and reinforcement learning. 

Results include regularity of {\em rough} value functions, validity of a {\em rough} dynamic programming principles and new {\em rough} stability results for HJB equations, removing excessive regularity demands previously imposed by flow transformation methods. 

Measurable selection is used to relate RSDEs to ``doubly stochastic'' SDEs under conditioning.  In contrast to previous works, Brownian statistics for the to-be-conditioned-on noise are not required, aligned with the ``pathwise'' intuition that these should not matter upon conditioning. Depending on the chosen class of admissible controls, the involved processes may also be anticipating. The resulting stochastic value functions coincide in great generality for different classes of controls. RSDE theory offers a powerful and unified perspective on this problem class.

\end{abstract}




\maketitle

\tableofcontents
\section{Introduction} 
\label{sec.introduction}
Consider a {partially} controlled multidimensional diffusion $Y = Y^{\theta}$ with It\^o dynamics
\be dY_t = b (t,Y_t ; \theta_t) dt + \sigma (t,Y_t ; \theta_t) d B_t + f (t,Y_t) d W_t . \label{equ:11}
\ee
Here $B$ and (for the moment) $W$ are independent Brownian motions, $\theta$ is taken in some class of ``admissible'' controls $\Theta$. {\em Pathwise stochastic control} 
is concerned with minimizing some expected cost, conditionally on $W$. Specifically, 
\be  V^{\Theta} (s, y, \omega) := \mathrm{essinf}_{\theta  \in \Theta}
    \mathbb{E}^{s, y}  \left( g (Y _T)  + \int_s^T \ell (t, Y_t, \theta_r)dr \Big| \mathfrak{F}_T^W  \right)  \label{equ:12}
 \ee
defines a random field, which is related to (see \cite{LS98,LS98b,BM07}) non-linear stochastic partial differential equations (SPDEs) of Hamilton--Jacobi--Bellmann (HJB) type, of the form 
 \be \label{equ:sHJB}
        - d_t v = H (y, t, D v, D^2 v) dt  + (f (t,y) \cdot Dv) \circ d W_t, \qquad v(T,\cdot) \equiv g.
 \ee
 Classical It\^o theory is not well equipped to treat this SPDE. Taking a {\em pathwise} view, with $W(\omega)$ replaced by a generic continuous path $X$, it was proposed in \cite{LS98b} to understand such value functions as ``stochastic viscosity solution'' of \eqref{equ:sHJB}: their analysis is based on a deterministic path $X$, the problem only becomes ``stochastic'' at the very end upon {\em randomization}, 
  \[ X \rightsquigarrow W(\omega) ; \]
 with no Brownian assumptions necessary, it suffices that $W$ has continuous realizations. The same logic applies to multidimensional $W$ and general vector fields $f$, upon taking a {\em rough-pathwise} perspective \cite{CFO11}. (The notion of pathwise stochastic control actually goes back at least to Davis and Burstein \cite{DB92}, cf.  \cite{DFG17, AC20} for a discussion.) 
 
 A direct stochastic approach to \eqref{equ:sHJB} and the associated control problem was pursued in \cite{BM07}, assuming Brownian statistics for $(B,W)$, essentially relying on stochastic flow transformations such as to ``transform away'' the $dW$-term in \eqref{equ:11} and \eqref{equ:sHJB}, respectively. 
Importantly, \cite{BM07} offers profound results to the control problem itself. A key subtlety lies in the ambiguity what one means by ``admissible''. While classical stochastic control theory suggests to consider controls adapted to the information generated by $(B,W)$ (for which the authors show that one may not even have a ``minimizing'' sequence), the particular form of the conditioning in \eqref{equ:12} suggests to work with the augmented filtration that contains all the information of $W$ until terminal time $T$. Such controls however, if employed in \eqref{equ:11} would lead to considerations of  anticipating stochastic differential equations, avoided in \cite{BM07} through stochastic flow transformations. Surprisingly perhaps, the resulting (random) value functions coincide and stochastic dynamic programming principles are seen to hold, though formulations of these results are somewhat indirect due to the said transformation approach. 

If one aims to connect \eqref{equ:12} with classical stochastic control theory, one can add constraints to enforce suitable adaptness of the controls, as was carried out (in case of $\sigma \equiv 0$) in \cite{DFG17}, building on ideas of \cite{DB92, rogers2007pathwise}, see also \cite{AC20, BCO23, CHT24} for recent contributions to pathwise stochastic control in the context of filtering, SPDEs, and reinforcement learning, respectively.  

Classically, the restriction to adapted controls is tied to the interpretation of the time variable $t$ in \eqref{equ:11} and \eqref{equ:12} as {\em physical time} and the idea that one does not know the future. Numerous works in machine learning have led to other interpretations: for instance, a controlled evolution on $[0,T]$ has seen fruitful interpretations as continuous limit of a deep learning networks in the infinite layer limit, in which case  the ``future'' of the noise (e.g. induced by initialization of deep neural network weights)  is fully available.  (It is not the purpose of this work to offer an interpretation of \eqref{equ:12} in this direction, but see \cite{bayer2023stability, gassiat2024gradient} for works in this spirit.) 

The purpose of this work is to revisit \eqref{equ:11} and \eqref{equ:12} through the lenses of {\em rough stochastic differential equations} \cite{FHL21}. In essence, this allows replacing $W$ in \eqref{equ:11}, \eqref{equ:12} by a (purely deterministic) $\alpha$-H\"older rough path $\BX=(X,\mathbb{X})$, while maintaining Brownian statistics for $B$, in the form of a $(\mathfrak{F}_t)$-Brownian motion on some probability space $(\Omega, (\mathfrak{F}_t)_{t\ge0},
\mathfrak{F} , \mathbb{P})$. Consider the {\em rough} SDE (following \cite{FHL21}, with a quick review in \cref{sec:RSDE}) 
\be dY^\BX_t = b (t,Y^\BX_t ; \eta_t) dt + \sigma (t,Y^\BX_t ; \eta_t) d B_t + (f,f') (t,Y^\BX_t) d \BX_t , \label{equ:11r}
   \ee
with adapted controls $\eta \in \mathcal{A}$ (cf. Section \ref{def:ad-cont})  and 
\be
\mathcal{V} (s, y ; \mathbf{X} ) := \inf_{\eta  \in \mathcal{A} }
   \mathbb{E}^{s, y}  \left( g (Y^{\BX}_T)  + \int_s^T \ell (t, Y^{\BX}_t, \eta_r)dr   \right) , \label{equ:12r}
 \ee
which we call {\em rough} value function. Our approach offers a variety of advantages compared to earlier works.
\begin{itemize}
\item No more ambiguity on the notion of admissible control: with $\BX$ being  deterministic, there is only one filtration $(\mathfrak{F}_t)$.
\item With \eqref{equ:11r}, direct (rough)path-wise meaning is given to \eqref{equ:11}, i.e. without imposing (ultimately irrelevant) Brownian statistics for $W$.
\item Dynamic programming principle (DPP) for \eqref{equ:12r} is formulated with ``$\inf$'' rather than ``$\mathrm{essinf}$'', the existence of minimizing sequences is trivially guaranteed. 
\item Regularity of the value function comes directly from \eqref{equ:12r}, bypassing any need to construct careful modifications of \eqref{equ:12}.
\item Reduction of regularity demands \cite{CFO11}, e.g. when $\alpha=(\tfrac{1}{2})^-$, from $f \in \C_b^4$ to $\C_b^{2+}$, in rough path stability of HJB equations.
\end{itemize} 
All this is achieved in  \cref{DPP_for_RSDEs} and  \cref{sec:HJB}. Let us note here that $(f,f')$ is a deterministic controlled (in sense of Gubinelli) vector field which allows for ``rough'' time dependence in $f$. The integral meaning of the last term in \eqref{equ:11r} is then, in the sense of u.c.p. convergence,
$$
\int (f,f') (r,Y^\BX_r) d \BX_r \sim \sum \Big( f (s, Y^\BX_s) (X_t -X_s) + ((Df_s) f_s +f'_s) (Y_s^\BX) (\mathbb{X}_{s,t}) \Big),
$$
leaving additional reminders on stochastic rough integrals to \cref{sec:RSDE}. At first reading the reader may consider $f' \equiv 0$ which accommodates the case of autonomous $f$, and also Young-complementary $t$-regularity, neither of which holds if $f$ were to depend on $\mathrm{Law}(Y_t)$: the generality of our setup is precisely rooted in preparing the grounds for mean-field controlled rough stochastic differential equations, which (as also suggested in \cite{carmona2014master}) 
ultimately may offer new tools to the analysis of mean field games conditionally on common noise.
\medskip

\noindent {\em Randomization: } To some extent, this paper could have ended here, at least if one is prepared to regard a rough path as perfect model for a noise that one wants to treat as ``frozen'' analytic object.  That said, the remainder of this work is devoted to the randomization of the afore-mentioned results,
$ \mathbf{X} \rightsquigarrow \mathbf{W} (\omega).$
This is natural, on the one hand, since many examples of rough paths actually do come from stochastic processes, enhanced with iterated integrals or L\'evy's area, e.g. \cite{FV10}. On the other other hand, this allows to reconnect with previous works, notably \cite{BM07}, leading to a number of improvements, including
\begin{itemize}
\item flexible meaning to \eqref{equ:11}, without Brownian assumptions on $W$ (it is immediate to treat e.g. fractional Brownian motion in the regime $H>1/3$),
\item reduction of regularity demands \cite{BM07}, again from $f \in \C_b^4$ to $\C_b^{2+}$,
\item modulus of continuity for the random field \eqref{equ:12r}, left as an open problem in \cite{BM07},
\item unified treatment of \eqref{equ:11} for different classes of admissible controls (not depending on $W$, adapted, $W$-anticipating),
\item general understanding why all lead to  the same stochastic value function, validity of stochastic DPP .
\end{itemize} 
We start with the {\em 1st randomization} of the value function,
\[ \bar{\mathcal{V}} (s, y ; \mathbf{} \omega) \assign \mathcal{V} (s, y ;
   \mathbf{} \mathbf{W} (\omega)) =\mathcal{V} (s, y ; \mathbf{}
   \mathbf{X}) |_{\mathbf{X} = \mathbf{W} (\omega)}.   \]
The {\em 2nd randomization} concerns the RSDE itself. To this end we move to a product space that supports a Brownian motion $B=B(\omega')$ and a random rough path  $\mathbf{W} (\omega'') \perp B (\omega')$, writing $\omega = (\omega',
\omega'')$. Given a family of $\mathfrak{F'}$-adapted controls $\eta_t (\omega' ,\BX)$, indexed by $\BX$, we can solve, for each $\BX$, the 
 rough SDE $Y = Y^{\eta, \mathbf{X}} (\omega')$ on
$\Omega'$, started from some fixed $Y_0 = \xi$, 
\[ d Y_t = b (t, Y_t;  \eta_t (\omega' , \BX)) d t + \sigma (t, Y_t ; \eta_t (\omega' ,
   \BX)) d B_t (\omega')  + f (t,Y_t) d \BX_t  .
    \]
Write $\bar{\eta}_t (\omega) = \eta_t (\omega' , \mathbf{W} (\omega''))$ and then $\bar{Y}^{\bar{\eta}} (\omega) = Y^{\bar{\eta},
\mathbf{W} (\omega'')} (\omega')$ for the respective randomization of $\eta$ and $Y$. 
By $( \cff'_t)$-adaptness of controls, and under minimal measurability assumptions (on $\eta$ and $\mathbf{W}$; details left to Sections \ref{sec:pathwise-control}) 
we can expect $\bar{\eta}_t$ to be  $\in \cff'_t
\vee \cff''_T$, hence partially anticipating.
Thanks to 
a number of new measurable selection results for RSDEs (interesting in their own right, \cref{sec:measurable_prelim}) this defines
measurable process, with
\[ \bar{Y}^{\bar{\eta}}_t (\omega', \omega'') \in \cff'_t \vee
   \cff''_T . \]
In this sense, every class $\mathcal{A}^{\circ}$ of (suitably jointly measurable) RSDE controls $\eta = \eta (\omega', \BX)$, 
induces a class of randomized controls $\bar{\mathcal{A}}^{\circ}$, defined by $\bar{\eta}_t (\omega) = \eta_t (\omega' , \mathbf{W} (\omega''))$, which gives rise to a random value function
\[ \mathcal{V}^\circ (s, y ; \mathbf{} \omega) \assign
    {\tmop{essinf}_{\eta  \in \mathcal{A}^\circ}} \mathbb{E}^{s,
   y} (g (\bar{Y}^{\bar{\eta}}_T)   {| \cff_T^{\mathbf{W}}
    }) =  {\tmop{essinf}_{\bar{\eta}  \in
   \bar{\mathcal{A}}^\circ}} \mathbb{E}^{s, y} (g (\bar{Y}^{\bar{\eta}}_T)
     {| \cff_T^{\mathbf{W}}  }). \]
  
Unlike the situation of \eqref{equ:12r} where there was no ambiguity about the class of $( \cff'_t)$-admissible controls, we here have a choice since $\bar{\mathcal{A}}^{\circ}$ manifestly depends on how the controls in $\mathcal{A}^{\circ}$ are allowed to depend on the rough path $\BX \in \mathscr{C}_T$. 
Consider the two extreme choices:

\medskip

\noindent (1) Let  $\mathcal{A}^1$ be the ``minimal'' choice of controls $\eta = \eta (\omega',\BX)$ which in fact are not allowed to depend on $\BX$ (this will be formalized in Section 5 via the trivial $\sigma$-field on rough path space). 

\medskip

\noindent (2) Let $\mathcal{A}^2$ be the ``maximal'' choice that allows for any (measurable) dependence in $\BX = \{ \BX_t : 0 \le t \le T \}$. In this case, controls in $\bar{\mathcal{A}}^2$ are only adapted to the partially augmented filtration $\{\cff'_t \vee \cff''_T: 0 \le t \le T\}$, thus partially anticipating.
\medskip

Of course, there are many other choices $\mathcal{A}^{\circ}$ (with $\mathcal{A}^{1}\subset \mathcal{A}^{\circ}\subset \mathcal{A}^{2}$), 
such as $\mathcal{A}^{\mathrm{causal}}$, defined as those controls in $\mathcal{A}^2$ which are causal in $\BX$ (see equation \ref{equ:Acausal} below), in which case $\bar{\mathcal{A}}^{\mathrm{causal}}$ consists of jointly adapted controls, with $\bar \eta_t \in \{\cff'_t \vee \cff''_t\}$. But since $\mathcal{V}^\circ$ is montone w.r.t. $\mathcal{A}^\circ$, it will indeed be enough to treat the extreme cases.

Introducing the respective random value functions, 
\[ \mathcal{V}^i (s, y ; \mathbf{} \omega) \assign
    {\tmop{essinf}_{\bar{\eta}  \in
   \bar{\mathcal{A}}^i}} \mathbb{E}^{s, y} (g (\bar{Y}^{\bar{\eta}}_T)
     {| \cff_T^{\mathbf{W}}  }),\quad i=1,2, \]
our main result in \cref{sec:pathwise-control}, valid in the full generality of a (level $2$, H\"older) random rough path $\mathbf{W}(.)$,  can be summarized by saying that the randomized rough value function $ \bar{\mathcal{V}}$ is a continuous modification of both $\mathcal{V}^1$ and $\mathcal{V}^2$ with a.s. regularity inherited from the regularity of the rough value function $\mathcal{V}$ (\cref{thm:RoughValueReg}). In particular, 
for every $(s,y) \in [0,T] \times \R^{d_Y}$, with probability one,
 $$
\bar{\mathcal{V}} (s, y ;  \omega) = \mathcal{V}^1 (s,
  y ;  \omega) =\mathcal{V}^2 (s, y ;  \omega).$$
We furthermore see that a stochastic DPP holds for the random fields $\mathcal{V}^1$ and $\mathcal{V}^2$, essentially as consequence of the deterministic dynamic programming principle  for the rough value function $\mathcal{V}$. 

Our final \cref{sec:BMcase} is devoted to specialize to the case when $B,W$ has jointly Brownian dynamics. More specifically, assume that $\BW$ is the (It\^o) Brownian rough path,
and $\bar{\eta} = \bar{\eta}_t (\omega) \in \bar{\mathcal{A}}^1$ then
$\bar{Y}^{\bar{\eta}} (\omega)$ solves the It\^o SDE 
\[ d Y = b (t,Y_t; \eta_t (\omega')) d t + \sigma (t,Y_t; \eta_t (\ome')) d B (\omega') + f
   (t,Y_t) d W (\omega'') \]
and this remains true for $\mathbf{X}$-causal controls (
i.e. $\eta_t (\ome' , \mathbf{X}) = \eta_t (\ome' , \mathbf{X}_{. \wedge t})$) in
which case $\bar{\eta}$ is \ \{$\cff'_t \vee \cff''_t$\}-adapted, situation(s) considered in \cite{BM07}. For $\bar{\eta} = \bar{\eta}_t (\omega) \in \bar{\mathcal{A}}^2$,
$\bar{\eta} $ is not $\cff'_t \vee \cff''_t$ adapted, so that
$\bar{Y}^{\bar{\eta}}$ can be seen as solution to some anticipating SDE: this is provided {\em en passant} by our randomized RSDE approach, with no reliance on any results
from anticipating stochastic calculus or stochastic flow transformations (as employed in \cite{BM07}).

Randomized RSDEs thus also provide a unification, that deals simultaneously
with {partially anticipating coefficients} and non-Brownian noise $W$ (independent of Brownian noise 
$B$). 

\medskip
\noindent {\bf Acknowledgement}: 
PKF and HZ acknowledge support from DFG CRC/TRR 388 ``Rough
Analysis, Stochastic Dynamics and Related Fields'', Projects A07, B04 and B05. Part of this work was carried out during a visit of the first author to Shandong University. 
KL acknowledges supports from EPSRC
[grant number EP/Y016955/1] and from the Humboldt fellowship while at TU Berlin where this project was commenced. HZ is partially supported by NSF of China and Shandong (Grant Numbers 12031009, ZR2023MA026), Young Research Project of Tai-Shan (No.tsqn202306054).

\section{Preliminaries and Notation} \label{sec:notation}
\subsection{Generalities}  A filtered probability space is denoted by $(\Omega, \cff, (\cff_t)_{t \geq 0}, \P)$ and said to satisfy the {\em usual conditions} if $\cff_t = \bigcap_{s > t} \cff_s \  \text{for all } t \geq 0$ and each \(\cff_t\) contains all \( \mathbb{P} \)-null sets of \( \cff \). Given generic measure spaces $(M_i,\mmm_i), i =1,2$ we say $f:M_1 \to M_2$ is $\mmm_1 / \mmm_2$-measurable if $f^{-1} (\mmm_2) \subset \mmm_1$. If $U$ is a topological space, we equip it with its Borel sets, $\uuu = \bbb (U)$, unless otherwise stated; if furthermore $U$ is Polish, we call $(U,\uuu) = (U,\bbb (U))$ a {\em Polish measure space}.
We also write $\bbb_T := \bbb ([0,T])$ and $\bbb^d := \bbb (\R^d)$.

Throughout the paper, \(T>0\) is a fixed time horizon and \(\Delta_T\) denotes the simplex \(\{(s,t)\in[0,T]^2:s\le t\}\). 
For a path \((Z_t)_{t\in[0,T]}\) and a two-parameter path \((A_{s,t})_{(s,t)\in \Delta_T}\), we denote \(\delta Z_{s,t}\) for the increment \(Z_t-Z_s\) for each \(s,t\) and for each \(\beta>0\) we set
\begin{align*}
   |A|_\beta=\sup_{(s,t)\in \Delta_T}\frac{|A_{s,t}|}{(t-s)^\beta}.
\end{align*}
\subsection{Rough paths}  Let $\alpha \in (\tfrac{1}{3}, \tfrac{1}{2}]$. Following \cite[Sec. 2]{FH20} the H\"older rough path space
\begin{equation}
	\label{def.Ca}
\mathscr{C}_T^{\alpha} := \mathscr{C}^{\alpha}([0,T]; \R^d), \\[0.3em]
\end{equation}
equipped with the (``inhomogeneous'') metric
\begin{equation}
	\label{def.rho_metric}
		\rho_{\alpha}(\BX,\bar\BX)=|\delta X- \delta\bar X|_\alpha+|\XX-\bar\XX|_{2\alpha}, 
\end{equation}
is a complete but non-separable metric space.
The homogenous rough path norm is given by $$ \nn{\BX}_\alpha= |\delta X|_\alpha \vee \sqrt{|\XX|_{2 \alpha}}. $$  
Several closed subspaces of $\mathscr{C}_T^{\alpha}$ are of interest,
\begin{align*}
  \mathscr{C}_T^{0,\alpha} 
    &:= \mathscr{C}^{0,\alpha}([0,T]; \R^d), \\[0.3em]
  \mathscr{C}_{g,T}^{\alpha} 
    &:= \mathscr{C}_g^{\alpha}([0,T]; \R^d), \\[0.3em]
  \mathscr{C}_{T}^{0,\alpha} 
    &:= \mathscr{C}_g^{0, \alpha}([0,T]; \R^d), 
\end{align*}
where 
$\mathscr{C}_{g,T}^{\alpha}$ is the (non-separable) space of weakly geometric $\alpha$-H\"older rough paths, 
whereas $\mathscr{C}_{g,T}^{0,\alpha}$ is the (Polish) space of $\alpha$-H\"older geometric rough paths obtained as the 
$\rho_\alpha$-closure of canonically lifted smooth $\R^d$-valued paths. A bit less well-known perhaps, though discussed in detail in \cite[Ex. 2.8, 2.12]{FH20},  is the (Polish) space
$
\mathscr{C}_T^{0,\alpha}, 
$ 
strictly bigger than $\mathscr{C}^{0,\alpha}_{g;T}$, obtained as closure of (possibly non-geometric) smooth rough paths.\footnote{Warning: In modern terminology, motivated by the notion of smooth model in regularity structures, smooth rough paths are simply smooth paths with values in  $\mathbb{R}^d \oplus \mathbb{R}^{d\times d}$. For example, when $d=1$, the map $t \mapsto (0;t)$ is an example of a  smooth (non-geometric, pure second level) rough path.}

The notion of $\BX$-causality will be applied to (adapted) stochastic processes that depend additionally on $\mathbf{X} \in \mathscr{C}_T$ as parameter: we call a process $g$ {\em causal in} $\BX$  (short: $\BX$-causal) if
\begin{equation} \label{equ:causal} 
g(t,\omega; \BX) = g(t,\omega;\BX_{\cdot \wedge t}). 
\end{equation}

 \subsection{Function spaces}  

For some normed vector spaces \(K,\bar{K}\), we denote by \(\C_b(K,\bar K)\) the space continuous bounded functions from \(K\) to \(\bar K\) equipped with the norm
\begin{align*}
   f\mapsto |f|_\infty=\sup_{y\in K}|f(y)|.
\end{align*}
For each real number  \(\gamma>0\), we write $\C_b^\gamma (K;\bar K)$ for the classical Lipschitz space of functions from \(K\) to \(\bar K\) with regularity $\gamma$. 
More precisely, for $\gamma=N+\beta$ where $N$ is a non-negative integer and $0<\beta\le 1$, $\C^\gamma_b(K;\bar K)$ consists of bounded functions $f\colon K\to \bar K$ such that $f$ has Fr\'echet derivatives up to order $N$, $D^jf$, $j=1,\ldots,N$ are bounded functions  and $D^Nf$ is globally H\"older continuous with exponent $\beta$.
	For each $f$ in $\C_b^\gamma$, we denote
	\[
			[f]_\gamma=\sum\nolimits_{k=1}^N|D^kf|_\infty
         +\sup_{x,y\in K}\frac{|D^N f(x)-D^Nf(y)|}{|x-y|^\beta}
			\tand |f|_\gamma=|f|_\infty+[f]_\gamma.
	\]

\section{Snapshots of RDEs and RSDE}   
\label{sec:RSDE}
We briefly review the main results for rough (stochastic) differential equations that are closely relevant to our study. Our goal is to present simpler statements with more straightforward conditions while still achieving sufficiently general conclusions.

\subsection{Rough paths and its integration} \label{sec:RDE}

The purely deterministic theory of rough paths \cite{MR1654527} 
gives well-posedness to rough differential equations (RDEs) of the form
\begin{equation}\label{eq:RDE}
		dY_t=
		b_t (Y_t)dt + f_t(Y_t)d\BX_t 
		,\quad t\in[0,T].
\end{equation}
Here $\BX = (X,\XX) \in \mathscr{C}^\alpha_T$ is an $\alpha$-H\"older rough path, $\alpha \in (1/3,1/2]$.

The solution $Y=Y^\BX$ is an example of a {\em controlled rough path} \cite{MR2091358}, or simply {\em controlled} path (w.r.t. $X$), in the sense that it looks like $X$ on small scales: $Y_t \approx Y_s + Y_s' (X_t - X_s)$, with $Y'_s := f_s (Y_s)$. 
Crucially, the definition of $\int f (Y) d \BX$, and then integral meaning to \eqref{eq:RDE}, requires $f(Y)$ itself to be controlled. 
Many works on this subject, including \cite{FH20}, consider autonomous situation where $f_t(\cdot) \equiv f(\cdot)$, but there is no difficulty in assuming 
$f_t (\cdot) \approx f_s(\cdot) + f'_s (\cdot) (X_t -X_s)$ to accommodate (controlled) rough time dependence, in which case we write 
$$
dY_t=
		b_t (Y_t)dt + (f_t,f_t')(Y_t)d\BX_t .
$$
With $Y'_s = f_s (Y_s)$ and $Y''_s = ((Df_s) f_s +f'_s) (Y_s)$, one expects a Davie-type expansion
$$
         Y_t = Y_s + Y_s' (X_t - X_s) + Y''_s \XX_{s,t} + o(t-s).
$$ 
In quantitative form, cf. \cite{MR2387018,FH20}, this characterizes RDE solutions.
\begin{rem} \label{rem:nodimRDE}
We tacitly assume that $X$ and $Y$ take values in  $\mathbb{R}^{d_X}$ and $\mathbb{R}^{d_Y}$, respectively. For instance, a full specification of $\BX$ (``over $X$'') would then read $\Ca ( [0,T],\mathbb{R}^{d_X})$, but we prefer the short notation $\Ca_T$. Similarly, we avoid full specifications like
$b(t, . ): \mathbb{R}^{d_Y} \to \mathbb{R}^{d_Y}$ for (time-dependent) vector fields, or $f(t,. ): \mathbb{R}^{d_Y} \to \mathrm{Lin}(\mathbb{R}^{d_X}, \mathbb{R}^{d_Y})$. Indeed, explicit information about the dimensions only enter trivially in our arguments, e.g. in dealing with summations implicit in expressions like $Y_s' (X_t - X_s)$ or $Y''_s \XX_{s,t}$).
\end{rem}

 Important examples of rough paths come from the typical realization of a multidimensional Brownian motion enhanced with iterated (It\^o) integrals,
\[
	\BX = (X, \X) = \left(B(\omega), (\int \delta B \otimes d B) (\omega) \right)=: \mathbf{B}^{\text{It\^o}} (\omega).
\]
As is well-known, e.g. \cite[Ch.9]{FH20}, under natural conditions, $\bar{Y} (\omega) := Y^\BX|_{\BX = \mathbf{B}^{\text{It\^o}} (\omega)}$ yields a (beneficial\footnote{See e.g. \cite{FV10, FH20} for applications to large deviations, support theorems, H\"ormander theory etc, all of are evidence of the benefits of the RDE approach to SDEs.}
) version of the It\^o solution to $dY_t = b_t (Y_t) dt + \sigma_t (Y_t) dB_t$,
 the Stratonovich case is similar.

\subsection{Rough stochastic differential equations (RSDEs)}

Let $(\Omega,\cff,(\cff_t)_{t \in [0,T]},\mathbb{P})$ be a complete filtered probability space that supports a $\mathbb{R}^{d_B}$-dimensional Brownian motion $B$.
It was an open problem until \cite{FHL21} to provide a unified approach to SDEs and RDEs, such as to give intrinsic meaning and well-posedness to {\em rough stochastic differential equations} (RSDEs), aiming for an adapted solution process $Y=Y^{\BX} (\omega)$ to
\begin{equation}\label{eq:dRDE}
       d Y_t = b_t (Y_t)dt + \sigma_t ( Y_t) dB_t + (f_t,f_t')(Y_t) d \BX_t .
\end{equation}
The coefficients \(b,\sigma,f,f'\) are allowed to be progressively measurable.
As before, $\BX \in \mathscr{C}^\alpha_T$ with  $\alpha \in (1/3,1/2]$ and again (cf. \cref{rem:nodimRDE}) we need not be explicit about the dimensions. The aforementioned paper provides both a Davie expansion, which quantifies 
$$
\delta Y_{s, u} \approx \int_s^u b_t (Y_t) dt + \int_s^u \sigma_t (Y_t) dB_t
+ f_s (Y_s) \delta X_{s, u} + ((D f_s) f_s + f'_s) (Y_s) \mathbb{X}_{s, u},
$$ and a rough stochastic integration theory which gives intrinsic meaning and estimates for rough stochastic integrals, 
\begin{equation}  \label{eqn:rsi}
      \int_s^u (Z, Z') d \BX \approx Z_s \delta X_{s, u} + Z'_s\mathbb{X}_{s, u}
\end{equation} 
where $(Z,Z')$ is a so-called stochastic controlled rough path (see \cref{def:stochasticcontrolledroughpaths} below).
It is given by $Z=f(Y),Z' = ((D f) f + f')(Y)$ in the above example and provides integral meaning to \eqref{eq:dRDE}.
It is an important feature of that only $f$ needs to be Taylor-expanded, whereas $b,\sigma$ can have progressive $(t,\omega)$ dependence and only need to have Lipschitz spatial regularity, as in classical It\^o theory, to have well-posedness.
\begin{rem}
 
The progressive generality (in $b,\sigma)$ is crucial if one considers stochastic control problems. A  commonly used trick is to regard \eqref{eq:dRDE} as RDE driven by a random rough path,``$(B,\BX)$'' $\in \mathscr{C}([0,T];\mathbb{R}^{d_B + d_X})$ obtained by a joint lift of $\mathbb{R}^{d_B}$-dimensional Brownian motion $B$  and $\BX \in \mathscr{C}([0,T];\mathbb{R}^{d_X})$, essentially by supplying the missing integrals $\int \delta B d B, \int \delta X dB$ by It\^o integration, $\int \delta B dX$ via integration by parts.
However, this method fails to treat such general setup: a progressive path-dependent coefficient field $\sigma (t,\omega)$, comes with no H\"older or $p$-variation regularity in $t$ whatsoever, leave alone controlled $t$-dependence, which prohibits the use of this joint lifting method.

\end{rem}

We now review some key material of \cite{FHL21}, to the extent needed later on.
Let \(p \in [2,\infty)\) and \(q \in [p,\infty]\), and let \(\mathfrak{G}\subseteq\cff\) be a sub-\(\sigma\)-field. 
Given a random variable \(\xi\), we define (if it exists) its conditional \(L^p\)-norm with respect to \(\mathfrak{G}\) as the (unique) \(\mathfrak{G}\)-measurable \(\mathbb{R}\)-valued random variable given by 
\[
    \|\xi|\mathfrak{G}\|_{p} := \mathbb{E}\left( |\xi|^p|\mathfrak{G}\right)^{\frac{1}{p}}.
\]
In particular, the mixed \(L_{p,q}\)-norm \(\|\|\xi|\mathfrak{G}\|_p\|_q\) denotes the \(L^q(\Omega)\)-norm of \(\|\xi|\mathfrak{G}\|_p\).
Let \(\kappa,\kappa' \in [0,1]\) and let \(X :[0,T]\to \R^{d_X}  \) be an \(\alpha\)-H\"older continuous path.

\begin{defi}[{\cite[Definition 3.1]{FHL21}}] \label{def:stochasticcontrolledroughpaths}  
A pair \((Z,Z')\) is called \textit{stochastic controlled rough path} of $(p,q)$-integrability and $(\kappa,\kappa')$-H\"older regularity if 
\begin{enumerate} \renewcommand{\labelenumi}{\alph{enumi})}
\item \(Z=(Z_t)_{t \in [0,T]}\) is progressively measurable such that
\[
    \|\delta Z\|_{\kappa;p,q} := \sup_{0 \leq s < t \leq T} \frac{\|\|\delta Z_{s,t}|\cff_s\|_p\|_q}{|t-s|^\kappa} < +\infty;
\]
\item \(Z'=(Z'_t)_{t \in [0,T]}\) is progressively measurable such that
\[
    \sup_{t \in [0,T]} \|Z'_t\|_q < +\infty \quad \text{and} \quad \|\delta Z'\|_{\kappa';p,q} := \sup_{0 \leq s < t \leq T} \frac{\|\|\delta Z'_{s,t}|\cff_s\|_p\|_q}{|t-s|^{\kappa'}} < +\infty;
\]
\item writing \(R^Z_{s,t} = \delta Z_{s,t} - Z'_s \delta X_{s,t}\) for \((s,t)\in\Delta_T\), we have
\[
    \|\mathbb{E}_\cdot R^Z\|_{\kappa+\kappa';q} := \sup_{0 \leq s < t \leq T} \frac{\|\mathbb{E}_s(R^Z_{s,t})\|_q}{|t-s|^{\kappa+\kappa'}} < + \infty.
\]
\end{enumerate}
We write \((Z,Z') \in \mathbf{D}_X^{\kappa,\kappa'} L_{p,q}\); and also \(\mathbf{D}_X^{2\kappa} L_{p,q}\), \(\mathbf{D}_X^{\kappa,\kappa'} L_p\) if \(\kappa = \kappa'\) and \(p=q\), respectively.
\end{defi}


A seminorm is then defined by 
\[
    \|(Z,Z')\|_{\mathbf{D}_X^{\kappa,\kappa'}L_{p,q}} :=  	
    \|\delta Z\|_{\kappa;p,q}     
     + \sup_{t \in [0,T]}\|Z'_t\|_q 
   + \|\delta Z'\|_{\kappa';p,q} + \|\mathbb{E}_\cdot R^Z\|_{\kappa+\kappa';q}.
\]
Given additionally \((\bar{Z},\bar{Z}') \in \mathbf{D}_{\bar{X}}^{\kappa,\kappa'} L_{p,q}\) for some \(\bar{X} \in C^\alpha([0,T];\mathbb{R}^{d_X})\), we define\footnote{Following \cite{FHL21} we only use the $L_p$-scale here, even if $\bar{Z},\bar{Z}'$ enjoy partially better integrability.}
\begin{align*}
    \|Z,Z';\bar{Z},\bar{Z}'\|_{X,\bar{X};\kappa,\kappa';p} &:= 
    \|\delta (Z-\bar{Z})\|_{\kappa;p,p}  
    +\| Z'-\bar{Z}'\|_{\kappa';p,p} + \|\mathbb{E}_\cdot R^Z - \mathbb{E}_\cdot \bar{R}^{\bar{Z}}\|_{\kappa,\kappa';p},
\end{align*} 
where \(\bar{R}^{\bar{Z}}_{s,t} = \delta \bar{Z}_{s,t} - \bar{Z}'_s \delta \bar{X}_{s,t}\).
If \(\bar{X} = X\), write \(\|\ \cdot \ ; \ \cdot \ \|_{X;\kappa,\kappa';p}\) instead of \(\|\ \cdot \ ; \ \cdot \|_{X,X;\kappa,\kappa';p}\).
\medskip
When \(X\) has a rough path lift \(\BX \in \CC^\alpha\), the integration of \((Z,Z')\) against \(\BX\) is a rough  stochastic integral defined rigorously in the following result, which is an excerpt from {\cite[Theorem 3.5]{FHL21}}.
\begin{proposition}\label{prop.rsint} 
   Suppose that \((Z,Z')\) belongs to \(\mathbf{D}_X^{\kappa,\kappa'} L_{p,q}\) with \(\alpha+\kappa>1/2\), \(\alpha+\min(\alpha,\kappa)+\kappa'>1\). Then the Riemann sums
   \begin{align*}
      \sum_{[u,v]\in\cpp, u\le t}Z_u \delta X_{u,v\wedge t}+Z'_u\XX_{u,v\wedge t}
   \end{align*}
   converge uniformly in time \(t\in[0,T]\) in \(L_p\), as \(|\cpp|\) goes to 0, to a continuous adapted process, denoted by \(\int_0^\cdot (Z,Z')d\BX\). 
\end{proposition}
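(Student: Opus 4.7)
The plan is to recognize the Riemann sums as sewing sums and apply the \emph{stochastic sewing lemma} (SSL) of L\^e, on which the entire framework of \cite{FHL21} is built. Set the germ
\[
    A_{s,t} := Z_s \delta X_{s,t} + Z'_s \XX_{s,t},\qquad (s,t)\in\Delta_T,
\]
which is $\cff_t$-adapted by progressive measurability of $(Z,Z')$. A direct computation using Chen's relation $\XX_{s,t} = \XX_{s,u} + \XX_{u,t} + \delta X_{s,u}\otimes \delta X_{u,t}$ yields, for $s\le u \le t$,
\[
    \delta A_{s,u,t} := A_{s,t} - A_{s,u} - A_{u,t} = -R^Z_{s,u}\,\delta X_{u,t} - \delta Z'_{s,u}\,\XX_{u,t}.
\]
This identity is the bridge between the stochastic controlled-rough-path structure of $(Z,Z')$ and the germ to which SSL will be applied.

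To invoke the SSL I need a ``non-adapted'' bound $\|\delta A_{s,u,t}\|_p \lesssim (t-s)^{\beta_1}$ with $\beta_1>1/2$, and an ``adapted'' bound $\|\E_s \delta A_{s,u,t}\|_p \lesssim (t-s)^{\beta_2}$ with $\beta_2>1$. For the non-adapted bound, Jensen together with $p\le q$ yields $\|R^Z_{s,u}\|_p \lesssim (u-s)^{\kappa\wedge \alpha}$ (via $R^Z=\delta Z - Z'\delta X$ and the a priori bounds (a), (b) of Definition \ref{def:stochasticcontrolledroughpaths}), so the two terms contribute respectively $(t-s)^{(\kappa\wedge\alpha)+\alpha}$ and $(t-s)^{\kappa'+2\alpha}$. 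Both exponents exceed $1/2$: the first by the hypothesis $\alpha+\kappa>1/2$ combined with $2\alpha>2/3$, the second by $\alpha>1/3$.

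For the adapted bound, since $\BX$ is deterministic I may factor $\delta X_{u,t}$ and $\XX_{u,t}$ out of $\E_s$ to get
\[
    \|\E_s \delta A_{s,u,t}\|_p \le |X|_\alpha (t-u)^\alpha \|\E_s R^Z_{s,u}\|_p + |\XX|_{2\alpha}(t-u)^{2\alpha} \|\E_s \delta Z'_{s,u}\|_p .
\]
By Jensen, $\|\E_s R^Z_{s,u}\|_p \le \|\E_s R^Z_{s,u}\|_q \lesssim (u-s)^{\kappa+\kappa'}$ through item (c) of Definition \ref{def:stochasticcontrolledroughpaths}, while $\|\E_s \delta Z'_{s,u}\|_p \le \|\delta Z'_{s,u}\|_p \lesssim (u-s)^{\kappa'}$. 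The two contributions are of orders $(t-s)^{\alpha+\kappa+\kappa'}$ and $(t-s)^{2\alpha+\kappa'}$, both strictly larger than $(t-s)$ under the hypothesis $\alpha+\min(\alpha,\kappa)+\kappa'>1$ (the second uses $2\alpha\ge \alpha+(\alpha\wedge\kappa)$). The SSL then delivers a unique continuous adapted process $t\mapsto \int_0^t(Z,Z')d\BX$ to which the Riemann sums converge uniformly in $t$ in $L^p$.

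The subtlety I expect to matter is the distinction between the mixed conditional norms $\|\,\cdot\,|\cff\|_{p,q}$ and plain $L^p$ norms in the two SSL hypotheses: item (c) of Definition \ref{def:stochasticcontrolledroughpaths} is exactly the sharper $\E_s$-level estimate on $R^Z$ needed to secure $\beta_2>1$, while (a), (b) suffice for the coarser non-adapted bound. This matching of integrability classes is precisely what allows well-posedness under the \emph{asymmetric} exponent condition $\alpha+\min(\alpha,\kappa)+\kappa'>1$ rather than the stronger $\alpha+\kappa+\kappa'>1$ that a naive (deterministic) sewing argument would demand.
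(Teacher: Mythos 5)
Your argument is correct and reproduces exactly the approach of \cite{FHL21} (to which the paper delegates the proof of \cref{prop.rsint}): identify the germ $A_{s,t}=Z_s\,\delta X_{s,t}+Z'_s\,\mathbb{X}_{s,t}$, compute $\delta A_{s,u,t}=-R^Z_{s,u}\,\delta X_{u,t}-\delta Z'_{s,u}\,\mathbb{X}_{u,t}$ via Chen's relation, and apply the stochastic sewing lemma, with item (c) of \cref{def:stochasticcontrolledroughpaths} feeding the $\mathbb{E}_s$-level estimate and items (a), (b) feeding the unconditional one. The exponent checks are right: the non-adapted bound gives $(\kappa\wedge\alpha)+\alpha$ and $\kappa'+2\alpha$, both $>1/2$; the adapted bound gives $\alpha+\kappa+\kappa'$ and $2\alpha+\kappa'$, and $\alpha+\min(\alpha,\kappa)+\kappa'>1$ is precisely what makes both exceed $1$.

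Two small remarks. First, uniform-in-$t$ $L^p$ convergence does not follow from the pointwise SSL alone; it requires the maximal form of the stochastic sewing lemma (which \cite{FHL21} uses), so you should say that explicitly rather than attribute it to "the SSL" generically. Second, your closing commentary mislabels the deterministic baseline: a naive (pathwise) sewing would need the \emph{non-adapted} exponents to exceed $1$, i.e.\ $\alpha+\min(\alpha,\kappa)>1$, which is far more demanding than $\alpha+\kappa+\kappa'>1$ and in fact impossible for $\alpha\le 1/2$ with $\kappa\ge\alpha$. The gain from stochastic sewing is precisely that the $R^Z\,\delta X$ term is allowed to have order only slightly above $1/2$ in $L^p$, with the cancellation in $\mathbb{E}_sR^Z_{s,u}$ of order $\kappa+\kappa'$ restoring the sewing threshold at the adapted level. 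This is commentary only and does not affect the validity of the proof.
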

We now define integrable solutions to  \eqref{eq:dRDE}.
\begin{defi}[{\cite[Definition 4.2]{FHL21}}] \label{def:solutionRSDEs_compendium}
 An $L_{p,q}$-integrable solution of \eqref{eq:dRDE} on $[0,T]$ is a continuous adapted process $Y$ such that the following conditions are satisfied:
     \begin{enumerate}\renewcommand{\labelenumi}{\alph{enumi})}
        \item $\int_0^T |b_r(Y_r)|dr$ and $\int_0^T |(\sigma \sigma^\top)_r(Y_r)|dr$ are finite $\mathbb{P}$-a.s.;
        \item $(f(Y),D_y f(Y)(f(Y))+f'(Y))$ belongs to $\mathbf{D}_X^{\bar\alpha,\bar\alpha'}L_{p,q}$,
        for some $\bar\alpha,\bar\alpha'\in(0,1]$ such that $\alpha+\min\{\alpha,\bar\alpha\}>\frac12$ and $\alpha+\min\{\alpha,\bar\alpha\}+\bar\alpha'>1$; 
        \item $Y$ satisfies the following stochastic Davie-type expansion for every $(s,t)\in \Delta_T$: \begin{equation*} \begin{aligned}
            \delta Y_{s,t} &=  \int_s^t b_r(Y_r)dr + \int_s^t \sigma_r(Y_r)dB_r  \\
            & \quad + f_s(Y_s) \delta X_{s,t} + (D_y f_s(Y_s)(f_s(Y_s))+f'_s(Y_s))\mathbb{X}_{s,t} + Y^\natural_{s,t},
        \end{aligned} 
        \end{equation*} where 
        \begin{equation*}
           \|\|Y^\natural_{s,t}|\cff_s\|_p\|_q=o(|t-s|^\frac{1}{2}) \qquad \text{and} \qquad \|\mathbb{E}_s(Y^\natural_{s,t})\|_q=o(|t-s|).
        \end{equation*}
     \end{enumerate}
     When the starting position $Y_0=\xi$ is specified, we say that $Y$ is a solution starting from $\xi$. 
       Furthermore, by \cite[Proposition 4.3]{FHL21}, c) can be replaced by \begin{itemize}
           \item[c')] $\mathbb{P}$-almost surely and for any $t \in [0,T]$, \begin{equation*}
               Y_t = \xi + \int_0^t b_r(Y_r) dr + \int_0^t \sigma_r(Y_r) dB_r + \int_0^t (f_r(Y_r),D_y f_r(Y_r)(f_r(Y_r)) + f'_r(Y_r)) d\mathbf{X}_r. 
           \end{equation*} 
       \end{itemize}
 \end{defi}

To state the well-posedness result for \eqref{eq:dRDE}, we recall some terminologies related to the regularity of vector fields.
\begin{definition}[ {\cite[Definition 4.1]{FHL21}}] \label{def:stochasticboundedandLipschitzvectorfield}
   Let $V,\bar V$ be some finite dimensional Euclidean spaces and fix a Borel set \( S\subset V \). Let \( (t,\omega)\mapsto g_t(\omega,\cdot) \) be a progressively measurable stochastic process from \( \Omega\times [0,T] \to \Cb(S;\bar V)\).
We say that:
   \begin{enumerate}[label=(\alph*)]
      \item $g$ is \textit{random bounded continuous} if 
      is uniformly bounded, namely, there exists a deterministic constant $\|g\|_\infty$ such that
         \[
            \sup_{t\in[0,T]}\esssup_{\omega\in \Omega}\sup_{x\in S}|g_t(\omega,x)|\le\|g\|_\infty.
         \]
      \item $g$ is \textit{random bounded Lipschitz} if it is random bounded continuous, progressively measurable from \( \Omega\times[0,T]\to \Cb^1 (S;\bar V) \) and uniformly bounded in the sense that
         \[
            \sup_{t\in[0,T]}\esssup_{\omega\in \Omega}\sup_{x,\bar x\in S}\frac{|g_t(\omega,x)-g_t(\omega,\bar x)|}{|x-\bar x|}\le \|g\|_\lip
         \]
         for some constant \( \|g\|_{\lip} \).
   \end{enumerate}
\end{definition}


\begin{definition}[Stochastic controlled vector fields, {\cite[Definition 3.8]{FHL21}}]\label{def.scvec}
   Let $\beta, \beta' \in (0,1]$ and $\gamma >1$ be some fixed parameters.
   We call $(f,f')$ {\em stochastic controlled vector field on \( \R^d \)} and write \( (f,f')\in \D^{\beta,\beta'}_XL_{p,q}\C^\gamma_{b}\)
   if the following conditions are satisfied.
   \begin{enumerate}[label=(\alph*)]
      \item\label{scvec.f}
      The pair
      \[
      (f,f')\colon\Omega \times  [0,T] \to \C^\gamma_{b}(\R^{d}; \R^m) \times \C^{\gamma-1}_{b} (\R^d ; \mathrm{Lin}(\R^{d_X},\R^m))
      \]
      is  progressively measurable in the strong sense and uniformly $n$-integrable in the sense that
      \begin{align}\label{def.normff}
         \|(f,f')\|_{\gamma;q}:=\sup_{s\in [0,T]}\||f_s|_{\gamma}\|_q +  \sup_{s\in [0,T]}\||f'_s|_{\gamma-1}\|_q 
      \end{align}
      is finite.

      \item\label{scvec.brakets}
      Letting
      \begin{align}\label{def.bk}
         \bk{Z}_{\zeta;p,q}:=\sup_{(s,t)\in \Delta_T:s\neq t}\frac{\Big\|\big\|\sup_{y\in \Rd}|Z_{s,t}(y)|\,\big|\,\cff_s \big\|_p\Big\|_q}{(t-s)^\zeta}\,,
      \end{align}
      the quantities
      $\bk{\delta f}_{\beta;p,q}$, $\bk{\delta f'}_{\beta';p,q}$, \( \bk{\delta Df}_{\beta';p,q} \) are finite.%

      \item\label{scvec.remainder} The map $(s,t)\mapsto \E_s R^f_{s,t}=\E_sf_t-f_s-f'_s \delta X_{s,t}$ satisfies 
      \[
      \bk{\E_\bigcdot R^f}_{\beta+\beta';q}
      =\sup_{(s,t)\in \Delta_T:s\neq t}\frac{\|\sup_{y\in \Rd}|\E_sR^{f}_{s,t}(y)|\|_q}{(t-s)^{\beta+\beta'}}
      <\infty\,.
      \]
   \end{enumerate}
\end{definition}
For stochastic controlled vector fields as above, we introduce the quantity
\begin{equation}
   \label{def.norms_scvf}
   \bk{(f,f')}_{X;\beta,\beta';p,q}
      :=\bk{\delta f}_{\beta;p,q}+\bk{\delta Df}_{\beta';p,q}+\bk{\delta f'}_{\beta';p,q}+\bk{\E_\bigcdot R^f}_{\beta+\beta';p,q},
\end{equation}
which is abbreviated as $\bk{(f,f')}_{X;\beta,\beta';p}$ if $p=q$, as $\bk{(f,f')}_{X;2\beta;p,q}$ if \( \beta=\beta' \) and as $\bk{(f,f')}_{X;2\beta;p}$ when both conditions are met.
Furthermore, if $(\bar f,\bar f')\in \D^{\beta,\beta'}_{\bar X}L_{p,\infty}\C^\gamma_b$ for another such \(\bar X\in C^\alpha(V) \), we define 
\begin{equation}
\label{def.bk_metric}
\begin{aligned}
   \bk{f,f';\bar f,\bar f'}_{X,\bar X;\beta, \beta';p}&=\bk{\delta f-\delta \bar f}_{\beta;p,p}+\bk{\delta f'-\delta \bar f'}_{\beta';p,p}+\bk{\delta Df-\delta D\bar f}_{\beta';p,p}
   \\&\quad+\bk{\E_\bigcdot R^f-\E_\bigcdot \bar R^{\bar f}}_{\beta+\beta';p}.
\end{aligned}
\end{equation}

\begin{definition}[Controlled vector fields]
In the case of deterministic vector fields, stochastic controlled vector fields reduce to controlled vector fields and we write \(\D^{\beta,\beta'}_X\C^\gamma_{b}\) instead of \(\D^{\beta,\beta'}_XL_{p,q}\C^\gamma_{b}\). In such cases, we will also drop the indices \(p,q\) in the notations in \eqref{def.normff}-\eqref{def.bk_metric}.
\end{definition} 

The following results on well-posedness of \eqref{eq:dRDE} are consequences of more general ones from \cite{FHL21}. 

\begin{thm}[see  {\cite[Theorem 4.6 and Proposition 4.5]{FHL21}}] \label{thm:wellposed}
Let $\BX \in \mathscr{C}^\alpha, \alpha \in (1/3,1/2]$ and $p \in [2,\infty)$.
    Assume\footnote{For simplicity, take \(\beta=\alpha\) at first reading.}  that $\beta \in [0,\alpha]$ and $\gamma \in \left(\frac{1}{\alpha},3 \right]$ such that $\alpha + (\gamma-1)\beta > 1 $. Let $b$ and $\sigma$ be random bounded Lipschitz vector fields.
    Let $(f, f')$ be a stochastic controlled vector field in \(\mathbf{D}_X^{2\beta} L_{p,\infty} \C_b^\gamma\) such that \((D_yf,D_yf')\) belongs to \(\mathbf{D}_X^{2\beta} L_{p,\infty} \C_b^{\gamma-1}\).
    Then, for any $\xi \in L^p(\Omega,\cff_0;\mathbb{R}^d)$, there exists a unique \(L_{p,\infty}\)- integrable solution $Y$ to \eqref{eq:dRDE}.
    Moreover, 
    \begin{align}\label{est.apri.m}
      \|(Y,Y')\|_{\mathbf{D}_X^{\alpha,\beta}L_{p,\infty}} \lesssim_{T,p,\alpha,\beta,\gamma} \left(1+\|b\|_\infty + \|\sigma\|_\infty + M \nn{\BX}_\alpha \right)^{\frac{1}{\beta}},
    \end{align}
    where \(M>0\) is any constant such that \(\| (f, f') \|_{\gamma-1;\infty} + \llbracket(f, f') \rrbracket_{X;2\beta; p,\infty} \leq M\). 
\end{thm}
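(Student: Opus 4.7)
The plan is a Banach fixed-point argument in a closed ball of stochastic controlled rough paths $\mathbf{D}_X^{\alpha,\beta}L_{p,\infty}$. For a candidate $(Y,Y')$ with $Y'_s=f_s(Y_s)$, define $\Phi(Y,Y') = (\widetilde Y,\widetilde Y')$ by $\widetilde Y'_s := f_s(Y_s)$ and
\begin{equation*}
\widetilde Y_t := \xi + \int_0^t b_r(Y_r)\,dr + \int_0^t \sigma_r(Y_r)\,dB_r + \int_0^t \bigl(f_r(Y_r),\,(D_y f_r\cdot f_r+f'_r)(Y_r)\bigr)\,d\BX_r,
\end{equation*}
where the last integral is the rough stochastic integral of \cref{prop.rsint}. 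Fixed points of $\Phi$ are exactly the $L_{p,\infty}$-integrable solutions in the sense of \cref{def:solutionRSDEs_compendium}.

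The heart of the argument is a composition lemma: if $(Y,Y')\in \mathbf{D}_X^{\alpha,\beta}L_{p,\infty}$ and $(f,f')\in \D^{2\beta}_X L_{p,\infty}\C^\gamma_b$ with $(D_yf,D_yf')\in \D^{2\beta}_X L_{p,\infty}\C^{\gamma-1}_b$, then $(f(Y),(D_yf)(Y)\cdot f(Y)+f'(Y))$ is itself a stochastic controlled rough path, with norm growing at most affinely in $\|(Y,Y')\|_{\mathbf{D}_X^{\alpha,\beta}L_{p,\infty}}$ and linearly in $M$. The crucial input is a conditional Taylor estimate in which the remainder $\mathbb{E}_s R^{f(Y)}_{s,t}$ decays as $|t-s|^{\alpha+(\gamma-1)\beta}$, which is precisely why the parameter constraint $\alpha+(\gamma-1)\beta>1$ appears. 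Combining this with \cref{prop.rsint} for the rough stochastic integral and Burkholder--Davis--Gundy for the It\^o term yields a bound of the form $\|\Phi(Y,Y')\|_{\mathbf{D}_X^{\alpha,\beta}L_{p,\infty}[0,T_1]} \le C_0 + C_1 T_1^{\varepsilon}\|(Y,Y')\|_{\mathbf{D}_X^{\alpha,\beta}L_{p,\infty}[0,T_1]}$ for some $\varepsilon>0$ coming from the slack in the conditions $\alpha+\beta>1/2$ and $\alpha+(\gamma-1)\beta>1$. A parallel difference estimate, using the Lipschitz regularity of $b,\sigma$ and a linearised composition bound, yields contraction on $[0,T_1]$ with $T_1 \asymp (1+\|b\|_\infty+\|\sigma\|_\infty+M\nn{\BX}_\alpha)^{-1/\beta}$.

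Local existence, uniqueness, and a local $\mathbf{D}_X^{\alpha,\beta}L_{p,\infty}$-bound on $[0,T_1]$ follow. One restarts the fixed-point iteration on $[T_1,2T_1]$ with initial datum $Y_{T_1}\in L_p(\cff_{T_1})$; since the interval length $T_1$ depends only on the fixed data $\|b\|_\infty,\|\sigma\|_\infty,M,\nn{\BX}_\alpha$ (and not on the initial condition), $\sim T/T_1$ iterations globalise the solution. Patching the local norms and using the $L_p$-bound on $Y$ to control the jumps between intervals produces the a priori estimate~\eqref{est.apri.m}, whose exponent $1/\beta$ is dictated precisely by the scaling $T_1^{\beta}\cdot M\nn{\BX}_\alpha\lesssim 1$.

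The hard part will be the control of the conditional-expectation remainder $\|\mathbb{E}_\cdot R^{f(Y)}\|_{\alpha+\beta;q}$ in the composition lemma. In deterministic rough path theory one bounds $R^{f(Y)}$ pointwise via Taylor expansion, but here $\delta Y$ contains a martingale piece $\int \sigma\,dB$ of order $|t-s|^{1/2}$ which is \emph{not} controlled pointwise at rate $\alpha$: one must separate the $\cff_s$-conditionally mean-zero contribution (handled after the fact by the stochastic sewing lemma, exploiting BDG to trade one Taylor order for a $\sqrt{t-s}$) from the predictable part, which is expanded to one extra order using the assumption that $(D_yf, D_yf')$ is itself a stochastic controlled vector field. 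This is precisely the mechanism through which the stochastic sewing lemma underpinning \cref{prop.rsint} does its essential work.
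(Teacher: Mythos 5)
The paper itself offers no proof of this statement: it is quoted directly from \cite{FHL21} (Theorem 4.6 and Proposition 4.5), and the surrounding text explicitly labels it a ``consequence of more general ones from \cite{FHL21}.'' So the comparison is necessarily against that cited source rather than against an argument in this paper. Your sketch is a faithful high-level reconstruction of the contraction scheme there: fixed point of the Picard map $\Phi$ in a ball of $\mathbf{D}_X^{\alpha,\beta}L_{p,\infty}$, a composition lemma showing $(f(Y),(D_yf)f(Y)+f'(Y))$ is again a stochastic controlled rough path with norm affine in $\|(Y,Y')\|$, and contraction on an interval of length $T_1\asymp(1+\|b\|_\infty+\|\sigma\|_\infty+M\nn{\BX}_\alpha)^{-1/\beta}$ followed by iteration, which is exactly what produces the exponent $1/\beta$ in \eqref{est.apri.m}. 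The paragraph on the conditional remainder is also the right diagnosis: $\delta Y$ contains a martingale piece of size $\sqrt{t-s}$, so a pointwise Taylor bound is off by half an order, and the missing order is recovered by pushing the martingale contribution through $\mathbb{E}_s(\cdot)$ and BDG; this is precisely the stochastic sewing mechanism of \cite{FHL21} and the reason the hypothesis on $(D_yf,D_yf')$ is needed.

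Two points you should tighten if this were written out in full. First, ``fixed points of $\Phi$ are exactly the $L_{p,\infty}$-solutions'' is not a tautology but a statement of equivalence between the Davie expansion (c) and the integral equation (c') in \cref{def:solutionRSDEs_compendium}; this is Proposition 4.3 of \cite{FHL21} and must be invoked. Second, the globalisation step requires more than ``patching'': the quantities $\|\delta Z\|_{\kappa;p,q}$ and $\|\mathbb{E}_\cdot R^Z\|_{\kappa+\kappa';q}$ are defined via a supremum over all $(s,t)\in\Delta_T$, not just over pairs within a single subinterval $[kT_1,(k+1)T_1]$, and these seminorms do not trivially add across the gluing points. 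One must interpolate across the concatenation using the conditional-moment structure and the uniform $L_p$ bound on $Y$, which is where the precise polynomial dependence in \eqref{est.apri.m} is actually earned rather than merely asserted.
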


\begin{thm}[see {\cite[Theorem 4.9]{FHL21}}] \label{thm:stabilityforRSDEs_compendium}
     
    In the setting of \cref{thm:wellposed}, let \(\bar{\mathbf{X}} = (\bar{X}, \bar{\mathbb{X}}) \in \mathscr{C}^\alpha([0,T]; \mathbb{R}^{d_X})\), let \(\bar{b}\) and \(\bar{\sigma}\) be two random bounded continuous vector fields 
    and let \((\bar{f}, \bar{f'}) \in \mathbf{D}_{\bar{X}}^{2\beta} L_{p,\infty} \C_b^\gamma\).
    Denote by \(\bar{Y} = (\bar{Y}_t)_{t \in [0,T]}\) the solution of the rough SDE starting from \(\bar{\xi} \in L^p(\Omega,\cff_0;\mathbb{R}^d)\) and driven by \(\bar{\mathbf{X}}\), with coefficients \(\bar{b}, \bar{\sigma}\) and \((\bar{f}, \bar{f'})\). Then 
    \[
    \begin{aligned}
      \|\sup_{t\in[0,T]}| Y_{t}- \bar Y_{t}|\|_p&+ \|Y, f(Y); \bar{Y}, \bar{f}(\bar{Y})\|_{X, \bar{X}; \alpha, \beta; p} \\
      &\lesssim_{M, \alpha, \beta, p, T} \|\xi - \bar{\xi}\|_p + \rho_\alpha(\mathbf{X}, \bar{\mathbf{X}}) 
      \\&\quad+ \sup_{t \in [0,T]} \| |b_t(\cdot) - \bar{b}_t(\cdot)|_\infty \|_p 
      +\sup_{t \in [0,T]} \| |\sigma_t(\cdot) - \bar{\sigma}_t(\cdot)|_\infty \|_p
      \\&\quad + \|(f - \bar{f}, f' - \bar{f'}) \|_{\gamma-1; p} 
       + \llbracket f, f'; \bar{f}, \bar{f}' \rrbracket_{X, \bar{X}; 2\beta; p},
    \end{aligned}
    \]
    where \(M > 0\) is any constant such that
\begin{multline*}
   \nn{\BX}_\alpha + \nn{\bar \BX}_\alpha + \|b\|_{\lip} + \|\sigma\|_{\lip} 
   \\+ \| (f, f') \|_{\gamma; \infty} + \llbracket (f, f') \rrbracket_{X; 2\beta; p; \infty}+\bk{(Df,Df')}_{X;2\beta;p,\infty} \leq M.
\end{multline*}
\end{thm}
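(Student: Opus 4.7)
The plan is to reduce the stability bound to an application of the stochastic sewing lemma (SSL) applied to the remainder of the difference $Y - \bar Y$, using the a priori estimate of \cref{thm:wellposed} to keep all the relevant controlled-path norms of $(Y,Y')$ and $(\bar Y,\bar Y')$ uniformly bounded by a constant depending only on $M$. Throughout, quantities will be estimated in the conditional $L_{p,q}$-norms of \cref{def:stochasticcontrolledroughpaths}, with $q=\infty$ on the a priori side and $q=p$ on the stability side.

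First, I would write the Davie-type expansions from \cref{def:solutionRSDEs_compendium}~c) for both $Y$ and $\bar Y$, subtract them, and regroup the terms into five contributions: (i) the drift difference $\int_s^t (b_r(Y_r)-\bar b_r(\bar Y_r))dr$, (ii) the diffusion difference $\int_s^t (\sigma_r(Y_r)-\bar\sigma_r(\bar Y_r))dB_r$, (iii) the first-order rough term $f_s(Y_s)\delta X_{s,t} - \bar f_s(\bar Y_s)\delta\bar X_{s,t}$, (iv) the second-order rough term with $\XX_{s,t}$ vs $\bar\XX_{s,t}$, and (v) the remainder $Y^\natural_{s,t}-\bar Y^\natural_{s,t}$. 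Terms (i)--(ii) are handled by standard Lipschitz estimates on $b,\sigma$ plus conditional BDG, producing factors of $\sup_t\||b_t-\bar b_t|_\infty\|_p$, $\sup_t\||\sigma_t-\bar\sigma_t|_\infty\|_p$, and $\|\sup_{r\le t}|Y_r-\bar Y_r|\|_p$. Terms (iii)--(iv) are handled by inserting and subtracting mixed quantities like $\bar f_s(Y_s)\delta X_{s,t}$ and $\bar f_s(\bar Y_s)\delta X_{s,t}$; the resulting pieces break into ``data distance'' terms $\|(f-\bar f,f'-\bar f')\|_{\gamma-1;p}+\rho_\alpha(\BX,\bar\BX)$ and ``solution distance'' terms that feed back into $\|Y,f(Y);\bar Y,\bar f(\bar Y)\|_{X,\bar X;\alpha,\beta;p}$.

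The heart of the argument is to verify that $\Xi_{s,t}:=Y^\natural_{s,t}-\bar Y^\natural_{s,t}$ satisfies the two-norm SSL hypotheses, $\|\|\Xi_{s,t}|\cff_s\|_p\|_p = o(|t-s|^{1/2})$ and $\|\E_s\Xi_{s,t}\|_p = o(|t-s|)$, with quantitative constants given by the data distances plus a linear factor of the not-yet-controlled quantity
\[
   \Psi := \|\sup_{r}|Y_r-\bar Y_r|\|_p + \|Y,f(Y);\bar Y,\bar f(\bar Y)\|_{X,\bar X;\alpha,\beta;p}.
\]
This reproduces, at the level of the difference, the same ``controlled vector field composed with a controlled rough path'' structure used to solve a single RSDE in \cref{thm:wellposed}; the needed composition lemma is the stability counterpart of the one yielding $(f(Y),(Df)f(Y)+f'(Y))\in\mathbf{D}_X^{\bar\alpha,\bar\alpha'}L_{p,q}$, and its use here requires that $(Df,Df')$ is itself a stochastic controlled vector field, which is exactly the reason the assumption $\bk{(Df,Df')}_{X;2\beta;p,\infty}\le M$ is included in the hypothesis.

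The main obstacle is the bookkeeping caused by $f$ and $\bar f$ being controlled with respect to \emph{different} rough paths $X$ and $\bar X$: the natural ``difference'' norm lives in the mixed space defined in \eqref{def.bk_metric}, and composition with $Y$ versus $\bar Y$ must be handled by telescoping through intermediate controlled paths (replacing $\bar f(\bar Y)$ by $\bar f(Y)$ and then by $f(Y)$), each step producing a Lipschitz factor from $|f|_\gamma,|\bar f|_\gamma\le M$ and a contribution to either $\Psi$ or the right-hand side data. Once the SSL delivers a bound of the form $\Psi \lesssim_M (\text{data distances}) + \varepsilon\,\Psi$ on sufficiently short subintervals (with $\varepsilon$ depending on the mesh and tending to $0$), a standard buckling / covering argument on $[0,T]$ closes the estimate and yields the claimed inequality. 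At this level of detail the statement is a specialization of \cite[Theorem~4.9]{FHL21}, so the proof proper reduces to checking that the present hypotheses imply those of the cited theorem and then quoting the estimate.
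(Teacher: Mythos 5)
The paper offers no proof of this theorem at all: it is stated verbatim as an import from \cite[Theorem~4.9]{FHL21}, and the text surrounding it explicitly frames it as one of the ``consequences of more general ones from \cite{FHL21}.'' So there is nothing in the paper to compare your argument against line by line. You correctly recognize this in your closing sentence, and that sentence alone is what the paper relies on. The preceding sketch is a plausible reconstruction of the FHL21 argument (difference of Davie expansions, stochastic sewing for $Y^\natural-\bar Y^\natural$, telescoping of $\bar f(\bar Y)\to\bar f(Y)\to f(Y)$ through the mixed $X,\bar X$-controlled spaces, buckling on short intervals), and you have correctly spotted why the hypothesis $\bk{(Df,Df')}_{X;2\beta;p,\infty}\le M$ is needed to make the composition step close. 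But since the paper's ``proof'' is nothing but a citation, the honest answer is that your proposal goes far beyond what the paper does: the only thing the paper uses, and the only thing you actually need to justify here, is the verification that the present hypotheses ($\bar b,\bar\sigma$ random bounded continuous, $(\bar f,\bar f')\in\mathbf{D}_{\bar X}^{2\beta}L_{p,\infty}\C_b^\gamma$, the $M$-bound including the $Df$-term) are exactly those of the cited theorem. Your sketch is not wrong, but it is supporting scaffolding rather than the content of the statement as used in this paper.
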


\section{Measurable selection for RSDEs} 
\label{sec:measurable_prelim}

 When solving RSDEs, the rough path is fixed and plays the role of a deterministic parameter. 
In the randomization step, one needs to substitute the rough path by a random process which requires certain measurability properties of the solutions to RSDEs with respect to the rough path input.
We provide answers to these measurability issues in this section, building upon the  
classical results from Stricker--Yor 
\cite{SY78}, which we revise for the reader's convenience in the appendix.

Let \((U,\uuu)\) be a measurable space and \((M,\mmm)\) be a Polish measurable space. In later sections, the abstract measurable parameter space $(U, \uuu)$ 
is taken as the rough path space $\MC^{\alpha}$, equipped with its Borel sets, $(M,\mmm)$ is often just $(\R^d , \bbb^d)$. 
\begin{defi} \label{def:optional} Let $(\Omega, \cff, (\cff_t)_{t \geq 0}, \P)$ be a filtered probability space satisfying the usual conditions. 
A measurable map     
$$
Y:( [0,T] \times \Omega \times U , \bbb_T  \otimes \cff \otimes  \uuu ) \to (M,\mmm) , \quad (t,\omega,u) \mapsto Y^u(t,\omega)
$$
is called {\em $\uuu$-optional} 
 if it is $\ooo \otimes \uuu  /  \mmm$-measurable where $\ooo$ denotes the optional $\sigma$-field on $[0,T] \times \Omega$. We say that $Y$ has a {\em $\uuu$-optional version} if there exists  $ ^{\circ}Y$
 which is $\uuu$-optional and {\em indistinguishable} from  $Y$, in the sense that, for all $u \in U$, the processes $ ^{\circ} Y^u$ and $Y^u$ are indistinguishable. 
\end{defi}

Thanks to Theorem \ref{thm:wellposed}, under the conditions stated there, there exists a unique \(L_{p,\infty}\)-integrable 
 solution on $[0,T]$ to
\footnote{As earlier, $(f_r, f_r') (Y_r ; \BX) = (f_r (Y_r ; \BX), ((D_y f_r)
f_r) (Y_r ; \BX) + f_r' (Y_r ; \BX ) )$.}

\be\label{rand-rsde}
        Y_t = \xi  + \int_0^t b_r(Y_r; \BX) \, dr + \int_0^t \sigma_r(Y_r; \BX)\, dB_r + \int_0^t \Big(f_r, 
         f'_r \Big)(Y_r; \BX) d\mathbf{X}_r, 
\ee
 for any fixed rough path $\BX \in \Ca_T = \mathscr{C}^{\alpha}([0,T])$. To show that the solution is measurable with respect to \(\BX\), we allow  $\xi$ and $g \in \{ b, \sigma, f,f' \}$ to 
  depend on $\BX$ in a measurable way. Specifically, we assume that $g=g(t,\omega,y,\BX)$ to be $\bbb^{d_Y} \otimes \BCa_T$-optional
  in the above sense with $U = \R^{d_Y} \times \mathscr{C}_T^{\alpha}$, equipped with the product $\sigma$-field $ \bbb^{d_Y} \otimes \BCa_T $. (All assumptions on $g=g(\cdot,\cdot,\cdot,\BX)$ for fixed $\BX$ imposed by Theorem \ref{thm:wellposed} remain.) These conditions are stated precisely in \cref{assum-jt-measu} below. Note that, at this stage, we do not assume that $g$ is causal in $\BX$.
 

\begin{exam} 
Consider a control $\eta = \eta (t,\omega,\BX)$ which is $\BCa_T$-optional, with values in some Polish measurable space $(A,\aaa)$,
and measurable  fields $g \in \{ b, \sigma, f, f' \}$, with $  g(t,y,a)$ defined on $[0,T] \times \R^{d_Y} \times A$.
Then $g(t,\ome,y;\BX):= g(t,y,\eta(t,\ome,\BX))$ is $\bbb^{d_Y} \otimes \BCa_T$-optional.

\end{exam} 



\begin{assumption}\label{assum-jt-measu}  Assume that $\xi:\Omega   \times \Ca_T \rightarrow \R^{d_Y}$ is $\MF_0 \otimes \BCa_T/\bbb^{d_Y}$-measurable, and for any $\BX \in \Ca_T,$ assume $\xi(\cdot,\BX) \in L_p$. Assume that $g\in \{b,\sigma,f,f'\}$ defined on $[0,T] \times \Omega \times \R^{d_Y} \times \Ca_T$ is $\bbb^{d_Y} \otimes \BCa_T$-optional. Moreover, we assume that the following conditions hold.

\begin{itemize}

	\item[$(1)$.] $(b,\sigma)$ is bounded and uniformly Lipschitz in $y$ in the sense that for any $y_1, y_2 \in \R^{d_Y},$ $g\in \{b,\sigma\},$
	$$
	\sup_{(t,\BX)\in [0,T]\times \Ca_T}\esssup_{\omega\in \Omega} |g(t,\omega,y_1;\BX )- g(t,\omega,y_2; \BX )| \le \|g\|_\lip |y_1-y_2|
	$$
   for some positive constant \(\|g\|_\lip\).

\item[$(2)$.] For any $\BX \in \Ca_T$, $(f,f') (\cdot \ ;\BX) $ belongs to $ \BD_X^{2\beta }L_{p,\infty} \C^{\gamma}_b$ for some $p \ge 2,$ $\beta \in (0,\alpha]$, $\gamma > \frac{1}{\alpha}$ such that $\alpha+\beta>\frac12$ and $\alpha + (\gamma-1)\beta>1.$
\end{itemize}	
\end{assumption}

\begin{exam}
We give examples of $(f,f')$ which satisfies  \cref{assum-jt-measu}, (2). 
\begin{enumerate}[(i)]
   \item If there is a finite constant \( C\) so that additionally \(f\)  satisfies 
   \begin{align*}
      \esssup_{\omega}\sup _{y}|f(t,\omega,y)-f(s,\omega,y)|\le C(t-s)^{2 \beta} 
   \end{align*}
   for every  \((s,t)\in \Delta_T\), then \((f,f'):=(f,0)\)  belongs to $ \BD_X^{2\beta }L_{p,\infty} \C^{\gamma}_b$.
	\item Let $h:  \R^{d_Y} \times \R^{d_X} \rightarrow \R^d$ be in $\C^3_b$. Then $$(t,\ome,y,\BX) \mapsto (f,f')(t,\ome,y;\BX) := ( h(y ;X_t), D_x h(y; X_t) ) $$ belongs to $ \BD^{2\alpha}_X \C^3_b$.
	\item Let $\ell : \R^{d_B} \rightarrow \R$ be in $\C^{3}_b.$ Then $(t,\ome,y,\BX) \mapsto (f,f')(t,\ome,y;\BX):= (\ell(B_t(\ome)), 0) $ belongs to $ \BD_X^{1/2,1/2} L_{2,\infty} \C_b^{\gamma}$ (equivalently in our notation, \(\BD_X^{1} L_{2,\infty} \C_b^{\gamma}\))  for any \(\gamma>0\). 
\end{enumerate}

\end{exam}

The following result is the main result of the current section, which shows that the solution to \eqref{rand-rsde} has a \(\BCa_T\)-optional version. 
\begin{thm}[Measurable selection] \label{thm-rsde-optional} Suppose that  \cref{assum-jt-measu} holds. For each \(\BX\in \CC_T\), let $Y=Y  (t,\omega; \BX)$ be the solution to \eqref{rand-rsde} on $[0,T]$ which is supplied by \cref{thm:wellposed}. Then $Y$ has a
$\BCa_T$-optional version (denoted by $\tY$) which is $\P$-a.s. continuous for any fixed $\BX \in \MC_T$. 
In particular, $\tY_T$ is $ \cff_T \otimes \BCa_T / \bbb^{d_Y}$-measurable. 
 \end{thm}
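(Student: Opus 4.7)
My approach combines a Picard iteration---where joint $(\omega,\BX)$-measurability is transparent at every step---with the Stricker--Yor-type measurable selection result recalled in the appendix. First, I would set $Y^{(0)}(t,\omega;\BX):=\xi(\omega,\BX)$ and define iteratively
\[
Y^{(n+1)}_t(\omega;\BX)
:= \xi(\omega,\BX)
+ \int_0^t b_r(Y^{(n)}_r;\BX)\,dr
+ \int_0^t \sigma_r(Y^{(n)}_r;\BX)\,dB_r
+ \int_0^t (f_r,f_r')(Y^{(n)}_r;\BX)\,d\BX_r.
\]
The a priori estimate \eqref{est.apri.m} together with \cref{thm:stabilityforRSDEs_compendium} (applied with $\bar\BX=\BX$ and iterated as a contraction on a sufficiently short interval, then patched globally) gives, for each fixed $\BX$, convergence $Y^{(n)}(\cdot;\BX)\to Y(\cdot;\BX)$ in $\D^{\alpha,\beta}_X L_{p,\infty}$, and in particular uniformly in time in $L^p(\Omega)$.

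Next, I would verify inductively that each iterate $Y^{(n)}$ admits a $\ccc_T$-optional version. The base case $Y^{(0)}=\xi(\omega,\BX)$ is covered by \cref{assum-jt-measu}. For the inductive step, with a $\ccc_T$-optional version of $Y^{(n)}$ in hand, the Lebesgue integral in $r$ is $\ccc_T$-jointly measurable by Fubini; the It\^o integral admits a $\ccc_T$-jointly measurable version by approximation via Riemann sums followed by Stricker--Yor, which eliminates the $\BX$-dependent null sets arising from $L^p$-convergence; and the rough integral in \cref{prop.rsint} is likewise an $L_p$-limit of Riemann sums $\sum_u Z_u\delta X_{u,v\wedge t}+Z'_u\XX_{u,v\wedge t}$ whose dependence on $\BX$ is, at each partition level, continuous through $\BX\mapsto(\delta X_{u,v},\XX_{u,v})$. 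Extracting a $\P$-a.s.\ convergent subsequence and applying Stricker--Yor once more yields a $\ccc_T$-optional version of $Y^{(n+1)}$.

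Finally, with each $Y^{(n)}$ having a $\ccc_T$-optional version and $Y^{(n)}(\cdot;\BX)\to Y(\cdot;\BX)$ uniformly in $t$ in $L^p$, I would extract a further subsequence converging $\P$-a.s.\ uniformly in $t$ (for each fixed $\BX$) and invoke Stricker--Yor one last time to obtain the desired $\ccc_T$-optional version $\tY$. Since $\tY(\cdot;\BX)$ is indistinguishable from $Y(\cdot;\BX)$ for each $\BX$, it inherits $\P$-a.s.\ continuity, and the $\cff_T\otimes\ccc_T/\bbb^{d_Y}$-measurability of $\tY_T$ is then immediate. The main obstacle is precisely the null-set ambiguity intrinsic to stochastic and rough integrals: for each $\BX$ these limits are only defined up to indistinguishability, with exceptional sets that can depend on $\BX$ over the uncountable space $\MC_T$. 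It is the separability of $\MC_T$ as a Polish space that underpins the Stricker--Yor argument and enables the simultaneous selection of a jointly measurable version.
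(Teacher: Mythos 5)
Your overall plan---Picard iteration, inductive verification of $\ccc_T$-optionality of each iterate, then passage to the limit---is essentially the route taken in the paper: the paper defines the same Picard scheme, handles the Lebesgue and It\^o integrals via the Stricker--Yor-type Lemma~\ref{SY78-Lem2} and Proposition~\ref{SY78-Prop5}, and handles the rough stochastic integral via Corollary~\ref{optional-r.i.} (itself a consequence of Lemma~\ref{optional-limit}).

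There is one spot, however, where your argument glosses over the crux. You propose to ``extract a further subsequence converging $\P$-a.s.\ uniformly in $t$ (for each fixed $\BX$) and invoke Stricker--Yor one last time.'' If the subsequence is chosen $\BX$-by-$\BX$, you must ensure that the subsequence indices $k\mapsto n_k(\BX)$ are measurable in $\BX$; otherwise the process $\tZ^k_t(\omega,\BX):=Z^{n_k(\BX)}_t(\omega,\BX)$ is not $\ccc_T$-optional and you cannot conclude. Proposition~\ref{SY_Prop1} (Stricker--Yor) handles convergence in probability of random variables indexed by $u$ and yields a $\uuu\otimes\cff$-measurable limit---but by itself it does not produce a $\uuu$-\emph{optional} process, which is the stronger joint measurability $(\ooo\otimes\uuu)$ needed to keep the induction running and to close the argument. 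This is exactly the gap that the paper's Lemma~\ref{optional-limit} fills: for each $k$ it \emph{constructs a measurable selection} $u\mapsto n^u_k$ of subsequence indices, using the fact that $(u,n)\mapsto\sup_{i,j\ge n}\P\bigl(\sup_t|Z^i-Z^j|_t(u)>2^{-k}\bigr)$ is $\uuu\otimes\bbb(\N)$-measurable, and only then passes to $k\to\infty$. Your intuition that Polish separability underpins the selection is right, but the precise mechanism is a measurable choice of a $\BX$-dependent subsequence rather than either a fixed subsequence or an unstructured $\BX$-pointwise one. With that single lemma made explicit, your proof coincides with the paper's.
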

%

We prepare the proof with the following measurable selection result, closely related to \cite{SY78} which we include for the reader's convenience as  \cref{SY_Prop1} in the appendix (see also  \cite[Theorem 62]{MR2273672}). 

 \begin{lem}\label{optional-limit}
 	Suppose that $Z^n:[0,T] \times \Omega \times U \rightarrow \R^d$ has a $ \uuu$-optional version in the sense of Definition \ref{def:optional}, for each $n \in \N$. Assume for each $u \in U, $ $\{Z^n(\cdot, u)\}_n$ are c\`adl\`ag, and converge to $Z(\cdot,u)$ uniformly in time in $\P$-probability, where $Z:[0,T] \times \Omega \times U \rightarrow \R^d$.  Then $Z$ has a $ \uuu$-optional version, which is $\P$-a.s. c\`adl\`ag for each $u \in U.$ 
\end{lem}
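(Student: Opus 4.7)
The plan is to reduce the statement to the Stricker--Yor-type measurable selection result (\cref{SY_Prop1} in the appendix), which is precisely tailored for this kind of situation.

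First, for each $n$ I would fix a $\uuu$-optional version $\widetilde{Z}^n$ of $Z^n$, so that $\widetilde{Z}^n \colon [0,T]\times\Omega\times U \to \R^d$ is $\ooo\otimes\uuu/\bbb^d$-measurable and, for every $u\in U$, $\widetilde{Z}^n(\cdot,u)$ is $\P$-indistinguishable from $Z^n(\cdot,u)$. In particular $\widetilde{Z}^n(\cdot,u)$ is c\`adl\`ag outside a $\P$-null set (depending on $u$), and the assumed uniform-in-time convergence in probability transfers verbatim from $Z^n$ to $\widetilde{Z}^n$.

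Next I would apply \cref{SY_Prop1} to the family $(\widetilde{Z}^n)$: since each $\widetilde{Z}^n$ is jointly measurable in $(t,\omega,u)$ and, for every $u$, converges in $\P$-probability to $Z(\cdot,u)$ uniformly in $t$, the Stricker--Yor construction yields a $\ooo\otimes\uuu/\bbb^d$-measurable process $\widetilde{Z}$ such that, for every $u\in U$, $\widetilde{Z}(\cdot,u)$ is $\P$-indistinguishable from $Z(\cdot,u)$. This $\widetilde{Z}$ is the $\uuu$-optional version we seek.

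It remains to verify that $\widetilde{Z}(\cdot,u)$ is $\P$-a.s.\ c\`adl\`ag. Fix $u$; by a standard subsequence argument applied to uniform convergence in probability, extract $n_k=n_k(u)$ along which $\sup_{t\in[0,T]}|\widetilde{Z}^{n_k}(t,\cdot,u)-Z(t,\cdot,u)|\to 0$ $\P$-a.s. Since each $\widetilde{Z}^{n_k}(\cdot,u)$ is c\`adl\`ag $\P$-a.s.\ and uniform limits of c\`adl\`ag paths are c\`adl\`ag, $Z(\cdot,u)$ is c\`adl\`ag $\P$-a.s., and hence so is its indistinguishable modification $\widetilde{Z}(\cdot,u)$. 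The main obstacle, which I expect to be handled by \cref{SY_Prop1}, is extracting a jointly $(\omega,u)$-measurable limit from a sequence of jointly measurable processes whose convergence holds only in probability with $u$-dependent exceptional null sets; modulo that, the argument is a clean two-step reduction followed by the standard ucp-to-a.s.\ c\`adl\`ag passage along subsequences.
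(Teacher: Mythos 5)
Your plan identifies the right ingredients (Stricker–Yor, subsequence extraction, uniform limits of c\`adl\`ag paths preserving c\`adl\`ag), but there is a genuine gap in step two: \cref{SY_Prop1} as stated is a result about sequences of $\uuu\otimes\cff$-measurable \emph{random variables} on $U\times\Omega$, and it produces a $\uuu\otimes\cff$-measurable limit function; it says nothing about \emph{processes} on $[0,T]\times\Omega\times U$ nor about $\ooo\otimes\uuu$-measurability. There is no direct way to feed a sequence of $\uuu$-optional processes into \cref{SY_Prop1} and get back a $\uuu$-optional process: if you apply it at each fixed $t$ you only get a $\uuu\otimes\cff$-measurable function of $(u,\omega)$ for each $t$ separately, with no joint measurability in $t$ and no guarantee of $\cff_t$-adaptedness at time $t$; if you instead view the paths as $D([0,T],\R^d)$-valued random variables, you again get a $\uuu\otimes\cff$-measurable path-valued map, from which optionality of the evaluated process does not follow. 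Your final sentence acknowledges the "main obstacle" but outsources it to \cref{SY_Prop1}, which cannot carry that load.

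The paper's proof supplies exactly the missing piece. After WLOG taking each $Z^n$ itself $\uuu$-optional, it sets $\Delta^{i,j}_t(\omega,u)=|Z^i-Z^j|(t,\omega,u)$, shows that $(u,n)\mapsto\sup_{i,j\ge n}\P\bigl(\sup_t\Delta^{i,j}_t(u)>2^{-k}\bigr)$ is $\uuu\otimes\bbb(\N)$-measurable (using the c\`adl\`ag property to reduce $\sup_t$ to a countable supremum), and hence that the subsequence indices $n^u_k$ are \emph{measurable functions of} $u$. The reindexed processes $\tZ^k_t(\omega,u):=Z^{n^u_k}_t(\omega,u)$ are then still $\uuu$-optional, and the \emph{pointwise} limit $\tZ:=\lim_k\tZ^k$ (set to $0$ where the limit fails) remains $\uuu$-optional because pointwise limits of jointly measurable maps are jointly measurable; moreover, by the $2^{-k}$ rate built into $n^u_k$, for each $u$ the convergence $\tZ^k(\cdot,u)\to Z(\cdot,u)$ is uniform a.s., giving both indistinguishability and the a.s.\ c\`adl\`ag property. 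This measurable-in-$u$ choice of subsequence is precisely the content you deferred to \cref{SY_Prop1}; it needs to be carried out by hand (or invoked from the Stricker--Yor proof, not merely their statement), and once you do that, the rest of your argument (including the ucp$\to$a.s.\ c\`adl\`ag step) goes through as you describe.
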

 
\begin{proof}  Without loss of generality, we assume $Z^n$ is $ \uuu$-optional. 
 For each \(i,j\), define the process 
$$
\Delta_t^{i,j}(\ome, u):=|Z^i-Z^j|(t,\ome,u)
$$
which is $\uuu$-optional.  Put $n^u_0:=1, $ and for any $k \ge 1,$ define 
$$
n_k^u:= \inf \Big\{ n> (k \vee n_{k-1}^u): \sup_{i,j \ge n} \P(\sup_{t\in[0,T]}\Delta^{i,j}_t(u) > 2^{-k} ) \le 2^{-k} \Big\}. 
$$
Since for any $u$, $\Delta^{i,j}(u)$ is c\`adl\`ag and $\Delta^{i,j}$ is $\uuu$-optional, the mapping $$(u,n) \mapsto \sup_{i,j \ge n} \P(\sup_{t\in[0,T]}\Delta^{i,j}_t(u) > 2^{-k} )$$ is $\uuu \otimes \bbb(\N)/ \bbb^{1}$-measurable. It follows that  the map \(u\mapsto n^u_k\) from \( U\) to \(\N\) is measurable,
and thus for each \(k\), the process \(\tZ^k\), defined by $\tZ^k_t(\ome,u):= Z^{n^u_k}_t(\ome,u)$ for any $(t,\ome,u)$, is $\uuu$-optional. Now let 
\be
\tZ_t(\ome,u):=\left\{
\begin{array}{ll}
	\lim_k \tZ^k_t(\ome,u)  , \ \ & \text{if the limit exists, for fixed $(t,\ome,u)$}, \\[2mm]
	0, \ \ &\text{otherwise.}
\end{array}\right.
\ee
Then $\tZ$ is $\uuu$-optional. 
For each $u \in U,$ since $\tZ^k(\cdot,u)$ converges uniformly in time to $Z(\cdot,u)$ in probability, we have $\P(\text{for any $t \in [0,T],$ } \tZ_t(\cdot,u)=Z_t(\cdot,u))=1$, and thus $\tZ$ is an $\uuu$-optional version of $Z.$ Moreover, since for each fixed $u,$ $Z^n(\cdot, u)$ is c\`adl\`ag and  converges to $Z$ uniformly in time, we see that $\tZ(\cdot, u)$ is $\P$-a.s. c\`adl\`ag.
 \end{proof}

\begin{coro}\label{optional-r.i.}
 
Suppose that for any $\BX=(X,\X) \in \Ca_T,$ $(Z,Z'){(\cdot\ ; \BX)} \in \BD^{\gamma,\gamma'}_X L_{p,q}$ with \(p\in[2,\infty)\), \(q\in[p,\infty]\), \(\gamma,\gamma'\in[0,1]\) such that \(\alpha+\gamma>1/2\) and \(\alpha+\min(\alpha,\gamma)+\gamma'>1\). Furthermore, assume that $(Z,Z')$ as a mapping on $[0,T] \times \Omega \times \Ca_T$ has a $\BCa_T$-optional version. Then the rough integral $\int_0^.(Z,Z')d\BX$ has a $\BCa_T$-optional version, which is $\P$-a.s. continuous for any fixed $\BX \in \Ca_T$.
 
\end{coro}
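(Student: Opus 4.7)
The plan is to realize the rough integral as a uniform-in-time $L_p$-limit of Riemann sums, verify the $\ccc_T$-optionality at the level of these (finite) Riemann sums, and then invoke \cref{optional-limit} to pass to the limit.

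\medskip

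\textbf{Step 1: Riemann sums are $\ccc_T$-optional.} For each partition $\cpp$ of $[0,T]$, introduce
\[
S^\cpp_t(\omega,\BX) \;:=\; \sum_{[u,v]\in\cpp,\, u\le t}\Bigl(Z_u(\omega,\BX)\,\delta X_{u,v\wedge t}(\BX)+Z'_u(\omega,\BX)\,\XX_{u,v\wedge t}(\BX)\Bigr).
\]
Since $(Z,Z')$ has a $\ccc_T$-optional version, each map $(\omega,\BX)\mapsto (Z_u,Z'_u)(\omega,\BX)$ is $\cff_u\otimes\ccc_T$-measurable, while the evaluation $(t,\BX)\mapsto (\delta X_{u,v\wedge t},\XX_{u,v\wedge t})$ is continuous in $t$ and Borel in $\BX$. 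For $t\ge u$, the product $Z_u\,\delta X_{u,v\wedge t}+Z'_u\,\XX_{u,v\wedge t}$ is $\cff_u$-measurable (hence $\cff_t$-adapted) and continuous in $t$; it vanishes at $t=u$, so the cutoff $\mathbf 1_{\{t\ge u\}}$ creates no discontinuity. Summing over the finitely many cells of $\cpp$, $S^\cpp$ is continuous-in-$t$ and adapted for each fixed $\BX$, jointly Borel in $(t,\omega,\BX)$. A dyadic right-continuous approximation $S^{\cpp,n}_t:=S^\cpp_{\fl{2^n t}/2^n}$ is evidently $\ooo\otimes\ccc_T$-measurable (a countable union of rectangles $[k/2^n,(k+1)/2^n)\times A\times B$ with $A\in\cff_{k/2^n}$, $B\in\ccc_T$), and its pointwise limit is $S^\cpp$. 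Hence $S^\cpp$ is $\ccc_T$-optional.

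\medskip

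\textbf{Step 2: Uniform-in-time convergence.} Choose partitions $\cpp_n$ with $|\cpp_n|\to 0$. By \cref{prop.rsint}, for every fixed $\BX\in\MC_T$,
\[
\sup_{t\in[0,T]}\bigl\|S^{\cpp_n}_t(\cdot,\BX)-\textstyle\int_0^t (Z,Z')d\BX\bigr\|_p \;\longrightarrow\; 0
\]
as $n\to\infty$. In particular, $S^{\cpp_n}_\cdot(\cdot,\BX)\to \int_0^\cdot(Z,Z')d\BX$ uniformly in $t$ in $\P$-probability for each $\BX\in\MC_T$.

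\medskip

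\textbf{Step 3: Apply the selection lemma.} The sequence $Z^n:=S^{\cpp_n}$ satisfies the hypotheses of \cref{optional-limit} with $U=\MC_T$, $\uuu=\ccc_T$: each $Z^n$ is $\ccc_T$-optional (Step 1), each $Z^n(\cdot,\BX)$ is continuous hence c\`adl\`ag, and the uniform-in-time convergence in probability is supplied by Step 2. The lemma produces a $\ccc_T$-optional version $\widetilde I$ of $\int_0^\cdot(Z,Z')d\BX$ that is $\P$-a.s. c\`adl\`ag for every $\BX$. Because \cref{prop.rsint} already asserts the limit integral is a continuous process for each $\BX$, and $\widetilde I$ agrees with it up to a $\P$-null set for every $\BX$, the c\`adl\`ag version is in fact $\P$-a.s.\ continuous.

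\medskip

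The only genuine technical point is Step 1: making sure that the Riemann-sum process is measurable in the \emph{product} sense $\ooo\otimes\ccc_T$, and not merely optional in $(t,\omega)$ for each fixed $\BX$. The key observations are that the control cutoff $v\wedge t$ kills the would-be jump at $t=u$ (so $S^\cpp$ is continuous in $t$), and that the pre-integrands $(Z_u,Z'_u)$ and the rough-path increments $(\delta X_{u,v\wedge t},\XX_{u,v\wedge t})$ live on disjoint coordinate blocks, allowing a clean product-measurability argument via dyadic approximation. Once this joint optionality is secured, \cref{optional-limit} and \cref{prop.rsint} do the rest.
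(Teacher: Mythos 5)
Your proof is correct and follows the same route as the paper's: approximate the rough stochastic integral by compensated Riemann sums, invoke \cref{prop.rsint} for uniform-in-time convergence in probability, and close with \cref{optional-limit}. The only difference is one of exposition: the paper's proof is two sentences long and silently treats the $\ccc_T$-optionality of the Riemann-sum processes $I^{\op,(\BX)}$ as obvious, whereas your Step~1 works it out carefully (adaptedness of $Z_u,Z'_u$ at fixed parameter, vanishing of each cell's contribution at $t=u$ so that $S^\cpp$ is continuous rather than merely c\`adl\`ag, and dyadic step-function approximation to pass from ``adapted and continuous in $t$ with joint measurability in $(\omega,\BX)$'' to $\ooo\otimes\ccc_T$-measurability). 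That detail is genuinely worth spelling out, but it does not change the logical structure of the argument.
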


\begin{proof} 
 Without loss of generality, we assume $(Z,Z')$ is $\BCa_T$-optional. According to \cref{prop.rsint}, for any $\BX \in \Ca_T$, 
the process $\int_0^.(Z,Z')d\BX$ is  the uniform in time limit in probability of  
$$
I^{\op,(\BX)}_t:= \sum_{[u,v] \in \op, u \le t} Z_u(\cdot\ ; \BX) \delta X_{u,v \wedge t} + Z'_u(\cdot\ ; \BX) \X_{u,v \wedge t},
$$
as $|\op|$ goes to zero. Then the result follows by  \cref{optional-limit}.
\end{proof}

\begin{lem}\label{meas-coef}
Suppose that $g:[0,T] \times \Omega \times \R^d \times \Ca_T \rightarrow \R^m$ is $\bbb^{d} \otimes \BCa_T$-optional. 
Suppose $Y:[0,T] \times \Omega \times \Ca_T \rightarrow \R^d$ has a $\BCa_T$-optional version. Then $Z_t(\ome,\BX):= g(t,\ome,Y_t(\ome,\BX), \BX)$ has a $\BCa_T$-optional version.
\end{lem}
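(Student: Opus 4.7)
The statement is purely a measurability/bookkeeping result: once a $\ccc_T$-optional version $\tY$ of $Y$ is fixed, $Z$ is essentially defined by composition with $g$, and the hypotheses are tailored so that this composition lands in the optional $\sigma$-field of the right product space. The plan is therefore to construct the candidate version explicitly as
\[
\tZ_t(\omega,\BX) := g\bigl(t,\omega,\tY_t(\omega,\BX),\BX\bigr),
\]
where $\tY$ is any $\ccc_T$-optional version of $Y$ supplied by hypothesis, and then verify (i) measurability of $\tZ$ with respect to $\ooo\otimes\ccc_T$ and (ii) indistinguishability from $Z$ for each fixed $\BX$.

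For (i), I would introduce the auxiliary map
\[
\Phi:\bigl([0,T]\times\Omega\times\MC_T,\,\ooo\otimes\ccc_T\bigr)\longrightarrow\bigl([0,T]\times\Omega\times\R^d\times\MC_T,\,\ooo\otimes\bbb^d\otimes\ccc_T\bigr),
\]
defined by $\Phi(t,\omega,\BX)=(t,\omega,\tY_t(\omega,\BX),\BX)$. To show $\Phi$ is measurable into the product $\sigma$-field, it suffices to check each coordinate. The projection onto the $(t,\omega)$-factor is $\ooo\otimes\ccc_T/\ooo$-measurable since for any $A\in\ooo$ one has $\Phi^{-1}(A\times\R^d\times\MC_T)=A\times\MC_T\in\ooo\otimes\ccc_T$; the projection onto the $\BX$-factor is measurable by the same elementary argument; and the $\R^d$-coordinate $(t,\omega,\BX)\mapsto\tY_t(\omega,\BX)$ is $\ooo\otimes\ccc_T/\bbb^d$-measurable by the very definition of a $\ccc_T$-optional version.

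Since $g:[0,T]\times\Omega\times\R^d\times\MC_T\to\R^m$ is by hypothesis $\bbb^d\otimes\ccc_T$-optional, i.e.\ $\ooo\otimes\bbb^d\otimes\ccc_T/\bbb^m$-measurable, the composition $\tZ=g\circ\Phi$ is $\ooo\otimes\ccc_T/\bbb^m$-measurable, so $\tZ$ is $\ccc_T$-optional in the sense of \cref{def:optional}. For (ii), fix $\BX\in\MC_T$: by hypothesis there is a $\P$-null set $N_\BX$ off which $\tY_\cdot(\omega,\BX)=Y_\cdot(\omega,\BX)$ for all $t\in[0,T]$; on the complement of $N_\BX$ we then have $\tZ_t(\omega,\BX)=g(t,\omega,Y_t(\omega,\BX),\BX)=Z_t(\omega,\BX)$, so $\tZ(\cdot,\BX)$ and $Z(\cdot,\BX)$ are indistinguishable. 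Hence $\tZ$ is a $\ccc_T$-optional version of $Z$.

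There is no real analytic obstacle here; the only point requiring care is to keep track of which $\sigma$-fields live on which factors (in particular that the target space of $g$ carries $\ooo\otimes\bbb^d\otimes\ccc_T$, not the na\"ive product $\bbb_T\otimes\cff\otimes\bbb^d\otimes\ccc_T$) so that $\Phi$ lands correctly and the composition $g\circ\Phi$ inherits $\ooo\otimes\ccc_T$-measurability.
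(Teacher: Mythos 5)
Your proof is correct and follows essentially the same route as the paper's: replace $Y$ by a $\ccc_T$-optional version and obtain optionality of $Z$ by composing with the $\bbb^d\otimes\ccc_T$-optional map $g$. You merely unfold the composition by introducing the intermediate map $\Phi$ and checking coordinate-wise measurability, a routine step the paper leaves implicit.
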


\begin{proof}
	Without loss of generality, assume $Y$ is $\BCa_T$-optional. Then let 
	$$(S_T,\MS):=(([0,T]\times \Omega) \times \Ca_T, \MO \otimes  \BCa_T ),$$
	 and thus $g:S_T \times \R^d \rightarrow \R^m$ is $\MS \otimes \bbb^d/\bbb^m$-measurable. Note that $Y$ is $\MS/\bbb^d$-measurable, and thus $Z$ is $\MS/\bbb^m$-measurable, which means $\BCa_T$-optional.
\end{proof}

\begin{proof}[Proof of  \cref{thm-rsde-optional}]
Since global-on-$[0,T]$ solution are constructed by concatenation of local solutions, it suffices to check the stated measurability for local solutions
(with random initial data). 
Following Theorem 4.6 in \cite{FHL21}, these are constructed by Picard iteration in a space of $X$-stochastic controlled rough paths, 
started from the process $t\mapsto (\xi  +f_0(\xi)\delta X_{0,t},f_0(\xi)) =: (Y^{\BX;(0)}, Y^{\prime,\BX;(0)})=: \mathrm{Y}^{\BX;(0)}$.
Then we define inductively $\mathrm{Y}^{\BX;(n+1)} \equiv (Y^{\BX;(n+1)}, Y'^{,\BX;(n+1)}) := (Y^{\BX;(n+1)}, f(Y^{\BX;(n)}) ), $ where
\be\label{meas-picard}
\begin{split}
Y^{\BX;(n+1)}:=\ & \xi + \int_0^t b_r(Y^{\BX;(n)}_r ; \BX)dr + \int_0^t \sigma_r (Y^{\BX;(n)}_r ; \BX)dB_r\\
& +\int_0^t  \Big(f_r, (D_y f_r)  Y'^{,\BX;(n)}_r+ f'_r   \Big) (Y^{\BX;(n)}_r; \BX )d\BX_r.	
\end{split}
\ee
We can see inductively that
$(t,\omega, \BX) \mapsto \mathrm{Y}^{\BX;(n)} (t, \omega)= \mathrm{Y}^{(n)} (t, \omega; \BX)$ is $\BCa_T$-optional. Indeed, the case $n=0$ being obvious in view of $\cff_0$-measurability of $\xi$ and $f_0.$ To see that $\BCa_T$-optional measurability of $\mathrm{Y}^{(n)}$ implies the same for $\mathrm{Y}^{(n+1)}=({Y}^{(n+1)}, f(Y^{(n)}) )$, we have to deal with Lebesgue integration $\int b (...) dt$, It\^o integration $\int \sigma (...) dB$ and  stochastic rough integration $\int (...  ) d \BX$, where in the these three cases the integrand $ (...)$ is itself $\BCa_T$-optional in view of Lemma \ref{meas-coef}. The desired $\BCa_T$-optional measurability of the first two cases then follows from results on measurable selection for stochastic integration with parameters, more precisely Lemma \ref{SY78-Lem2}  and Proposition \ref{SY78-Prop5} in the appendix. It remains to understand that the rough stochastic integral as map
$$
      (t, \omega, \BX) \mapsto \int_0^t \left(f_r, (D_y f_r) Y'^{,\BX;(n)}_r + f'_r \right)(Y^{\BX; (n)}_r ;\BX) d\mathbf{X}_r
$$ 
is $\BCa_T$-optional. By \cref{assum-jt-measu} $(2)$ and 
Lemma \ref{meas-coef} again, we see that the integrand, abbreviated to $(Z,Z')$ is $\BCa_T$-optional. The $\BCa_T$-optional measurability of the rough integral then follows from Corollary \ref{optional-r.i.} above. This shows that all ${Y}^{(n)}$, $n=1,2,...$ are $\BCa_T$-optional. 

By Lemma \ref{optional-limit} we can then find $Y=Y(t,\omega;\BX)$ which is $\BCa_T$-optional and the limit in $n$, uniformly on $[0,T]$ and in probability, of ${Y}^{\BX;(n)}$ for all $\BX$, which completes the proof.
 \end{proof}

\begin{rem} If we want to work with c\`adl\`ag rough paths, we should work with ``$\BCa_T$-predictable'' rather than $\BCa_T$-optional integrands, already needed when including BV integrators, as
is the case in the semimartingale setting of Theorem 1 in {\cite{SY78}}.
\end{rem}

\section{Rough stochastic control} \label{DPP_for_RSDEs}

\subsection{Setup and assumptions} \label{sub.setupdpp}


Let $\Omega = \C([0,T];\R^{d_B})$, and $B$ the canonical process on the Wiener space $(\Omega,\MF, ( \MF_t )_{t \ge 0}, \P)$ with $\{\MF_t\}_{t\ge 0}$ as the augmented filtration generated by $B$.
Let $A$ be a  Polish space equipped with its Borel sigma algebra $\aaa$, and $\MA$ be the space of {\em admissible} controls, by which we mean {\em optional}\footnote{The use of optional controls is related to Lemma \ref{lem:op} below.} processes, i.e. 
\be\label{def:ad-cont}
\MA:=\{\ctrl: ([0,T]\times \Omega, \MO) \rightarrow (A,\aaa) \text{ is measurable} \}.
\ee

Suppose that $(b,\sigma, \ell): [0,T] \times \R^{d_Y} \times A \rightarrow \R^{d_Y} \times \ML(\R^{d_B}, \R^{d_Y}) \times \R,$ $g:\R^{d_Y} \rightarrow \R,$ and $f:[0,T] \times \R^{d_Y} \rightarrow \ML(\R^{d_X}, \R^{d_Y})$. For any $\BX \in \Ca_T,$ we consider the following controlled RSDEs: for any $0<s\le t \le T$, $\ctrl \in \MA$, and $y \in \R^{d_Y},$ 
\be\label{c-rsde}\left\{
\begin{split}
&dY_t(\omega)=b(t,Y_t(\omega),\ctrl_t (\omega))dt+\sigma(t,Y_t(\omega),\ctrl_t(\omega))dB_t(\omega)+(f,f')(t,Y_t(\omega)) d\BX_t,\\
 & Y_s = y,
\end{split}
\right.
\ee
and the cost function 
\be\label{cost-rp}
J (s,y, \ctrl;\BX):=\E\left[ g(Y_T^{(s,y,\ctrl, \BX)})+ \int_s^T \ell (r, Y_r^{(s,y,\ctrl, \BX)}, \ctrl_r)dr \right],
\ee
with $Y^{(s,y,\ctrl,\BX)}$, also written as $Y^{s,y} (\ctrl,\BX)$, the solution of \eqref{c-rsde}.
The rough stochastic control problem is concern about the minimisaion of the cost functional, directly related to the rough value function
\be\label{vf-rsde11}
\mathcal V (s,y; \BX ) := \mathrm{inf}_{\ctrl  \in {\mathcal{A}}} J (s,y, \ctrl;\BX).
\ee
The coefficients $(b,\sigma,f,g,\ell)$, which are implicit in \eqref{vf-rsde11}, are the data for the above control problem. For the well-posedness of controlled RSDE \eqref{c-rsde} and the cost function, we make the following assumption.

\begin{remark}[Removal of running cost by state-augmentation] \label{rem:remove_running_cost}
We may assume $\ell = 0$ for otherwise it suffices to consider the enhanced
process $(Y,Z)$ with $Y$ as in \eqref{c-rsde} and additional dynamics 
$$
dZ_t (\omega) = \ell (t, Y_t(\omega),\ctrl_t(\omega)) dt;
$$
in terms of $(Y,Z)$, with drift $(b,\ell)$, the value function \eqref{cost-rp} can then be written as
$$
    \tilde{\mathcal{V}} (s,(y,0); \BX)  =
     \inf_{\ctrl } \E^{s, (y,0)} [\tilde g (Y_T,Z_T)].
$$ 
for some $\tilde g$ such that $\tilde g (Y_T,Z_T) =  g (Y_T) +  Z_T$. (The obvious choice $(y,z) \mapsto g(y)+z$ is not bounded. However assuming $g, \ell$ uniformly bounded, it is clear that $Z$ also stays uniformly bounded on compacts in time, so that is suffices to modify $(y,z) \mapsto g(y)+z$ for large $z$ such as have $\tilde g$ also bounded.)
\end{remark} 

\begin{assumption}\label{assum-dpp} \phantom{new line}

   \item[$(1)$] Suppose that $ (b,\sigma, \ell) :[0,T] \times \R^{d_Y} \times A \rightarrow \R^{d_Y} \times \mathrm{Lin}(\R^{d_B}, \R^{d_Y}) \times \R $ is bounded and measurable. Moreover, each $\psi\in \{b,\sigma, \ell\}$ is uniformly Lipschitz in the sense that for any $y,\bar{y} \in \R^{d_Y}$,
\be
 \sup_{u \in A}\sup_{t\in [0,T]}  |\psi(t,y,u)-\psi(t, \bar{y}, u)| \le \|\psi\|_{\lip} |y-\bar{y}|, \label{LipAss}
\ee
for some positive constant $\|\psi\|_{\lip}.$  
\item[$(2)$] 
Suppose 
$(f,f') (\cdot \ ;\BX) $ belongs to $ \BD_X^{2\beta } \C^{\gamma}_b$ for $\beta \in (0,\alpha]$, $\gamma > \frac{1}{\alpha}$ such that $\alpha+\beta>\frac12$ and $\alpha + (\gamma-1)\beta>1.$
\item[$(3)$] Suppose $g \in \mathrm{BUC}(\R^{d_Y})$, i.e. bounded uniformly continuous, with concave modulus of continuity $\lambda_g (.)$.
\end{assumption}


\subsection{Regularity of the rough value function}
Under the above assumption, we show the continuity of the rough value function \eqref{vf-rsde11}, which plays a key role in the proof of dynamical programming principle.
Following Remark \ref{rem:remove_running_cost} we only treat $\ell \equiv 0$, but see Remark \ref{rem:cont_val}, (ii). 
\begin{thm} \label{thm:RoughValueReg}
Suppose $\BX,\bBX  \in \Ca_T$, and 
$(  b,   \sigma,    f  , f',  g)$, 
$(\bar b, \bar \sigma,  \bar f, {\bar f}', \bar g)$ 
satisfy Assumption \ref{assum-dpp}. For any $\ctrl \in \MA,$ let $\MV(t,y;\BX)$ (resp. $J(t,y,\ctrl;\BX)$) and $  \bar{\mathcal{V}} (\bar t,\bar y;{\bar \BX})$ (resp. $\bar{J}(\bt,\by,\ctrl;\bBX)$) be the corresponding value functions (resp. cost function) given by \eqref{vf-rsde11} (resp. \eqref{cost-rp}) with data 
$(b,\sigma, g, f, f' ,\BX)$ and $(\bar b, \bar \sigma, \bar g, \bar f ,{\bar f}', \bar \BX)$ 
respectively. ,
 \begin{multline} \label{localest-costfct}
	 	\sup_{\ctrl \in \MA } |\bar {J}(\bar t,\bar y, \ctrl;\bar \BX)- {J}(t,y, \ctrl;\BX)|
	 	\lesssim | (\bar{g} - g)_+ |_\infty 
	 	\\
	 	+\lambda_{g} \left(|T-t\vee\bar t|^\alpha(
		|(b,\sigma)-(\bar b, \bar \sigma) |_\infty
		+  F %
		+ \rho_\alpha(\bar\BX,\BX)
		)+|t-\bar t|^{\alpha}+|y-\bar y| \right)
 	\end{multline}
with $F = \|(f-\bar f, f'-\bar f')\|_{\gamma-1}+  \llbracket f, f'; \bar{f}, \bar{f}' \rrbracket_{X, \bar{X}; 2 \beta}$. 
Moreover,
       \begin{multline} \label{equ:cRSDEestimate}
	 	|\bar {\mathcal{V}}(\bar t,\bar y;\bar \BX)- {\mathcal{V}}(t,y;\BX)|
	 	\lesssim | (\bar{g} - g)_+ |_\infty 
	 	\\
	 	+\lambda_{g} \left(|T-t\vee\bar t|^\alpha(
		|(b,\sigma)-(\bar b, \bar \sigma) |_\infty+ F + 
		\rho_\alpha(\bar\BX,\BX)
		)+|t-\bar t|^{\alpha}+|y-\bar y| \right),
 	\end{multline}
and the implicit constant depends on the parameter $M$ from \cref{thm:stabilityforRSDEs_compendium}, in particular on $\nn{\bBX}_\alpha+\nn{\BX}_\alpha$.
\end{thm}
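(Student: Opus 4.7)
The plan is to derive the cost-functional estimate \eqref{localest-costfct} from the RSDE stability theorem \cref{thm:stabilityforRSDEs_compendium} applied to the process with the control plugged in, and then obtain the value-function estimate \eqref{equ:cRSDEestimate} by taking the infimum over admissible controls. The crucial observation is that $\eta \in \MA$ is merely a measurable selector into the Polish space $A$, and $(b,\sigma)$ are uniformly bounded and uniformly Lipschitz in $y$ with constants independent of $u\in A$ (by \cref{assum-dpp}(1)); hence $(t,\omega,y)\mapsto b(t,y,\eta_t(\omega))$ and $\sigma(t,y,\eta_t(\omega))$ are random bounded Lipschitz vector fields in the sense of \cref{def:stochasticboundedandLipschitzvectorfield}, with norms bounded \emph{uniformly in} $\eta \in \MA$. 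Consequently all constants from \cref{thm:wellposed} and \cref{thm:stabilityforRSDEs_compendium} can be taken uniform in the control.

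Without loss of generality assume $t \le \bar t$. I would first apply the a priori bound \eqref{est.apri.m} (together with the Davie expansion from \cref{def:solutionRSDEs_compendium}) to $Y^{t,y}(\eta,\BX)$ between times $t$ and $\bar t$ to obtain
\[
  \|Y^{t,y}_{\bar t}(\eta,\BX)-y\|_p \lesssim |\bar t-t|^{\alpha},
\]
uniformly in $\eta$, with implicit constant depending only on $\nn{\BX}_\alpha,\|b\|_\infty,\|\sigma\|_\infty$ and $M$. This reduces matters to comparing two RSDEs on $[\bar t,T]$ driven by the same control $\eta$: one with initial datum $Y^{t,y}_{\bar t}(\eta,\BX)$ and data $(b,\sigma,(f,f'),\BX)$, the other with initial datum $\bar y$ and data $(\bar b,\bar\sigma,(\bar f,\bar f'),\bar\BX)$. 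Invoking \cref{thm:stabilityforRSDEs_compendium} on $[\bar t,T]$ then yields, for any $p\ge 2$,
\[
  \bigl\|\sup_{r \in [\bar t, T]}|Y^{t,y}_r-\bar Y^{\bar t,\bar y}_r|\bigr\|_p
  \lesssim |y-\bar y|+|t-\bar t|^{\alpha}+|(b,\sigma)-(\bar b,\bar\sigma)|_\infty + F +\rho_\alpha(\BX,\bar\BX),
\]
uniformly in $\eta\in\MA$. The prefactor $|T-t\vee\bar t|^\alpha$ appearing inside $\lambda_g$ in the statement reflects the short-time scaling of the stability constants on the interval $[\bar t,T]$ (data differences only act over a time window of length $T-\bar t$, and the RSDE increments contract at the H\"older rate); this is extracted by tracking the explicit $(t-s)^\beta$ factors through the proof of \cref{thm:stabilityforRSDEs_compendium}.

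For the cost functional with $\ell\equiv 0$, I write
\[
  \bar J(\bar t,\bar y,\eta;\bar\BX)-J(t,y,\eta;\BX)
  =\E[(\bar g-g)(\bar Y_T)]+\E[g(\bar Y_T)-g(Y_T)].
\]
The first summand is bounded by $|(\bar g-g)_+|_\infty$ (for the reverse direction of the absolute value, swap the roles of the two data sets). For the second, using concavity of $\lambda_g$ together with Jensen's inequality,
\[
  \E|g(\bar Y_T)-g(Y_T)|\le \E\lambda_g(|Y_T-\bar Y_T|)\le \lambda_g(\E|Y_T-\bar Y_T|),
\]
which is controlled by the $L^p$ bound above (taking $p=1$, or simply $L^1\le L^p$). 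For non-trivial $\ell$, the running cost adds $(T-\bar t)|\bar\ell-\ell|_\infty + \|\ell\|_{\lip}\E\int_{\bar t}^T|Y_r-\bar Y_r|\,dr$, which is linear in the solution distance and is absorbed by the extra $|\cdot|$ term in $\tilde\lambda_g(\cdot)=\lambda_g(\cdot)+|\cdot|$. The value-function estimate follows by the trivial inequality $|\inf_\eta \bar J-\inf_\eta J|\le \sup_\eta|\bar J-J|$. The main obstacle is the careful bookkeeping of the initial-time discrepancy $|t-\bar t|$: the a priori Hölder regularity of $Y^{t,y}$ on $[t,\bar t]$ must be combined with the stability bound on $[\bar t,T]$ in such a way that the data-difference terms pick up the small factor $|T-t\vee\bar t|^\alpha$, while the initial-data and initial-time gap enter the modulus $\lambda_g$ without this attenuation---exactly the asymmetry visible in the statement.
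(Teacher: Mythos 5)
Your proposal follows the same route as the paper's proof: reduce to terminal cost via $|\inf-\inf|\le\sup|J-\bar J|$, split off the initial-time discrepancy by the flow property $Y^{t,y}_T(\eta,\BX)=Y^{\bar t,\,Y^{t,y}_{\bar t}(\eta,\BX)}_T(\eta,\BX)$, control $\|Y^{t,y}_{\bar t}-\bar y\|$ by the a~priori estimate \eqref{est.apri.m}, compare the two solutions on $[\bar t,T]$ by \cref{thm:stabilityforRSDEs_compendium} with constants uniform in $\eta$ (as you correctly note, thanks to \cref{assum-dpp}(1)), and close with concavity of $\lambda_g$ and Jensen. The one genuine difference is how you handle $\ell\not\equiv 0$: you estimate the running-cost difference directly, $(T-\bar t)|\ell-\bar\ell|_\infty+\|\ell\|_\lip\,\E\int|Y-\bar Y|$ plus the $\int_t^{\bar t}\ell$ tail (which you omit, though it is harmlessly absorbed); the paper instead augments the state to $(Y,Z)$ with $dZ=\ell\,dt$ and reduces to the $\ell\equiv0$ case with terminal cost $\tilde g(Y_T,Z_T)=g(Y_T)+Z_T$ of modulus $\tilde\lambda_g=\lambda_g+|\cdot|$. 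The augmentation trick is slightly cleaner because it applies the same Jensen argument once, but your direct route is equally valid.
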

 
\begin{remark} \label{rem:cont_val} (i) At first reading, take $f = \bar{f} \in \C^\gamma_b, f' = \bar{f}' \equiv 0$, so that $F = 0$. In this case,  \cref{thm:RoughValueReg} immediately implies that the
rough value function depends continuously on the rough path. 
(ii) In presence of additional running costs $\ell$ and $\bar{\ell}$,  under Assumption \ref{assum-dpp}, applying the previous estimates to the state-augmented problem of Remark \ref{rem:remove_running_cost} easily leads to extensions of the above estimates. In particular, with \(\tilde{\lambda}_g(\cdot)=\lambda_g(\cdot)+|\cdot |\), 
 \begin{multline} \label{equ:cRSDEestimate}
	 	|\bar {\mathcal{V}}(\bar t,\bar y;\bar \BX)- {\mathcal{V}}(t,y;\BX)|
	 	\lesssim | (\bar{g} - g)_+ |_\infty 
	 	\\
	 	+\tilde\lambda_{g} \left(|T-t\vee\bar t|^\alpha(
		|(b,\sigma,\ell)-(\bar b, \bar \sigma, \bar \ell)|_\infty+ F +
		\rho_\alpha(\bar\BX,\BX)
		)+|t-\bar t|^{\alpha}+|y-\bar y| \right).
 	\end{multline}
\end{remark}  
 
\begin{proof} 
Applying triangle inequality, we have, from the very definition of the value function,
	\begin{align*}
		|\bar {\mathcal{V}}(t,y;\bar{\BX})- {\mathcal{V}}(t,y;\BX)|\le |\bar g-g|_\infty+ \sup_{\ctrl  }  |\E g(Y^{\bar t,\bar y}_T(\ctrl,\bar\BX))-\E g( Y^{ t, y}_T(\ctrl,\BX))|,
	\end{align*}
	where we write $Y^{(t,y,\ctrl,\BX )}_s$ as $Y^{ t, y}_s(\ctrl,\BX).$
	Hence, it suffices to consider the case $\bar g=g$.
	By regularity of $g$ and Jensen inequality, 
	\begin{align*}
		| \E [g( Y^{\bar t,\bar y}_T(\ctrl,\bar{\BX}))]-\E [g( Y^{t,y}_T(\ctrl,\BX))]|
		&\le \E \lambda_g(|Y^{\bar t,\bar y}_T(\ctrl,\bar{\BX})-Y^{t,y}_T(\ctrl,\BX)|)
		\\&\le \lambda_g(\E| Y^{\bar t,\bar y}_T(\ctrl,\bar{\BX})-Y^{t,y}_T(\ctrl,\BX)|).
	\end{align*}	
	We assume that $t\le\bar t$.
	By uniqueness of \eqref{c-rsde}, we can write $Y^{t,y}_T(\ctrl,\BX)=Y^{\bar t,Y^{t,y}_{\bar t}(\ctrl,\BX)}_T(\ctrl,\BX)$ and apply \cref{thm:stabilityforRSDEs_compendium}	
	to get that 
	\begin{align*}
		\E| Y^{\bar t,\bar y}_T(\ctrl,\bar{\BX})-Y^{t,y}_T(\ctrl,\BX)|
		&=\E| Y^{\bar t,\bar y}_T(\ctrl,\bar\BX)-Y^{\bar t,Y^{t,y}_{\bar t}(\ctrl,\BX)}_T(\ctrl,\BX)|
		\\&\le\|Y^{t,y}_{\bar t}(\ctrl,\BX) -\bar y\|_2+\|\delta Y^{\bar t,\bar y}_{\bar t,T}(\ctrl,\bar \BX)-\delta Y^{\bar t,Y^{t,y}_{\bar t}(\ctrl,\BX)}_{\bar t,T}(\ctrl, \BX)\|_2
		\\&\lesssim\|Y^{t,y}_{\bar t}(\ctrl,\BX)-\bar y\|_2+|T-\bar t|^\alpha\rho_\alpha(\bar\BX,\BX). 
	\end{align*}

	Using triangle inequality, the identity $Y^{t,y}_t(\ctrl,\BX)=y$ and estimate \eqref{est.apri.m} (noting that  $p\ge2$ can be chosen arbitrarily), we have	
	\[
		\|Y^{t,y}_{\bar t}(\ctrl,\BX)-\bar y\|_2
		\le \|Y^{t,y}_{\bar t}(\ctrl,\BX)-Y^{t,y}_t(\ctrl,\BX) \|_2+|\bar y-y|
		\lesssim|\bar t-t|^\alpha+|\bar y-y|.
	\]
	The implicit constants in the previous estimates are uniform in $t,y,\bar t,\bar y$ and crucially also in $\ctrl$, thanks to \cref{assum-dpp}.
	Putting these estimates altogether, in conjunction with elementary estimates of the form $| \inf A  - \inf B |  \le \sup | A - B |$, we obtain the announced inequality.
\end{proof}

\subsection{Rough dynamical programming principle}

To show the Dynamical Programming Principle (DPP) in the canonical space, 
we exploit the causal structure of optional processes. 

\begin{lem} \label{lem:op}
 
Let $(\Omega, \cff, (\cff_t)_t, \P)$ be the augmented canonical space as described in \cref{sub.setupdpp}. Suppose that $\ctrl:[0,T] \times \Omega \rightarrow A$ is measurable. Then $\ctrl$ is optional w.r.t. $(\cff_t)$ if and only if for a.e. $\ome,$ $\ctrl_t(\ome)= \ctrl_t(\ome_{t \wedge .})$ for all \(t\).
	
\end{lem}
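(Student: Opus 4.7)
My plan rests on the stopping map $\pi_t : \Omega \to \Omega$, $\pi_t(\omega) := \omega_{t \wedge \cdot}$, together with its time-indexed lift $\Psi : [0,T] \times \Omega \to [0,T] \times \Omega$, $\Psi(t,\omega) := (t, \pi_t(\omega))$. Both are jointly Borel, and $\Psi$ is in fact jointly continuous for the sup-norm on $\Omega = C([0,T];\R^{d_B})$. For each $s$, a check on the generators $\{B_r \in E\}$ of $\MF$ shows that $\pi_s$ is $\MF^0_s / \MF$-measurable, where $\MF^0_t := \sigma(B_r : r \le t)$ denotes the raw Brownian filtration; and since $\pi_s(\omega)$ depends continuously on $(s,\omega)$ and is $\MF_s$-measurable in $\omega$, the map $\Psi$ is progressive.

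For the sufficiency direction, suppose $\ctrl_t(\omega) = \ctrl_t(\pi_t(\omega))$ holds off a $\P$-null set $N$ for every $t$. Setting $\tilde\ctrl := \ctrl \circ \Psi$, the progressivity of $\Psi$ combined with the joint Borel measurability of $\ctrl$ makes $\tilde\ctrl$ progressive, hence optional. As $\ctrl$ and $\tilde\ctrl$ agree off the evanescent set $[0,T] \times N$ and $(\MF_t)$ satisfies the usual conditions, $\ctrl$ is itself optional.

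For the necessity direction, I would deploy a monotone class argument on the raw filtration. Let $\MH$ denote the class of bounded $\MF^0$-optional processes $Z$ satisfying the pointwise identity $Z_t(\omega) = Z_t(\pi_t(\omega))$ on $[0,T] \times \Omega$. By Doob--Dynkin, any bounded $\MF^0_s$-measurable random variable $X$ factors as $X = g \circ \pi_s$ for some Borel $g$, and the semigroup identity $\pi_s \circ \pi_t = \pi_s$ for $s \le t$ yields $X(\pi_t(\omega)) = g(\pi_s(\pi_t(\omega))) = g(\pi_s(\omega)) = X(\omega)$ for every $\omega$. Hence the generators $X \cdot \mathbf{1}_{[s,T]}(t)$ all lie in $\MH$; closure under linear combinations and bounded pointwise monotone limits, together with the monotone class theorem, then extends $\MH$ to all bounded $\MF^0$-optional processes. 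To handle the given $\MF$-optional $\ctrl$, one invokes a measurable selection argument on the Polish canonical space (in the spirit of the Stricker--Yor results used in \cref{sec:measurable_prelim}) to produce an $\MF^0$-optional modification $\tilde\ctrl$ of $\ctrl$; then $\tilde\ctrl$ satisfies the causality pointwise, and since $\ctrl = \tilde\ctrl$ off a $\P$-null set, the causality holds $\P$-a.s.\ for $\ctrl$.

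The main obstacle is the transfer from augmented $\MF$ to raw $\MF^0$. It is genuinely subtle because the law of $\pi_t$ under $\P$ is singular with respect to $\P$ for $t < T$: a $\P$-null set $N$ may satisfy $\P(\pi_t^{-1}(N)) > 0$, so pointwise causality cannot be read off naively from a $\P$-a.s.\ identification of an $\MF_t$-measurable random variable with an $\MF^0_t$-measurable one. Working throughout with an $\MF^0$-optional modification is precisely what keeps the monotone class argument clean.
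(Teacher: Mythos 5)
Your route --- the stopping map $\pi_t$, the composite $\ctrl\circ\Psi$, and a monotone class argument on the raw filtration --- is genuinely different from the paper's, which simply chains three citations: Dellacherie--Meyer Th.~IV.78 (augmented-predictable is indistinguishable from raw-predictable), Chung (every stopping time on the Brownian filtration is predictable, so $\ooo=\cpp$), and Dellacherie--Meyer Th.~IV.97 (raw-predictable iff pointwise causal). Your construction is more explicit and self-contained in spirit, but it has gaps.

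The central gap is in the sufficiency direction: ``makes $\tilde\ctrl$ progressive, hence optional'' is not a valid deduction --- the progressive $\sigma$-algebra strictly contains the optional one in general, and you offer no reason for $\tilde\ctrl$ to land in $\ooo$. What actually saves the argument is that $\Psi$ pulls back Borel sets of $[0,T]\times\Omega$ into the raw \emph{predictable} $\sigma$-algebra (this is the easy direction of DM~IV.97), so $\ctrl^0\circ\Psi$ is $\MF^0$-predictable, a fortiori $\MF$-optional --- but only after first replacing $\ctrl$ by a jointly Borel version $\ctrl^0$, because $\ctrl$ is merely $\bbb_T\otimes\MF$-measurable and $\pi_s$ is \emph{not} $\MF^0_s/\MF$-measurable: a $\P$-null set need not pull back under $\pi_s$ to an $\MF^0_s$-set. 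That is exactly the singularity you flag in your last paragraph, and it contradicts the $\MF^0_s/\MF$-measurability you assert in your first. In the necessity direction, the crux --- passing from the $\MF$-optional $\ctrl$ to an $\MF^0$-optional modification --- is attributed to ``measurable selection in the spirit of Stricker--Yor,'' but that is the wrong tool: Stricker--Yor concerns measurable dependence on an auxiliary parameter $u$, not the reduction from the augmented filtration to the raw one. The tool actually needed is DM~IV.78 together with $\ooo=\cpp$, which the paper cites explicitly; with those in place your monotone class argument on the raw predictable $\sigma$-algebra is fine. Finally, the closing step ``since $\ctrl=\tilde\ctrl$ off a $\P$-null set, the causality holds $\P$-a.s.\ for $\ctrl$'' needs $\ctrl$ and $\tilde\ctrl$ to also agree at $\pi_t(\omega)$, which the null-set identification does not give (the obstruction you name but do not resolve); the paper's own proof makes the same leap, and the lemma is most honestly read as an indistinguishability statement, which both arguments do establish once the above is repaired.
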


\begin{proof}
Let $(\MF^B_t)$ be the raw filtration generated by the canonical process $B.$ Then according to \cite[Theorem IV.78 and Remark IV.74]{DM1978}, we see that $(\MF_t)$-predictable processes are indistinguishable from $(\MF^B_t)$-predictable ones.

On the other hand, since $(\MF_t)_t$ is augmented from $(\MF^B_t)_t$, by \cite[p.30 Proposition and p.31 Example]{chung13}, which shows every $(\MF_t)_t$-stopping time\footnote{Note that in that book, stopping times are called optional times.} is a  $(\MF_t)_t$-predictable stopping time (so the optional sigma algebra agrees with the predictable sigma algebra), we see that any optional process $\ctrl(\cdot)$ is indistinguishable from a $(\MF^B_t)$-predictable process. 

Finally, according to \cite[p.147 Theorem 97]{DM1978}, which claims that $\eta$ is $(\MF^B_t)$-predictable if and only if $\ctrl_t(\ome)= \ctrl_t(\ome_{t \wedge .})$, we see our claim follows.
\end{proof} 


The following lemma, adapted from \cite[Section 3.2]{NT13} and \cite[Proposition 4]{CTT16}, provides convenient presentation of the value function, which is used to prove the ``harder'' part of the DPP. 

\begin{lem}\label{equiv-vf}
	Suppose that Assumption \ref{assum-dpp} holds. Let $t \in [0,T)$, \(y\in \R^{d_Y}\) and \(\BX\in \Ca_T\). Put
\be
\MA^t:=\{\ctrl \in \MA \ \Big| \ \ctrl \text{ independent of $\MF_t$ under }\P \},
\ee	
\be\label{vf2}
\tilde \MV (t,y; \BX ) := \mathrm{inf}_{\ctrl  \in {\mathcal{A}^t} } J (t,y, \ctrl; \BX ),
\ee
where $J$ is given by \eqref{cost-rp}.
Then we have
\be
\MV(t,y;\BX)=\tilde \MV(t,y;\BX).
\ee	
	
	
\end{lem}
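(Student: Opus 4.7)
The plan is to prove both inequalities separately. The containment $\MA^t \subset \MA$ immediately yields $\MV(t,y;\BX) \le \tilde\MV(t,y;\BX)$. For the non-trivial direction, I aim to construct, given $\ctrl \in \MA$, a family $\{\tilde\ctrl^{\bar\omega}\}_{\bar\omega \in \Omega} \subset \MA^t$ satisfying
\[
	J(t,y,\ctrl;\BX) = \int_\Omega J(t,y,\tilde\ctrl^{\bar\omega};\BX)\,\P(d\bar\omega).
\]
This will give $\tilde\MV(t,y;\BX) \le J(t,y,\ctrl;\BX)$, and infimizing over $\ctrl \in \MA$ yields the missing direction.

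By \cref{lem:op}, we may assume $\ctrl_r(\omega) = \ctrl_r(\omega_{\cdot \wedge r})$. Decompose the canonical path as $\omega = \bar\omega \oplus_t \omega'$ with past $\bar\omega := \omega_{\cdot \wedge t}$ and future increments $\omega'_s := (\omega_s - \omega_t)\mathbf{1}_{\{s \ge t\}}$. Under Wiener measure these are independent, with $\bar\omega$ generating $\cff_t$ modulo null sets and $\omega'$ a Brownian motion on $[t,T]$ starting at $0$. Define the shifted control by
\[
	\tilde\ctrl^{\bar\omega}_r(\omega) := \ctrl_r\bigl((\bar\omega \oplus_t \omega)_{\cdot \wedge r}\bigr) \qquad \text{for } r \ge t,
\]
extended by an arbitrary constant $a_0 \in A$ for $r < t$. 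For each fixed $\bar\omega$, $\tilde\ctrl^{\bar\omega}$ depends on $\omega$ only through the increments $(\omega_s - \omega_t)_{s \in [t,r]}$, hence is optional and independent of $\cff_t$; thus $\tilde\ctrl^{\bar\omega} \in \MA^t$.

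Under \cref{assum-dpp} the coefficients $(b,\sigma,f,f',\ell)$ are non-random and the starting data $(t,y)$ deterministic. Consequently, the solution $Y^{t,y}(\ctrl,\BX)$ of \eqref{c-rsde} on $[t,T]$ depends on $B$ only through the increments $(B - B_t)|_{[t,T]}$ (via the It\^o integral $\int_t^\cdot \sigma\, dB$) and through the values of the control. Pathwise uniqueness from \cref{thm:wellposed} then forces, $\P$-a.s.,
\[
	Y^{t,y}(\ctrl,\BX)(\omega) = Y^{t,y}(\tilde\ctrl^{\bar\omega(\omega)},\BX)(\omega'(\omega)),
\]
since both sides solve the same RSDE on $[t,T]$ driven by $\omega'$ with control $\tilde\ctrl^{\bar\omega}$. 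Applying $g$ to the terminal value, adding the integrated running cost $\int_t^T \ell$, and integrating against the product structure $\P = \mathrm{Law}(\bar\omega)\otimes\mathrm{Law}(\omega')$ via Fubini yields the displayed identity for $J$, completing the argument.

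The main obstacle is the RSDE separation identity in the preceding step. It rests on the fact that, for each fixed $\bar\omega$, the control $\tilde\ctrl^{\bar\omega}$ falls within the class treated by \cref{thm:wellposed}, so the RSDE admits a unique solution, and pathwise uniqueness forces the two resulting solutions to coincide because all deterministic inputs are shared while the only random input enters exclusively through $\omega'$ on both sides. A secondary technicality is the measurability of $\bar\omega \mapsto J(t,y,\tilde\ctrl^{\bar\omega};\BX)$ needed to justify Fubini; this follows from the measurable-selection machinery of \cref{sec:measurable_prelim}, notably \cref{thm-rsde-optional}.
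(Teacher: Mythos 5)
Your proof follows essentially the same route as the paper's: both invoke \cref{lem:op} to write the control as a measurable function of the past $\omega_{\cdot\wedge t}$ and the Brownian increments on $[t,T]$, freeze the past to obtain a family of controls independent of $\cff_t$, and conclude via Fubini that $J(t,y,\eta;\BX)$ is an average of $J(t,y,\tilde\eta^{\bar\omega};\BX) \ge \tilde\MV(t,y;\BX)$. You spell out the RSDE separation identity and the measurability needed for Fubini in somewhat more detail than the paper (which simply appeals to independence of increments and Fubini), but the argument is the same.
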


We leave the proof of the above lemma for later and present the DPP for rough stochastic control problem.

\begin{thm}\label{thm-RBP}
(Rough DPP) Let $\BX \in \Ca_T$ and let Assumption \ref{assum-dpp} be in force. 
Then, for  $0 \le s \le s+h \le T$,
\be\label{eq:rdpp}
         \mathcal V (s,y; \BX) =   \inf_{\ctrl  \in \mathcal{A}} \E  \left(  \mathcal V(s+h,Y^{(s,y,\ctrl, \BX)}_{s+h}; \BX) +  \int_s^{s+h} \ell (t, Y_t^{(s,y,\ctrl, \BX)}, \eta_t) d t  \right).
\ee

\end{thm}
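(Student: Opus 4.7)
The plan is to establish the two inequalities $\MV(s,y;\BX)\ge \mathcal{W}(s,y;\BX)$ and $\MV(s,y;\BX)\le \mathcal{W}(s,y;\BX)$ separately, writing $\mathcal{W}(s,y;\BX)$ for the right-hand side of \eqref{eq:rdpp}; only the upper-bound direction requires genuine work.

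For the lower bound, fix any $\eta\in\MA$. Pathwise uniqueness (\cref{thm:wellposed}) together with the measurable version of the solution map supplied by \cref{thm-rsde-optional} yields the flow identity $Y^{(s,y,\eta,\BX)}_t = Y^{(s+h,\, Y^{(s,y,\eta,\BX)}_{s+h},\, \eta, \BX)}_t$ for $t\in[s+h,T]$, almost surely. Split the cost at time $s+h$ and take regular conditional expectation with respect to $\MF_{s+h}$; the restriction of $\eta$ to $[s+h,T]$ remains an admissible control for the time-shifted problem, so
\[
\E\Big( g\big(Y^{(s,y,\eta,\BX)}_T\big) + \int_{s+h}^T \ell(r,Y_r,\eta_r)\,dr \,\Big|\, \MF_{s+h}\Big) \ge \MV\big(s+h,\,Y^{(s,y,\eta,\BX)}_{s+h};\,\BX\big) \qquad \P\text{-a.s.}
\]
Adding back the running cost on $[s,s+h]$, taking expectations, and then passing to $\inf_\eta$ gives $\MV\ge\mathcal{W}$.

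For the upper bound, fix $\varepsilon>0$ and an arbitrary test control $\eta^{1}\in\MA$. By \cref{thm:RoughValueReg}, $y\mapsto \MV(s+h,y;\BX)$ is continuous and $y\mapsto J(s+h,y,\eta;\BX)$ is continuous uniformly in $\eta$, so I choose a countable Borel partition $\{B_k\}_k$ of $\R^{d_Y}$ with representatives $y_k\in B_k$ on which both oscillations are below $\varepsilon$. By \cref{equiv-vf}, pick for each $k$ some $\eta^{2,k}\in\MA^{s+h}$ (independent of $\MF_{s+h}$) with $J(s+h,y_k,\eta^{2,k};\BX)\le \MV(s+h,y_k;\BX)+\varepsilon$, and paste these into
\[
\bar{\eta}_t(\omega) := \eta^{1}_t(\omega)\,\mathbf{1}_{\{t<s+h\}} + \sum_k \eta^{2,k}_t(\omega)\,\mathbf{1}_{\{t\ge s+h\}}\,\mathbf{1}_{\{Y^{(s,y,\eta^{1},\BX)}_{s+h}\in B_k\}}.
\]
Optionality of $\bar{\eta}$ follows from \cref{lem:op} and the $\MF_{s+h}$-measurability of the indicator factors. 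Splitting the cost of $\bar{\eta}$ at $s+h$ and using independence of each $\eta^{2,k}$ from $\MF_{s+h}$, the inner conditional expectation on $\{Y^{(s,y,\eta^{1},\BX)}_{s+h}\in B_k\}$ reduces to $J(s+h,Y^{(s,y,\eta^{1},\BX)}_{s+h},\eta^{2,k};\BX)$, which by the continuity chain and $\varepsilon$-optimality is bounded above by $\MV(s+h,Y^{(s,y,\eta^{1},\BX)}_{s+h};\BX)+O(\varepsilon)$. Since $\MV(s,y;\BX)\le J(s,y,\bar{\eta};\BX)$, passing to $\inf_{\eta^{1}}$ and then $\varepsilon\to 0$ yields the required upper bound.

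The main obstacle is this pasting step: producing a single \emph{optional} control $\bar{\eta}$ from the countable family $\{\eta^{2,k}\}$ whose dynamics on $[s+h,T]$ genuinely decouple from $\MF_{s+h}$ given $Y_{s+h}$. This is precisely what \cref{equiv-vf} (which permits restricting to $\MF_{s+h}$-independent controls without changing the value) and the causal characterisation of optional processes in \cref{lem:op} are designed to supply, and it is why the whole setup is carried out on the canonical Wiener space.
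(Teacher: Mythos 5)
Your proposal is correct and follows essentially the same route as the paper's full proof (the ``2nd variant''): the easy inequality via the flow/tower-property argument on the canonical space, and the hard inequality via a continuity-driven Borel partition of $\R^{d_Y}$, $\varepsilon$-optimal controls drawn from $\MA^{s+h}$ via \cref{equiv-vf}, and the pasting construction whose optionality rests on \cref{lem:op}. The one place you are slightly less careful than the paper is the lower bound: the assertion that ``the restriction of $\eta$ to $[s+h,T]$ remains admissible for the time-shifted problem'' is exactly the point where the paper invokes \cref{lem:op} to exhibit the $\omega$-frozen control $\eta^\omega_t(\omega') := \eta(t, B^{s+h,\omega}_{t\wedge\cdot}(\omega'))$ explicitly as an element of $\MA$, which is what makes \eqref{eq:r-markov} legitimate; you should cite \cref{lem:op} there rather than \cref{thm-rsde-optional}, which concerns measurability in $\BX$ and is not needed here since $\BX$ is fixed.
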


\begin{proof}
Denote by $\tMV(s,y;\BX)$ the right hand side of \eqref{eq:rdpp}. We first offer a simpler proof under the assumption (a.1) that the initial datum and coefficients of our controlled RSDE \eqref{c-rsde} do {\em not} depend on $\BX$, which also rules out controlled $(f,f')$. In this case our regularity estimate \eqref{equ:cRSDEestimate} implies that the rough value function depends continuous on $\BX$. Assuming furhtermore (a.2) that $\BX$ is a geometric rough path, it is clear that the rough value function is the limit of value functions of classical stochsastic control problems. 

\smallskip

(Simplified proof) [Assuming (a.1),(a.2).] This proof uses validity of the 
 (classical) DPP in the case when $\BX$ is replaced by smooth $X$, and use continuity of $\mathcal{V}=  \mathcal V (s,y; \BX)$ in the rough path argument $\BX$, as follows from  \cref{thm:RoughValueReg} (cf. Remark \ref{rem:cont_val}). 
As before, we may
 assume $\ell = 0$ without loss of generality. 
 By definition, a geometric rough path $\BX$ is the limits of smooth paths.
That is, $\rho_\alpha (\BX,\BX^\varepsilon) \to 0$ as $\varepsilon \to 0$, where $\BX^\varepsilon$ 
is the canonical rough path lift of smooth $X^{\varepsilon}$. 
Set $\mathcal{V}^\varepsilon (s,y) := \mathcal V (s,y; \BX^\varepsilon)$ and similarly for 
$\tilde{\mathcal{V}}^\varepsilon$. 
For fixed $\varepsilon > 0$ we are in a classical setting 
(e.g. \cite{krylov2008controlled, SA21}) 
so that 
$\mathcal{V}^\varepsilon = \tilde{\mathcal{V}}^\varepsilon$, for any $\epsilon > 0$. 
By  \cref{thm:RoughValueReg}, we know $\mathcal{V}^\varepsilon \to \mathcal{V}(\cdot, \cdot; \BX)$ so it suffices to see the corresponding statement for $\tilde{\mathcal{V}}^\varepsilon.$ 
But this is straight-forward, using the elementary $| \inf A  - \inf B |  \le \sup | A - B |$, continuous dependence of $Y(\ctrl, \BX)$ in $\BX$, joint continuity of $V(s,y;\BX)$ in $(y,\BX)$, boundedness of $V$, and bounded convergence. This justifies passage to the limit, and the rough DPP is established.\\

(Full proof) Now fix any $\BX \in \Ca_T$, and we omit \(\BX\) in the notation, writing  $Y^{s,y, \ctrl} = Y^{(s,y,\ctrl,\BX)}$ accordingly. 
This argument does not require continuity of the value function in $\BX$, hence also applicable to suitable $X$-controlled vector fields $(f,f')$. We do need however continuity of the value function in the time-space argument $(s,y)$, as follows immediately from
our estimate \eqref{equ:cRSDEestimate} applied with $t, \bar{t}, y, \bar{y}$, and identical data ($b, \sigma, \ell, g, \BX, f,f'$) otherwise. The following argument is based on the pseudo-Markov property of controlled diffusion (see \cite{CTT16} for details) and the above continuity.

``$\ge$'': By the definition of $\MV$ and Lemma \ref{lem:op}, for any $\vep>0,$ there exists $\ctrl^{\vep}_t(\ome)= \ctrl^{\vep}(t,B(\ome)_{t\wedge .}) \in \MA$ such that 
$$ 
\MV(s,y) > J(s,y,\ctrl^\vep)-\vep.
$$
On the other hand, note that for any $\eta_t(\ome)=\eta(t, B_{t\wedge .}(\ome)) \in \MA,$ we have $ \P$-a.s.
\be\label{eq:r-markov}
 \E \left[ g(Y_T^{s+h, Y_{s+h}^{s,y,\eta}, \eta }) + \int_{s+h}^T \ell(r,Y^{s,y,\eta}_r, \eta_r) dr \Big| \MF_{s+h} \right](\ome) =J(s+h,Y^{s,y,\eta}_{s+h}(\ome), \eta(\ome;\cdot)),
\ee 
where for almost surely $\ome \in \Omega,$ $  \eta_t(\ome;\ome'):= \eta(t, B^{s+h, \ome}_{t \wedge .}(\ome')),\ \text{for any } (t,\ome') \in [0,T] \times \Omega,$ with  
$$
B^{s+h, \ome}_{r}(\ome')=\left\{
\begin{array}{ll}
	 B_{r}(\ome), & r\le s+h,\\
	 B_{r}(\ome')-B_{s+h}(\ome')+ B_{s+h}(\ome), & r >s+h.
\end{array}
\right.
$$    
According to Lemma \ref{lem:op}, we see that $ \eta(\ome;\cdot) \in \MA.$ 
Then by taking $\ctrl=\ctrl^{\vep}$ in \eqref{eq:r-markov} and the tower property of conditional expectation, 
\begin{equation}
	\begin{split}
	J(s,y,\ctrl^\vep) \ge \E \left[ \MV(s+h,Y_T^{s,y,\ctrl^\vep}) + \int_s^{s+h} \ell(r, Y^{s,y,\ctrl^\vep}_r, \ctrl_r)  dr \right],
\end{split}
\end{equation}
which implies $\MV(s,y)> \tMV(s,y)-\vep.$

``$\le$'': For any $\vep >0,$ in view of the uniform continuity of $\MV$ in the space variable, i.e. \eqref{equ:cRSDEestimate}, there exists $\delta>0$ and a sequence of balls $\{B_i \}_{i\ge 1}$ covering $\R^{d_Y}$, such that for any $y,y'\in B_i$ and $t > 0,$
\begin{equation}\label{close-vf}
	|\MV(t,y)-\MV(t,y')|<\vep.
\end{equation}
Moreover, in view of the continuity of $Y^{s,y,\ctrl}$ in $y\in \R^{d_Y}$ uniformly on $\eta \in \MA,$ i.e. Theorem \ref{thm:stabilityforRSDEs_compendium}, we have 
\be
\left\| \sup_{t\in[0,T]} |Y_t^{t,y,\ctrl} - Y_t^{t,y',\ctrl}| \right\|_2 \lesssim |y-y'|,
\ee
where the implicit constant depends only on ${\nn{\BX}}_{\alpha}$ and $(b,\sigma,f,f').$ Thus by Lipschitzness of $(g,\ell),$
we can choose $\{B_i \}_{i\ge 1},$ such that for any $y,y'\in B_i$,
\begin{equation}\label{close-cost}
	 \E\left[|g(Y_T^{t,y,\ctrl})- g(Y_T^{t,y',\ctrl})|+ \int_t^T |\ell(r,Y_r^{t,y,\ctrl}, \ctrl_r)-\ell(r,Y_r^{t,y',\ctrl}, \ctrl_r)|dr  \right] <\vep.
\end{equation}
Then it is standard to obtain a Borel partition of $\R^{d_Y}$, denoted by $ \{\Gamma_i\}_{i\ge 1}$ with some $y^i \in \Gamma_i$ and $\Gamma_i \cap \Gamma_j= \emptyset $ for any $i \neq j$, such that \eqref{close-vf} and \eqref{close-cost} hold for any $y \in \Gamma_i$ and $y'=y^i.$
For any $y^i$, according to Lemma \ref{equiv-vf}, there exists $\eta^{\vep,i} \in \MA^{s+h},$ such that 
\begin{equation}\label{rep-vf}
	\MV(s+h,y^i) \ge J(s+h,y^i, \ctrl^{\vep,i})- \vep.
\end{equation}
Now for any $\eta \in \MA,$ let 
\begin{equation}
	{\bar{\ctrl}}^\vep_t(\ome):=\left\{
	\begin{array}{ll}
	\ctrl^{\vep,i}_t(\ome), & t > s+h, \ Y^{s,y,\ctrl}_{s+h}(\ome) \in \Gamma_i,\\
	\ctrl_t(\ome), & t\in [s,s+h].
\end{array}
	\right.
\end{equation}
According to \eqref{equiv-vf} and \eqref{rep-vf}, we have 
\begin{equation}\label{appro-vf}
	\begin{split}
		\MV(s+h, Y^{s,y,\ctrl}_{s+h}) & \ge \sum_{i} \MV(s+h,y^i) 1_{\{Y^{s,y,\ctrl }_{s+h} \in \Gamma_i \}}- \vep\\
		& \ge  \sum_{i} J(s+h,y^i, \ctrl^{\vep,i}) 1_{\{Y^{s,y,\ctrl }_{s+h} \in \Gamma_i \}}- 2\vep.
	\end{split}
\end{equation}
On the other hand, since $\ctrl^{\vep,i} \in \MA^{s+h}$ is independent of $\MF_{s+h}$ which implies $Y^{s+h,y^i,\ctrl^{\vep,i}}$ as well is, we have 
\begin{align*}
	&	\E \left[\sum_{i} J(s+h,y^i, \ctrl^{\vep,i}) 1_{\{Y^{s,y,\ctrl }_{s+h} \in \Gamma_i \}} \right] \\
	 = & \ \E\left[ \sum_i g(Y_{T}^{s+h,y^i,\ctrl^{\vep,i} }) 1_{\{Y^{s,y,\ctrl }_{T} \in \Gamma_i \}} + \sum_i \int_{s+h}^T \ell(r,Y_r^{s+h,y^i, \ctrl^{\vep,i}}, \ctrl^{\vep,i}_r )1_{\{Y^{s,y,\ctrl }_{s+h} \in \Gamma_i \}} dr \right]\\
	 \ge & \ \E \left[ g(Y_T^{s+h,Y^{s,y,\ctrl}, \bctrl^\vep}) + \int_{s+h}^T \ell(r,Y_r^{s+h,Y^{s,y,\ctrl}, \bctrl^\vep}, \bctrl^{\vep}_r ) dr \right] - \vep\\
	  = & \ \E \left[ g(Y_T^{s,y,\bctrl^\vep}) + \int_{s+h}^T \ell(r,Y_r^{s,y,\bctrl^\vep}, \bctrl_r^{\vep}) dr \right]-\vep,
\end{align*}
where we apply \eqref{close-cost} in the last inequality.
Combining with \eqref{appro-vf}, we have
\begin{equation}
\begin{split}
	& 	\E \left[ \MV(s+h,Y^{s,y,\ctrl}_{s+h}) + \int_{s }^{s+h} \ell(r,Y^{s,y,\ctrl}_r, \ctrl_r) dr \right]\\
	& \ge  \ \E \left[ g(Y_T^{s,y,\bctrl^\vep}) +  \int_{s }^{T} \ell(r,Y_r^{s ,y, \bctrl^\vep}, \bctrl^{\vep}_r ) dr \right] - 3 \vep \\
	& \ge \MV(s,y)-3 \vep, 
	\end{split}
\end{equation}
which implies $\tMV(s,y) \ge \MV(s,y)-3\vep$ and so our claim follows.
\end{proof}

\begin{proof}[Proof of Lemma \ref{equiv-vf}] These ideas being standard, we only provide a sketch. (Also see \cite[Proposition 4]{CTT16} for another proof, 
   noting that the authors deal with predictable controls, which is equivalent to our setting, as pointed out in the proof of Lemma \ref{lem:op}.) 
Fix $(s,y) \in [0,T] \times \R^{d_Y}$ and omit \(\BX\) in notations. 
We only need to show $\MV(s,y) \ge \tMV(s,y) .$ Indeed, according to Lemma \ref{lem:op}, any $\ctrl \in \MA$ is of the form $\ctrl= \ctrl_t( \ome_{t \wedge .} )$ and hence for any $t\ge s,$ we can write 
$
		\ctrl_t(\ome)= \Phi(t, \ome_{s \wedge .  } , (\ome_{r} - \ome_s )_{s\le r \le t} ),	 
$
for a suitable $\Phi$. By independence of Brownian increments, for any fixed $\ome_{s\wedge .} \in \Omega |_{[0,s]},$ $\ctrl^{s,\ome}_t:=\Phi(t, \ome_{s \wedge .  }, \cdot-\ome_s): \Omega_{[s,t]} \rightarrow U$ is independent of $\MF_s$ and thus belongs to $\MA^s.$
Then by the Fubini theorem, we have
\begin{equation}
	\begin{split}
		J(s,y,\ctrl) & = \int \E \Big[ g(Y_T^{s,y,\ctrl^{s,\ome}}) + \int_s^T \ell(r,Y_r^{s,y,\ctrl^{s,\ome}}, \ctrl^{s,\ome}_r ) \Big] d \P(\ome_{s\wedge .})\\
		& \ge \int \tMV(s,y)  d \P(\ome_{s\wedge .})= \tMV(s,y),
	\end{split}
\end{equation}
which implies the claim. 
\end{proof} 

We remark that our proof for the DPP,  in particular the first variant, extends in a straight-forward way to the case of 
 two-player, zero-sum stochastic differential games \cite{FS89}.

\subsection{Explicit example} The following example is inspired by \cite{BM07} where the authors consider a linear, scalar situation ($d_X = d_Y =1$),
which allows for pathwise solutions. We here consider general dimensions with linear $f: \R^{d_Y} \rightarrow \mathrm{Lin}(\R^{d_X}, \R^{d_Y})$, zero It\^o vector fields $b,\sigma$, so that \eqref{rand-rsde} is a genuine controlled rough differential equations, of the form
$$
\begin{array}{ll}
  & dY^{\eta}_t = \eta_t dt + f (Y^{\eta}_t) d\BX_t, \quad t \in [s,
  T],\\
  & Y_s^{\eta} = y.
\end{array}
$$
Call $P^{\BX}_{t \leftarrow s}: \R^{d_Y} \to \R^{d_Y}$ the (linear) solution flow to the RDE solution flow of
this equation in uncontrolled case ($\eta \equiv 0$). Then controlled process
then can be given in mild formulation,
$$   Y_T^{(s, y, \eta , \BX)} = P^{\BX}_{T \leftarrow s} 
  \hspace{0.17em} y + \int_s^T P^{\BX}_{T \leftarrow r} \eta_r dr, \qquad Y_T^{(0, 0 , \eta , \BX)} = \int_0^T P^{\BX}_{T \leftarrow
   r} \eta_r dr.
$$
Assume now that $| \eta | \leqslant 1$, fix $v \in \mathbb{R}^{d_Y}$ and
consider distance-to-$1$ cost $| \Pi_T^{\eta} - 1|$, where
\[ {\Pi_T^{\eta}}  : = \langle v, Y_T^{ (0, 0 , \eta , \BX) } \rangle =
   \int_0^T \langle \Theta_r, \eta_r \rangle dr, \qquad \Theta_r : =
   (P^{\BX}_{T \leftarrow r})^{\star} v .\]
Clearly ${\Pi_T^{\eta}}  \leqslant M_T^{\BX} \assign \int_0^T |
\Theta_r | d r$ with equality when $\eta_r = \hat{\Theta}_r \assign \Theta_r /
| \Theta_r |$. If $M_T^{\BX} > 1$ we overshoot the target $1$ and the
optimal control (zero cost) is plainly given by $\eta^{\star}_r =
\hat{\Theta}_r / M_T^{\BX}$, thanks to linearity of ${\Pi_T^{\eta}} $
in $\eta$. If $M_T^{\BX} \leqslant 1$ the best we can is full speed
ahead, i.e. $\eta^{\star}_r = \hat{\Theta}_r$ which results in a cost $(1 -
M_T^{\BX})$.

Summarizing, we have
\[ \mathcal{V} (0, 0, \BX) \assign \inf_{\eta \in \mathcal{A}} |
   \langle v, Y_T^{(0, 0 , \eta , \BX)} \rangle - 1| = [1 -
   M_T^{\BX}]^+ \]
with optimal control
\[ \gamma_t^{\star} = \left\{ \begin{array}{ll}
     \hat{\Theta}_r / M_T^{\BX} & \text{on } \{ M_T^{\BX} >
     1 \},\\
     \hat{\Theta}_r & \text{on } \{ M_T^{\BX} \leq 1 \} .
   \end{array} \right. \]
   
Remark that $\Ca_T \ni \BX \mapsto M^{\mathbf{X}}$ is $\BCa_T$-measurable, which (trivially) implies that the process $ \gamma^{\star}$ is $\BCa_T$-optional, but certainly not causal in $\BX$.

\subsection{Joint measurability of $\eps$-minimizers}

To conclude the current section, we provide a result on the $\vep$-approximation of the optimal control to the rough stochastic control problem, which turns out to be quite helpful for the pathwise control problem considered in  \cref{sec:pathwise-control}. As before, we assume without loss of generality zero running cost $\ell$.


\begin{lem} \label{lem1} 
Let $\BX \in \Coa$, and
suppose Assumption \ref{assum-dpp} holds. Recall that $\mathcal{A}$ is the class of optional (in sense of $\ooo / \aaa$-measurable) controls and that
  \[   
  \mathcal V (s,y; \BX ) = \mathrm{inf}_{\ctrl  \in {\mathcal{A}}} \E\left[ g(Y_T^{(s,y,\ctrl, \BX)}) \right] 
  \]
where $Y^{(s,y,\ctrl,\BX)}$ is the solution to \eqref{c-rsde}. Then for every $\varepsilon > 0$, there exists a 
$\BCoa_T$-optional map $\hat{\eta} = \{
  \eta^{\varepsilon}_t (\ome, \BX) : 0 \leqslant t \leqslant T \}$ such
  that
  \[ 
 \mathcal V (s,y; \BX )) \leqslant \mathbb{E} [g (Y^{(s,y,\hat \ctrl,\BX)}_T)] \leqslant \mathcal V (s,y; \BX ) + \varepsilon.
   \]
    As a consequence, $\mathcal{V}$ is also the value function when $\mathcal{A}$ is replaced by the class of $\BCoa_T$-optional controls. 
\end{lem}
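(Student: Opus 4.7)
The statement asks for a joint measurability upgrade of the elementary fact that, for every fixed $\BX$, the infimum defining $\mathcal{V}(s,y;\BX)$ is an infimum and thus can be approached by choosing an $\varepsilon$-minimizer from $\mathcal{A}$. The natural route is a piecewise construction on a countable Borel partition of $\MC_T = \mathscr{C}^{0,\alpha}_T$, leveraging the continuity of the value and of the cost with respect to the rough path, together with separability of $\MC_T$.

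First, I would exploit \cref{thm:RoughValueReg}: applied with identical data and only the rough path varying, estimate \eqref{localest-costfct} gives continuity of $\BX\mapsto J(s,y,\eta;\BX)$ that is \emph{uniform in} $\eta\in\mathcal{A}$ on any bounded subset $\{\nn{\BX}_\alpha\le N\}$ (the hidden constant depending only on bounds of $(b,\sigma)$, the controlled vector field $(f,f')$, and $N$, all of which are uniform in $\eta$ by \cref{assum-dpp}). The same theorem gives continuity of $\BX\mapsto\mathcal{V}(s,y;\BX)$. Hence for each $\BX_0\in\MC_T$ there exists an open ball $B(\BX_0,\delta(\BX_0))$ on which
\[
\sup_{\eta\in\mathcal{A}}|J(s,y,\eta;\BX)-J(s,y,\eta;\BX_0)|<\varepsilon/3,\quad |\mathcal{V}(s,y;\BX)-\mathcal{V}(s,y;\BX_0)|<\varepsilon/3.
\]
Since $\MC_T$ is Polish hence Lindel\"of, extract a countable subcover $\{B(\BX_i,\delta_i)\}_{i\ge1}$ and disjointify to obtain a Borel partition $\{V_i\}_{i\ge1}$ of $\MC_T$ with $V_i\subset B(\BX_i,\delta_i)$ and $\BX_i\in V_i$.

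Next, using the definition of the infimum for each fixed $\BX_i$, pick $\eta^i\in\mathcal{A}$ with $J(s,y,\eta^i;\BX_i)\le \mathcal{V}(s,y;\BX_i)+\varepsilon/3$. Define the selector by glueing,
\[
\hat\eta_t(\omega,\BX):=\eta^i_t(\omega)\quad\text{on }\{\BX\in V_i\},\qquad i\ge 1,
\]
which is $(\ooo\otimes\ccc_T)/\aaa$-measurable since each $\eta^i$ is $\ooo/\aaa$-measurable and each $V_i$ is Borel in $\MC_T$. For $\BX\in V_i$, a triangle inequality chain
\[
J(s,y,\hat\eta(\cdot,\cdot,\BX);\BX)=J(s,y,\eta^i;\BX)\le J(s,y,\eta^i;\BX_i)+\tfrac{\varepsilon}{3}\le\mathcal{V}(s,y;\BX_i)+\tfrac{2\varepsilon}{3}\le\mathcal{V}(s,y;\BX)+\varepsilon,
\]
together with the trivial lower bound $\mathcal{V}(s,y;\BX)\le J(s,y,\hat\eta(\cdot,\cdot,\BX);\BX)$ (each section $\hat\eta(\cdot,\cdot,\BX)$ belongs to $\mathcal{A}$), yields the announced sandwich.

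For the consequence, let $\mathcal{A}^\ast$ denote the class of $(\ooo\otimes\ccc_T)/\aaa$-measurable controls. A standard monotone class argument on the generators $E\times C$ (with $E\in\ooo$, $C\in\ccc_T$) shows that for every $\eta\in\mathcal{A}^\ast$ and every $\BX$, the section $\eta(\cdot,\cdot,\BX)$ lies in $\mathcal{A}$, so $\inf_{\eta\in\mathcal{A}^\ast}J(s,y,\eta;\BX)\ge\mathcal{V}(s,y;\BX)$. The reverse inequality follows by evaluating at the $\hat\eta\in\mathcal{A}^\ast$ constructed above and letting $\varepsilon\downarrow 0$. The only non-routine point of the argument is the uniform-in-$\eta$ continuity of $J$ in $\BX$, which I expect to be the main thing to verify carefully against \cref{thm:RoughValueReg}; the measurable-selection portion itself reduces to an elementary countable partition rather than invoking abstract machinery such as Jankov--von Neumann.
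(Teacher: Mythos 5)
Your proof is correct and follows essentially the same route as the paper's: uniform-in-$\eta$ continuity of the cost in $\BX$ via \cref{thm:RoughValueReg}, a Lindel\"of subcover of $\mathscr{C}_T$ disjointified into a Borel partition, and a glued selector obtained from fixed-$\BX_i$ $\varepsilon$-minimizers. The only cosmetic difference is that you spell out the $\varepsilon/3$ triangle-inequality bookkeeping and the section argument for the converse inclusion, which the paper leaves implicit.
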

\begin{rem}\label{g-dep-X}
If $g$ is bounded and continuous, so is the value function (w.l.o.g. with $\ell \equiv 0$) from the definition of $\mathcal{V}$. If the $\mathcal{V}(s,y;\BX)$ is furthermore continuous in $\BX$, the above lemma  applies directly with 
 $g (Y^{(s,y,\hat \ctrl,\BX)}_T)$ 
 replaced by $\mathcal V(s+h,Y^{(s,y,\ctrl, \BX)}_{s+h}; \BX)$, term seen on the right-hand side of \eqref{eq:rdpp}. 
\end{rem} 

\begin{proof}
As heuristic motivation, assume $\Coa_T$ is replaced by $\{ \mathbf{x}_1,
  \ldots ., \mathbf{x}_n \}$, with power $\sigma$-field ${\mathfrak{P}=
  2^{\{ \mathbf{x}_1, \ldots ., \mathbf{x}_n \}}}  .$ Let
  $\eta^{\varepsilon}_t (\ome, \mathbf{x }_i)$ be an $\ooo $-measurable $\varepsilon$-optimal control, for fixed $\mathbf{x }_i$.
  Then
  \[ \eta^{\varepsilon}_t (\ome, \BX) \assign \eta^{\varepsilon}_t (\ome,
     \mathbf{x }_i) \quad \text{when } \mathbf{X} = \mathbf{x}_i, 1
     \leqslant i \leqslant n, \]
  which is clearly $(\ooo \otimes \mathfrak{P})$-measurable.

 In the general case, employ continuity  (at $\mathbf{X}$): by \cref{thm:RoughValueReg}, for every
  $\varepsilon, \mathbf{X}$ have $\delta(\varepsilon, \mathbf{X}) >0$ such that for any $\bx\in \MC_T,$
  \[ d (\mathbf{x}, \mathbf{X }) < \delta (\varepsilon, \mathbf{X})\Rightarrow \sup_{\eta \in
     \mathcal{A}} \left(\mathbb{E} [g (Y^{\eta, \mathbf{X}}_T)] -\mathbb{E} [g
     (Y^{\eta, \mathbf{x}}_T)]\right) < \varepsilon . 
     \]
  Consider the open cover of $\Coa_T$ given by $\{ B (\mathbf{X},
  \delta (\mathbf{X}, \varepsilon)) : \mathbf{X} \in \Coa_T \}$. \
  By the Lindeloef property of Polish spaces, one may switch to a countable
  subcover, for some $\{\mathbf{x}_i\}_i \subset \Coa_T,$
  \[ B_i : = \{ B (\mathbf{x}_i, \delta (\varepsilon, \mathbf{x}_i)) : i
     \in \mathbb{N} \} . \quad \]
  We can turn this into a (Borel) measurable partition $\{P_i\}_i$ of $\Coa_T$
  by considering $P_1 = B_1, P_2 = B_2 \backslash B_1, \ldots .$, so that every $\mathbf{X} \in P_{\mathfrak{i} (\mathbf{X})}$ for some
  unique $\mathfrak{i} (\mathbf{X}) \in \mathbb{N}$. We can now define
\[ \eta^{\varepsilon}_t (\ome, \mathbf{X}) \assign \eta^{\varepsilon}_t
     (\ome, \mathbf{x }_{\mathfrak{i} (\mathbf{X})}), \quad \tmop{when }\ 
     \mathbf{X} \in P_{\mathfrak{i} (\mathbf{X})}, 
\]
  which is clearly $(\ooo \otimes \BCoa_T)$-measurable. 
\end{proof}
 
\section{Rough stability of Hamilton--Jacobi--Bellmann, revisited}\label{sec:HJB}

Let $\MV(s,y;\BX)$ be given by \eqref{c-rsde} and \eqref{vf-rsde11}.

%
If the (geometric) rough path $\BX$ is replaced by a continuously differentiable path $X$, so that $d X = \dot{X} d t$, it is
classical that the value function is the unique (in viscosity sense; see e.g. \cite{fleming2006controlled}) solution
to the HJB terminal value problem
\[ - \partial_t \MV = H (y, t, D \MV, D^2 \MV) + (f (t,y) \cdot D \MV) \dot{X}, \qquad \MV(T,\cdot) \equiv g, \]
with
\[ H (y, t, D \MV, D^2 \MV) = \inf_{\ctrl \in A} \left( \tfrac{1}{2} \mathrm{Tr} \left(
   (\sigma \sigma^T) \left(t, y, \ctrl \right) D^2 \MV \right) + b \left( t, y,
   \ctrl \right) \cdot D \MV \right). \]
In the general case, one expects a ``rough'' HJB equation of the form,
$$
 - d_t \MV = H (y, t, D \MV, D^2 \MV) dt  + (f (t,y) \cdot D \MV ) d \BX.
$$
The pragmatic interpretation of such an equation, point of view also taken in \cite{Friz2016}, is to view $\MV$ as limit of $\MV^\eps$, whenever $X^\eps \to \BX$ in rough path sense, provided $\MV$ only depends on $\BX$ and not on the approximating sequence. Such a construction was carried out in \cite{CFO11}, in case of autonomous coefficients fields, relying fundamentally on representing $\MV$ via a ``rough flow'' transformation, induced by the auxiliary RDE $- d \Psi= f (\Psi) d \BX$; the (formally) transformed equation 
can also serve as a definition, as pointed out in cf. \cite[Ch.13]{FH20} and references therein.\footnote{Most authors using flow transformation techniques consider autonomous coefficients, i.e. without $t$-dependence, but the extension is not difficult, provided one works with the correct backward flows.}  Equivalently, one formulates the transformation via the transport equation associated to the RDE, see \cite[Ch.12]{FH20} and \cite{CHT24}. See also \cite{BCO23} for a discussion how this relates to the original ``local'' transform of test functions.  

For the reader's convenience we restate the relevant result in our situation.
\begin{thm}[{\cite{CFO11}}]\label{thm51} Let $\BX \in \Coa_{g,T}$. Assume $b=b(y,\eta), \sigma=\sigma(y,\eta)$ are bounded Lipschitz in $y$, uniformly in $\eta$. Assume further  $f \in \C_b^{\gamma+2}$, with $\gamma > 1/\alpha$. Then the solution map $(\BX, g) \mapsto \MV$ is continuous, seen as map
from $\Ca_T {\times} \mathrm{BUC}(\R^{d_Y}) \mapsto \C_b([0,T]\times \R^{d_Y})$.
\end{thm}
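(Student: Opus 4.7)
\smallskip
\noindent\textbf{Proof proposal.} The plan is to derive \cref{thm51} directly from the rough stability estimate \eqref{equ:cRSDEestimate} of \cref{thm:RoughValueReg}, completely bypassing the flow transformation and viscosity machinery used in \cite{CFO11}. Since \cref{thm:RoughValueReg} already controls the joint behaviour of the rough value function under perturbations of \emph{all} of its data, the continuity claim for the map $(\BX,g)\mapsto v$ from $\MC^\alpha\times\mathrm{BUC}(\R^{d_Y})$ into $\C_b([0,T]\times\R^{d_Y})$ will be essentially a corollary.

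First I would check that for each fixed $\BX\in\MC^\alpha$ the value function $v(\cdot,\cdot;\BX)$ lies in $\C_b([0,T]\times\R^{d_Y})$. Boundedness is immediate from $|v(s,y;\BX)|\le |g|_\infty+T|\ell|_\infty$, and joint continuity in $(s,y)$ follows from \eqref{equ:cRSDEestimate} applied with identical data $(b,\sigma,f,f',\ell,g,\BX)$ on both sides, only the time--space argument varying: the right-hand side then collapses to $\tilde\lambda_g(|t-\bar t|^\alpha+|y-\bar y|)$, yielding a uniform modulus of continuity on $[0,T]\times\R^{d_Y}$.

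Next, given a convergent sequence $(\BX_n,g_n)\to(\BX,g)$ in the product topology, I would apply \eqref{equ:cRSDEestimate} with $b,\sigma,f,f',\ell$ kept identical and only $\BX,g$ varying, evaluated at the same $(t,y)$. Along the convergent sequence $\nn{\BX_n}_\alpha$ is uniformly bounded, so the parameter $M$ appearing in \cref{thm:stabilityforRSDEs_compendium} may be chosen independently of $n$; hence the implicit constant in \eqref{equ:cRSDEestimate} is $n$-independent. After a harmless symmetrisation in $g$ (swapping the roles of $g$ and $g_n$ and replacing $|(g-g_n)_+|_\infty$ by $|g-g_n|_\infty$), one obtains
\[
\sup_{(t,y)\in[0,T]\times\R^{d_Y}}|v_n(t,y)-v(t,y)|
\;\lesssim\;|g_n-g|_\infty
+\tilde\lambda_g\bigl(T^\alpha\,\rho_\alpha(\BX_n,\BX)\bigr),
\]
and both terms vanish as $n\to\infty$, using $\tilde\lambda_g(0^+)=0$ and the very definition of convergence in $\MC^\alpha\times\mathrm{BUC}$.

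The hypothesis $f\in\C_b^{\gamma+2}$ with $\gamma>1/\alpha$ is slightly stronger than what \cref{assum-dpp}(2) requires for mere well-posedness; the two extra derivatives are precisely what is needed so that $(D_yf,D_yf')$ itself satisfies the regularity condition entering \cref{thm:stabilityforRSDEs_compendium}, which in turn is what makes $M$ (and thus the implicit constant) uniformly controllable. The only real subtlety is the uniformity of the modulus across $(s,y)\in[0,T]\times\R^{d_Y}$, needed to upgrade pointwise convergence to convergence in the sup norm on the target space $\C_b([0,T]\times\R^{d_Y})$; but this uniformity is already built into the right-hand side of \eqref{equ:cRSDEestimate}, which is spatially uniform in the present configuration. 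Accordingly, the main anticipated ``obstacle'' is purely bookkeeping and the entire conceptual content is carried by \cref{thm:RoughValueReg}.
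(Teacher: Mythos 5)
Your argument is correct as a proof, but it is emphatically \emph{not} the paper's proof of \cref{thm51}. The paper proves \cref{thm51} by a one-line citation to \cite[Theorem~1]{CFO11}: the rough viscosity solution is constructed there by the flow transformation $-\,d\Psi=f(\Psi)\,d\BX$, and the $\C_b^{\gamma+2}$ regularity, via the so-called $\Phi^{(3)}$-invariant comparison, is what that method genuinely requires. The direct route you describe from the rough stability estimate \eqref{equ:cRSDEestimate} is exactly what the paper reserves for \cref{thm52}, which it then presents as an \emph{improvement}: weaker hypotheses ($f\in\C_b^\gamma$, time-dependent $b,\sigma$) and a stronger conclusion ($\mathrm{BUC}$, quantified, locally Lipschitz). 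In other words, you have reproduced the proof of \cref{thm52} and observed that it subsumes \cref{thm51}. That is indeed one of the main points of Section~5 and the cleaner way to present things, but it should be flagged as such rather than offered as ``the'' proof of \cref{thm51}.

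There is one substantive error in your commentary: you attribute the extra two derivatives in $f\in\C_b^{\gamma+2}$ to the condition ``$(D_yf,D_yf')\in\mathbf{D}_X^{2\beta}L_{p,\infty}\C_b^{\gamma-1}$'' appearing in \cref{thm:wellposed}/\cref{thm:stabilityforRSDEs_compendium}. This is false. For an autonomous (or Young-regular in time) $f$ one has $(f,0)\in\mathbf{D}_X^{2\beta}\C_b^{\gamma}$ and $(D_yf,0)\in\mathbf{D}_X^{2\beta}\C_b^{\gamma-1}$ as soon as $f\in\C_b^\gamma$; the controlled-in-time structure is trivial and the constant $M$ is controlled by $|f|_{\C_b^\gamma}$ alone. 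If those two extra derivatives were needed by the RSDE stability machinery you could not prove \cref{thm52} at all — and they are precisely what \cref{thm52} removes. They are a relic of the flow transformation, not a feature of the present approach.

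Two minor technical points worth making explicit. First, in the symmetrisation step, be aware that the statement of \cref{thm:RoughValueReg} carries $|(\bar g-g)_+|_\infty$ but its proof produces $|\bar g-g|_\infty$ before passing to $\tilde\lambda_g$; after the reduction to a common terminal condition the modulus is indeed $\tilde\lambda_g$ with $g$ the fixed one, so no issue arises. Second, if one ever does swap the roles of $g$ and $g_n$ (e.g.\ when quoting the one-sided form of the bound), the modulus $\tilde\lambda_{g_n}$ appears; this is still under control because $\tilde\lambda_{g_n}(\delta)\le\tilde\lambda_g(\delta)+2|g_n-g|_\infty$, which tends to $\tilde\lambda_g(\delta)$ under $\mathrm{BUC}$-convergence, but it deserves one line rather than silence.
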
 
\begin{proof}  This is \cite[Theorem 1]{CFO11}, noting that $f \in \C_b^{\gamma+2}$ implies the required $\Phi^{(3)}$-invariant comparison put forward in  \cite{CFO11}.
\end{proof}

We can now improve on this result in several ways.

\begin{thm} \label{thm52} Assume $b=b(t,y,\eta), \sigma=\sigma(t,y,\eta)$ are bounded Lipschitz in $y$, uniformly in the $t, \eta$. 
Assume further  $f \in \C_b^{\gamma}$, with $\gamma > 1/\alpha$. Then the  solution map $(\BX, g) \mapsto \MV$ is continuous, seen as map
from $\MC^\alpha_T {\times} \mathrm{BUC}(\R^{d_Y}) \mapsto \mathrm{BUC}([0,T]\times \R^{d_Y})$ and this continuity statement is fully quantifiable via estimate \eqref{equ:cRSDEestimate}. In particular, the map $(\BX, g) \mapsto \MV$ is locally Lipschitz continuous. 
\end{thm}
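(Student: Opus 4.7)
The plan is to recognise that the value function $v(s,y;\BX)$ in \eqref{vf-rsde1}--\eqref{c-rsde1} is a special case of the rough value function $\mathcal{V}(s,y;\BX)$ of \cref{DPP_for_RSDEs}, corresponding to deterministic bounded-Lipschitz coefficients $b, \sigma, \ell$ together with a deterministic vector field pair $(f,f')=(f,0)$. Under the hypotheses of the theorem one verifies \cref{assum-dpp} directly: parts (1) and (3) are immediate from the boundedness and Lipschitzness of $b,\sigma$ and from $g\in\mathrm{BUC}$; part (2) holds by taking any $p\ge 2$ and $\beta\in(0,\alpha]$ with $\alpha+\beta>1/2$ and $\alpha+(\gamma-1)\beta>1$ (for instance $\beta=\alpha$, which works since $\gamma>1/\alpha$ and $\alpha>1/3$). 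In the autonomous case $f=f(y)$ one trivially has $(f,0)\in\D_X^{2\alpha}\C_b^\gamma$; for genuinely $t$-dependent $f$, the mild Young-complementary time regularity from the example following \cref{assum-jt-measu}(2) is needed (or one keeps a genuine Gubinelli derivative $(f,f')$, which does not affect the argument below).

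Once this identification is in place, the theorem is essentially a direct reading of the stability estimate \eqref{equ:cRSDEestimate} of \cref{thm:RoughValueReg}. Applied with the same base point $(\bar t, \bar y)=(t,y)$ and identical It\^o data $(\bar b,\bar\sigma,\bar\ell,\bar f)=(b,\sigma,\ell,f)$, the quantity $F$ vanishes and one obtains
\[
|\bar v(t,y;\bar\BX)-v(t,y;\BX)|
\lesssim |(\bar g-g)_+|_\infty + \tilde\lambda_g\bigl(T^\alpha \rho_\alpha(\bar\BX,\BX)\bigr),
\]
with an implicit constant depending only on the prescribed data bounds and on $\nn{\bar\BX}_\alpha+\nn{\BX}_\alpha$. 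Crucially the right-hand side is independent of $(t,y)\in[0,T]\times\R^{d_Y}$, so sending $(\bar\BX,\bar g)\to(\BX,g)$ in $\MC^\alpha\times\mathrm{BUC}$ yields $|\bar v-v|_\infty\to 0$; swapping the roles of $(g,\bar g)$ supplies the reverse direction. Incorporating the remaining $|\bar t-t|^\alpha+|\bar y-y|$ contribution in \eqref{equ:cRSDEestimate} gives joint uniform continuity of $v$ on $[0,T]\times\R^{d_Y}$, hence $v\in\mathrm{BUC}([0,T]\times\R^{d_Y})$. When additionally $g\in\C_b^1$, the modulus $\tilde\lambda_g$ is linear, so the displayed estimate upgrades to a Lipschitz bound in $(\BX,g)$ with a constant depending only on the prescribed bounds on $\nn{\BX}_\alpha$ and $|g|_{\C_b^1}$.

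The main obstacle is conceptual rather than technical: by working intrinsically with the RSDE value function and appealing to the stability machinery of \cite{FHL21} packaged in \cref{thm:RoughValueReg}, one bypasses the rough flow transformation $-d\Psi=f(\Psi)d\BX$ underlying \cref{thm51}. That transformation relies on the $\Phi^{(3)}$-invariance condition of \cite{CFO11} and hence costs three additional derivatives of $f$, forcing $f\in\C_b^{\gamma+2}$; here the intrinsic controlled-rough-path structure of $f(Y)$ is used instead and $f\in\C_b^\gamma$ already suffices. Moreover, time-dependence of $b,\sigma$, absent from \cref{thm51}, is accommodated for free since progressive generality in the drift and diffusion was built into the RSDE theory from the outset.
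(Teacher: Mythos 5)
Your proposal is correct and follows exactly the intended argument: the paper's own proof is the single line ``Direct consequence of Theorem \ref{thm:RoughValueReg},'' and your elaboration --- identifying $v$ with the rough value function, verifying \cref{assum-dpp}, applying \eqref{equ:cRSDEestimate} with identical It\^o data so that $F=0$, and noting that the bound is uniform over $(t,y)$ --- is precisely the unpacking the authors had in mind (cf.\ \cref{rem:cont_val}). The concluding remarks on bypassing the flow transformation and on $\C_b^1$ data yielding a Lipschitz bound also match the discussion surrounding the theorem in the paper.
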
 


\begin{proof} Direct consequence of  \cref{thm:RoughValueReg}. 
\end{proof} 

In particular, in the case of interest when $\alpha = 1/2 - \eps$ (which covers the case when $\BX$ is a typical realization of Brownian motion enhanced to a rough path in the Stratonovich sense) the excessive regularity requirement $f \in \C_b^{4+\eps}$ of Theorem \ref{thm51} is replaced by $f \in \C_b^{2+\eps}$ in  Theorem \ref{thm52}, which is the natural condition (with $\eps = 0+$) one expects from Stratonovich SDE and transport SPDE theory. 

\begin{exam} We revisit \cite[Ex 3.17]{AC20}. The interest in doing so comes from the $(...)dB$ term which fits directly in our setup, while outside the scope of that work.\footnote{From \cite{AC20}: ``we expect all of the analysis [with $(...)dB$] to follow with appropriate technical adjustments.''} Consider the following controlled rough SDEs.
\begin{align*}
   d \left(\begin{array}{c}
      Y_t^{s, y, z, \eta, \BX }\\
      Z_t^{s, z, \eta, \BX }
    \end{array}\right) &= \left(\begin{array}{c}
      0\\
      \eta_t
    \end{array}\right) d t + \left(\begin{array}{c}
      Z_t^{s, z, \eta, \BX}\\
      0
    \end{array}\right) \sigma d B_t + \left(\begin{array}{c}
      Z_t^{s, z, \eta, \BX_t}\\
      0
    \end{array}\right) d \BX, 
    \\ \left(\begin{array}{c}
      Y_s^{s, y, z, \eta, \BX }\\
      Z_s^{s, z, \eta, \BX }
    \end{array}\right) &= \left(\begin{array}{c}
      y\\
      z
    \end{array}\right) ,
\end{align*}
where $\BX$ is a (scalar) rough path that we assume geometric. 
It is
implicit in this rough SDE that $Z ^{s, z, \eta, \BX}$ has zero
stochastic Gubinelli derviative, the stochastic rough integral $\int Z d
\BX$ then makes sense without the compensating terms (of the form
$\sum Z'_u \mathbb{X}_{u, v}$) i.e. converges directly as Riemann-Stieltjes
integral (so that this particular example can also be understood pathwise,
via integration by parts, exploiting finite variation of $Z ^{s, z, \eta,
\BX}$ ). 
Consider the following value function for the rough stochastic control problem
\[ \MV (s, y, z) = \sup_{\eta} \mathbb{E} \left[ Y_T^{s, y, z, \eta; \BX} - \int_s^T
   \varepsilon \eta_t^2  \hspace{0.17em} dt \right] . \]
An easy localization argument shows that this example fits in the setting of
Theorem \ref{thm52}. Replacing the rough path $\BX=(X,(\delta X)^2/2)$ by a smooth path $X$, the value function satisfies
\[ - d_t \MV = \frac{1}{2} z^2 \sigma^2  \frac{\partial^2 \MV}{\partial y^2} 
   \hspace{0.17em} dt + \frac{1}{4 \epsilon} \left( \frac{\partial \MV}{\partial
   z} \right)^2  \hspace{0.17em} dt + z \frac{\partial \MV }{\partial x} 
   \hspace{0.17em} d X, \qquad \MV (T, y, z) = y, \]
   the explicit solution of which is 
   $$ \MV (s, y, z) = y + (X_T - X_s) z + \frac{1}{4 \varepsilon} \int_s^T (X_T -
X_s)^2 d s.$$
In fact, this form of the value function remains valid in the rough case. Since  $\BX \in \mathscr{C}^{0,\alpha}_{g,T}$ is the limit of canonically lifted $X^\epsilon$, and the (explicit) value function $\MV=\MV^X$ is (obviously)  robust in $X$ (in the sense of continuiuty w.r.t. sup-topology on $[0,T]$), we see that $\MV^{X^\epsilon} \to \MV^X$, thereby illustrating the abstract stability result of Theorem \ref{thm52}. (This is also verifiable, of course, from the stochastic representation of $\MV$.)
\end{exam}

\section{Pathwise stochastic control with general rough path noise}\label{sec:pathwise-control}

\subsection{Pathwise stochastic control via randomised RSDEs}\label{sec-randomised-rsde}

Consider 
\begin{equation}  \label{sec51productbasis}
	 	( {\Omega},\MG,(\MF_t)_t,\P)  = (\Omega',\mathcal G',(\cff'_t)_t,\P') \otimes (\Omega'',\mathcal G'',(\cff''_t)_t, \P''),
\end{equation} 
where $(\Omega',\mathcal G',(\MF'_t)_t,\P')$ is the canonical space with $B$ a $d_B$-dimensional Brownian motion, $\MF'$ the augmented filtration of $\MF^B$, and $\P'$ the Wiener measure. Write $\omega = (\omega',\omega'')$.
Consider also the rough-path-valued random variable\footnote{The actual filtration on $\Omega''$ and the ambient $\sigma$-field $\mathcal G''$ will play little role in this section. Of course,
one can also work with the filtration generated by $\BW$ and also set $\mathcal G'' = \MF_T''$, if so desired.}
$$\BW: (\Omega'',\MF_T'' ,\P'') \rightarrow  (\Coa_T ,\BCoa_T )$$
and controls $\eta = \eta_t (\ome' , \mathbf{X})$, for $\mathbf{X} \in \Coa_T$, taken in
\begin{equation}
\label{equ:A1}
\mathcal{A}^1 : = \left\{ \eta \text{ is } \mathfrak{O}' \otimes \{
   \emptyset, \mathscr{C}_T \}/ \aaa  \text{-measurable}  \right\} 
\end{equation}
or the larger class   
\begin{equation}
\label{equ:A2} 
   \mathcal{A}^2 : = \left\{ \eta \text{ is } \mathfrak{O}' \otimes
   \mathfrak{C}_T / \aaa \text{-measurable}  \right\}.
   \end{equation}
(Or anywhere ``in-between'', such as the space of causal controls 
$\MA^{\mathrm{causal}}$ defined in \eqref{equ:Acausal} below.) Also write $\bar{\eta} = \bar{\eta}_t (\omega) \in \bar{\mathcal{A}}^i$ if
$\bar{\eta}_t (\omega) = \eta_t (\ome' , \mathbf{W} (\omega'')  )$
for $\eta \in \mathcal{A}^i$, $i = 1, 2.$ Note $\bar{\eta}_t \in \cff'_t
\vee \cff''_T$.
To simplify the discussion, for the remainder of this section we assume running cost $\ell=0$ and $f=f(t,y)$ is sufficiently regular in $t$ so that $f'=0$).
More precisely, we make the following

\begin{assumption} \label{assum-path-stoch}
Let Asssumptions \ref{assum-dpp} (1)-(3) 
	hold. Impose furthermore that $f$ in Assumption \ref{assum-dpp} (2) is independent of $\BX$ and sufficiently regular in the time variable. Specifically, we assume that \(f(t,.)\in \C^\gamma_b\) for each \(t\), and 
	\[ \sup_{(s,t)\in \Delta_T}\frac{| f (t,.) - f (s,.) |_\infty}{ | t - s |^{2 \beta}}<\infty \]

\end{assumption}
Note that for any $f$ as described in Assumption \ref{assum-path-stoch}, one has $(f, 0) \in \mathbf{D}^{2\beta}_X \mathcal{C}_b^{\gamma}$ for any $\alpha$-H\"older path $X$. Also note that autonomous vector fields $f \in \mathcal{C}^\gamma_b$ automatically satisfy these requirements.

Theorem \ref{thm:wellposed} shows that for $\eta \in \MA^2$, we have a unique solution $Y^{\ctrl, \BX}(\ome')$ to the RSDE
\be\label{r-dsde'}\left\{
\begin{split}
 & dY^{\ctrl,\BX}_t(\omega')=b(t,Y^{ \ctrl,\BX}_t ,\ctrl_t(\ome',\BX)  )dt+\sigma(t,Y^{ \ctrl,\BX}_t ,\ctrl_t(\ome',\BX) )dB_t(\omega')+f(t,Y_t^{\ctrl,\BX}) d\BX_t, \\
& Y_s = y,
 \end{split}\right.
\ee
of which, under the given assumptions we can and will take a $\BCoa_T$-optional version, cf. Theorem \ref{thm-rsde-optional} (measurable selection). We then have that
\be\label{randomized-rsde}
\bar{Y}^{\bar{\eta}} (\omega) := Y^{\bar{\eta},
\BW (\omega'')} (\omega'):= Y^{\eta, \mathbf{X}}|_{\BX=\BW(\ome'')}
\ee
defines 
a measurable process (see Section \ref{sec:BMcase} for a discussion when it coincides with solution to a classical SDE) and more precisely
\[ \bar{Y}^{\bar{\eta}}_t (\omega', \omega'') \in \cff'_t \vee
   \cff''_T . \]
Define 
\[ \mathcal{V}^i (s, y ;   \omega) \assign
    {\tmop{essinf}_{\eta  \in \mathcal{A}^i}} \mathbb{E}^{s,
   y} (g (\bar{Y}^{\bar{\eta}}_T)   {| \cff_T^{\mathbf{W}}
    }) =  {\tmop{essinf}_{\bar{\eta}   \in
   \bar{\mathcal{A}}^i}} \mathbb{E}^{s, y} (g (\bar{Y}^{\bar{\eta}}_T)
     | \cff_T^{\mathbf{W}}  ), \]
for $i = 1, 2$, where $\cff_T^{\mathbf{W}} $ is the $\sigma$-algebra generated by $\BW,$ and
we recall

%

On the other hand, to apply our dynamical programming for RSDEs to the random case, let 
\be\label{mv'}
\bar{\MV}(s,y; \ome)
:= \MV(s,y;\BW(\ome)) \equiv \mathcal{V} (s, y ;  
   \mathbf{X}) |_{\mathbf{X} = \mathbf{W} (\omega)}
   = \Big(\mathrm{inf}_{\ctrl \in \MA} \E' [g(Y^{\ctrl,\BX})]\Big)\Big|_{\BX=\BW(\ome)},
\ee 
where $\MA$ is introduced in \eqref{def:ad-cont} with $(\Omega, \MO)$ there replaced by $(\Omega', \MO')$, i.e.
\be\label{rough-ad-contr}
\MA:=\{\ctrl: ([0,T]\times \Omega', \MO') \rightarrow (A,\aaa) \text{ is measurable} \}.
\ee

To build the relation between the different random value functions we need the following proposition.

\begin{prop}\label{equ-condi-ep} 
Suppose that $(b,\sigma,f)$ satisfies  \cref{assum-path-stoch} $(1), (2)$. For any $\ctrl  \in   \MA^2,$ let $Y^{\ctrl,\BX}$ be the solution to \eqref{r-dsde'} with $\BX \in \Coa_T$, and $\bar{Y}^{\bar{\eta}}$  be
the randomized solution given by \eqref{randomized-rsde}. Let $g$ be bounded and measurable. Then we have 
 \be \label{equiv-condi}
  \mathbb{E} [g (\bar{Y}^{\bar{\eta}}_T) |
     {\text{$\cff$}}_T^{\mathbf{W}} ] = \mathbb{E} [g (Y^{\eta, \mathbf{X}}_T)] \Big|_{\mathbf{X} = \mathbf{W}}. 
 \ee

\end{prop}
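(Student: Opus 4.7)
The identity \eqref{equiv-condi} is a ``freezing'' / conditional Fubini statement and the plan is to reduce it to the classical result after checking that the RSDE solution has enough joint measurability for the substitution $\BX = \BW(\omega'')$ to make sense.

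First, I would introduce the deterministic-parameter functional
\[
   \Phi(\omega', \BX) \;\assign\; g\bigl(Y^{\eta, \BX}_T(\omega')\bigr).
\]
Under \cref{assum-path-stoch} (1)--(2) and the fact that $\eta \in \MA^2$ is $\ooo' \otimes \ccc_T/\aaa$-measurable, the coefficients $b(t, y, \eta_t(\omega', \BX))$, $\sigma(t, y, \eta_t(\omega', \BX))$, and $f(t, y)$ fit the hypotheses of \cref{thm-rsde-optional} (the measurable selection theorem); in particular the solution $(t,\omega',\BX)\mapsto Y^{\eta,\BX}_t(\omega')$ admits a $\ccc_T$-optional version, so $Y^{\eta, \BX}_T$ is $\cff'_T \otimes \ccc_T / \bbb^{d_Y}$-measurable. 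Since $g$ is bounded Borel, $\Phi$ is bounded and $\cff'_T \otimes \ccc_T$-measurable. By construction of the randomization in \eqref{randomized-rsde},
\[
   g\bigl(\bar Y^{\bar\eta}_T(\omega)\bigr) \;=\; \Phi\bigl(\omega', \BW(\omega'')\bigr)\qquad \text{for every } \omega=(\omega',\omega'').
\]

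Second, I would apply the standard ``freezing lemma'' on the product space \eqref{sec51productbasis}: if $\Phi:\Omega'\times \MC_T\to \R$ is bounded and $\cff'_T\otimes \ccc_T$-measurable, and $\BW:(\Omega'',\cff''_T)\to(\MC_T,\ccc_T)$ is measurable with $\cff_T^{\BW}=\sigma(\BW)\subset \{\emptyset,\Omega'\}\otimes \cff''_T$, then thanks to the product structure (hence the independence of the $\omega'$-coordinate from $\cff_T^{\BW}$),
\[
   \E\bigl[\Phi(\omega',\BW(\omega''))\,\big|\,\cff_T^{\BW}\bigr](\omega)
   \;=\; \bigl(\E'\Phi(\cdot,\BX)\bigr)\big|_{\BX=\BW(\omega'')}.
\]
This is established in textbook fashion: for $\Phi(\omega',\BX)=\varphi(\omega')\psi(\BX)$ with $\varphi\in L^\infty(\cff'_T)$ and $\psi\in L^\infty(\ccc_T)$ it is immediate from independence; a monotone-class / Dynkin argument then extends it to all bounded $\cff'_T\otimes \ccc_T$-measurable $\Phi$.

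Third, I would specialize this to our $\Phi$ to obtain
\[
   \E\bigl[g(\bar Y^{\bar\eta}_T)\,\big|\,\cff_T^{\BW}\bigr]
   \;=\; \E'\bigl[g(Y^{\eta,\BX}_T)\bigr]\Big|_{\BX=\BW}
   \;=\; \E\bigl[g(Y^{\eta,\BX}_T)\bigr]\Big|_{\BX=\BW},
\]
the last equality because, for fixed $\BX$, the random variable $g(Y^{\eta,\BX}_T)$ depends only on $\omega'$, so its $\P'$- and $\P$-expectations agree. This yields \eqref{equiv-condi}.

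The only genuine obstacle is the joint measurability of $(\omega',\BX)\mapsto Y^{\eta,\BX}_T(\omega')$, which is precisely the content of the measurable selection result \cref{thm-rsde-optional}; once this is in hand, the remainder is the standard freezing lemma. I would take care in the write-up to cite \cref{thm-rsde-optional} at the right place and to remark that no regularity of $\BX\mapsto Y^{\eta,\BX}$ is needed---only measurability.
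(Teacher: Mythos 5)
Your proposal is correct and takes essentially the same approach as the paper: establish joint measurability of $(\omega',\BX)\mapsto Y^{\eta,\BX}_T(\omega')$ via the measurable selection result \cref{thm-rsde-optional}, then apply a conditional Fubini (``freezing'') argument on the product space. The paper proves the freezing identity directly --- testing against $\xi=\phi(\BW)$ and using Fubini on $\P'\otimes\P''$ --- rather than invoking it as a standalone lemma, but this is the same argument you outline.
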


\begin{proof} 
	
According to Theorem \ref{thm-rsde-optional}, we see that $\bar{Y}^{\bar{\eta}}_T$ is measurable on $(\Omega' \times \Omega'', \cff'_T \otimes \cff''_T),$ and thus the left hand side of \eqref{equiv-condi} is well-defined. For any bounded measurable function $\phi$ on $(\Coa_T, \BCoa_T ),$ let $\xi:=\phi(\BW).$ Since $Y^{\eta, \mathbf{X}}_T$ is jointly measurable on $\Omega' \times \Coa_T,$ by the Fubini Theorem and identity \eqref{randomized-rsde}, we have for any $\ome'' \in \Omega'',$ 
  $$
  \mathbb{E}' [g (Y^{\eta, \mathbf{X}}_T)]  |_{\mathbf{X} = \mathbf{W}
     (\omega'')}= \mathbb{E}' [g (Y^{\eta, \mathbf{X}}_T) |_{\mathbf{X} = \mathbf{W}
     (\omega'')}  ]= \mathbb{E}' [g ( \bar{Y}^{\bar{\eta}}_T )  ]\added{(\ome'')} . 
  $$ 
  It follows that 
  $$
  \E \left[ \xi \mathbb{E}' [g (Y^{\eta, \mathbf{X}}_T)]  |_{\mathbf{X} = \mathbf{W}
     }  \right]= \E[\xi g(\bar{Y}^{\bar \eta }_T)],
  $$ 
which implies \eqref{equiv-condi}.
\end{proof}

\begin{rem} \label{cost-g-X}
Similar to Remark \ref{g-dep-X},
 if $\mathcal{V}:[0,T] \times \R^{d_Y} \times \Coa_T \rightarrow \R$ bounded, jointly measurable, the above proposition 
applies directly with $g (Y^{ {\eta},
     \mathbf{X}}_T)$ replaced by $\mathcal{V}(r,Y^{(s,y,\ctrl, \BX)}_{r}; \BX)$ for any $r\in[s,T]$, a term seen on the right-hand side of \eqref{DPP-path-stoch}. 

\end{rem}

%
%
%
%

Then we have the following equivalence between $\bMV$ and $\MV^i$, $i=1,2.$

\begin{theorem}\label{rep-value-fct}
 Suppose that  \cref{assum-path-stoch} holds. Then $ \bar{\mathcal{V}}$ given by \eqref{mv'} is a continuous modification of both $\mathcal{V}^1$ and $\mathcal{V}^2$ with a.s. regularity inherited by the regularity of $\mathcal{V}$ provided by  \cref{thm:RoughValueReg}. In particular, 
for every $(s,y) \in [0,T] \times \R^{d_Y}$, with probability one,
 $$
\bar{\mathcal{V}} (s, y ;  \omega) = \mathcal{V}^1 (s,
  y ;  \omega) =\mathcal{V}^2 (s, y ;  \omega).$$
\end{theorem}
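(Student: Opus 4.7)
The claim has three ingredients: pathwise continuity of $\bar{\mathcal{V}}$ in $(s,y)$, and the two equalities $\bar{\mathcal{V}}=\mathcal{V}^2$, $\bar{\mathcal{V}}=\mathcal{V}^1$ a.s. The plan is to unpack the definitions, use Proposition~\ref{equ-condi-ep} to trade conditional expectations for $\inf$-type expressions, and close the remaining gap by a Lindel\"of argument along the lines of Lemma~\ref{lem1}.

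First I would record that, by Theorem~\ref{thm:RoughValueReg}, the deterministic map $(s,y,\BX)\mapsto \mathcal{V}(s,y;\BX)$ has an explicit modulus of continuity that depends on $\BX$ only through $\nn{\BX}_\alpha$. Since $\nn{\BW(\omega)}_\alpha<\infty$ a.s., freezing $\omega$ yields the required pathwise regularity of $\bar{\mathcal{V}}(s,y;\omega)=\mathcal{V}(s,y;\BW(\omega))$. Next, fix $(s,y)$ and set $F_\eta(\omega):=\E'[g(Y^{\eta,\BX}_T)]\big|_{\BX=\BW(\omega)}$. For $\eta\in\mathcal{A}^1=\mathcal{A}$ (no $\BX$-dependence) Proposition~\ref{equ-condi-ep} gives $\E[g(\bar Y^\eta_T)|\cff_T^{\BW}]=F_\eta$ a.s., while for $\eta\in\mathcal{A}^2$ the same proposition (applied slice-by-slice in $\BX$, using that $\eta(\cdot,\BX)\in\mathcal{A}$) gives $\E[g(\bar Y^\eta_T)|\cff_T^\BW](\omega)=\E'[g(Y^{\eta(\cdot,\BX),\BX}_T)]|_{\BX=\BW(\omega)}\ge \mathcal{V}(s,y;\BW(\omega))=\bar{\mathcal{V}}(s,y;\omega)$ pointwise. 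Taking essinf over $\mathcal{A}^i$ in each case yields
\[
 \bar{\mathcal{V}}\le \mathcal{V}^2\le \mathcal{V}^1 \quad \text{a.s.},
\]
where the middle inequality is just $\mathcal{A}^1\subset \mathcal{A}^2$.

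The remaining inequality $\mathcal{V}^1\le \bar{\mathcal{V}}$ a.s.\ is the one I expect to be the main obstacle, as it requires a genuine measurable-selection step: a priori the essinf can sit strictly above the pointwise inf. The plan is to mimic the proof of Lemma~\ref{lem1}. By Theorem~\ref{thm:stabilityforRSDEs_compendium} and boundedness of $g$, for each fixed $\eta\in\mathcal{A}$ the function $\BX\mapsto \E'[g(Y^{\eta,\BX}_T)]$ is continuous, and $\BX\mapsto \mathcal{V}(s,y;\BX)$ is continuous by Theorem~\ref{thm:RoughValueReg}. Thus for every $\epsilon>0$ and every $\BX\in\MC_T$ there exist $\eta_\BX\in\mathcal{A}$ and a neighbourhood $N(\BX)\subset\MC_T$ such that $\E'[g(Y^{\eta_\BX,\BX'}_T)]\le \mathcal{V}(s,y;\BX')+\epsilon$ for all $\BX'\in N(\BX)$. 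Since $\MC_T$ is Polish hence Lindel\"of, extract a countable subcover $\{N(\BX_n)\}_n$ and form the Borel partition $P_1=N(\BX_1)$, $P_{n+1}=N(\BX_{n+1})\setminus \bigcup_{k\le n}P_k$. For each $n$, $\eta_n:=\eta_{\BX_n}\in\mathcal{A}^1$ and on $\{\BW\in P_n\}$ we have $F_{\eta_n}\le \bar{\mathcal{V}}+\epsilon$ pointwise.

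To finish, note that by definition of the essinf, $\mathcal{V}^1\le F_{\eta_n}$ a.s.\ for every $n$, and since the $\eta_n$'s form a \emph{countable} family, the exceptional null sets combine: $\mathcal{V}^1\le \inf_n F_{\eta_n}\mathbf{1}_{\{\BW\in P_n\}}\le \bar{\mathcal{V}}+\epsilon$ a.s.\ on all of $\Omega$. Letting $\epsilon\downarrow 0$ along a countable sequence gives $\mathcal{V}^1\le \bar{\mathcal{V}}$ a.s., and combining with the chain established above yields $\bar{\mathcal{V}}=\mathcal{V}^1=\mathcal{V}^2$ a.s.\ at the fixed $(s,y)$. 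Together with the pathwise continuity of $\bar{\mathcal{V}}$ from step one, this exhibits $\bar{\mathcal{V}}$ as the desired continuous modification of both $\mathcal{V}^1$ and $\mathcal{V}^2$.
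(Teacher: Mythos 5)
Your argument is correct and follows essentially the same route as the paper: $\bar{\mathcal{V}}\le\mathcal{V}^2$ via Proposition~\ref{equ-condi-ep} applied slice-wise in $\BX$, $\mathcal{V}^2\le\mathcal{V}^1$ from the inclusion $\mathcal{A}^1\subset\mathcal{A}^2$, and $\mathcal{V}^1\le\bar{\mathcal{V}}$ by the Lindel\"of covering and Borel partition of $\mathscr{C}_T$ with piecewise $\varepsilon$-optimal controls from $\mathcal{A}^1$, exactly as in Lemma~\ref{lem1} and the paper's proof. (Minor cosmetics: the piecewise bound should be written as $\sum_n F_{\eta_n}\mathbf{1}_{\{\BW\in P_n\}}$ rather than $\inf_n$, and the tolerance comes out as $3\varepsilon$ rather than $\varepsilon$, but these do not affect the argument.)
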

\begin{remark} In \cite[Thm 5.5]{BM07} the authors consider a stochastic value function, that can be seen to coincide with ours in case of Brownian (rough path) randomization, and show the existence of a continuous version. Our improvement here is two-fold. (1) At no point have we assumed Brownian (rough path) statistics of $\mathbf{W}$, as one might expect since the entire analysis is conditional on the information provided by $\mathbf{W}$. (2)  \cref{thm:RoughValueReg} provides a modulus of regularity for $\mathcal{V}$ which is inherited by $\bar{\mathcal{V}}$ and thus yields regularity results for the stochastic value function, which is new even in the case previously considered by \cite{BM07}.
\end{remark} 

%

\begin{proof}[Proof of \cref{rep-value-fct}] 
Note that $\MA^1 \subseteq   \MA^2 ,$ and thus $\MV^1 \ge \MV^2$ a.s.. Hence we only need to show that $\bMV \ge \MV^1$ and $\MV^2 \ge \bMV$ a.s..
We omit \((s,y)\) in the notations in the remaining of the proof below.

  (i) ``$\MV^2 \ge \bMV$'': Let $0 \leqslant$ $\phi \in \C_b$($\mathscr{C  }_T$). For
  any $u = u_t (\omega' , \mathbf{X}) \in \mathcal{A}^2$, note that for $\mathbf{X}$ fixed, $(t, \omega')
  \mapsto u_t (\omega' , \mathbf{X})$ is in $\mathcal{A}$.
  We have
  \[
   \begin{split}
   	\mathbb{E} [\phi (\mathbf{W} (\ome'')) \MV ( 
    \mathbf{W} (\omega''))]    = & \ \mathbb{E} [\phi (\mathbf{W} (\ome'')) (\inf_{\eta \in \mathcal{A}}
    \mathbb{E}' [g (Y^{\eta, \mathbf{X}}_T)]) |_{\mathbf{X} = \mathbf{W}
    (\omega'')}]\\
  \le & \ \mathbb{E} [\phi (\mathbf{W} (\ome'')) (\mathbb{E}' [g
    (Y^{u , \mathbf{X}}_T)])
    |_{\mathbf{X} = \mathbf{W} (\omega'')}].   	
   \end{split}
  \]
In view of Proposition \ref{equ-condi-ep}, we have 
\[
\begin{split}
\mathbb{E} [\phi (\mathbf{W} (\ome'')) (\mathbb{E}' [g
    (Y^{u, \mathbf{X}}_T)])
    |_{\mathbf{X} = \mathbf{W} (\omega'')}] & =  \mathbb{E} [\phi (\mathbf{W} (\ome'')) \mathbb{E} [g
    (\bY^{ \bar{u}   }_T) \mid
    \cff^{\mathbf{W}}_T]  ],\\
\end{split}
\]
which implies that
  \[ 
 \bMV= \mathcal{V} (  \mathbf{W} (\omega'')) \leqslant
     \tmop{essinf}_{ {u} \in \mathcal{A}^2 } \mathbb{E} [g
     (\bY^{ \bar u }_T) \mid
     \cff^{\mathbf{W}}_T] (\omega'')=\MV^2.
      \]
      
  (ii) ``$\bMV \ge \MV^1$'': For every
  $\varepsilon >0, \mathbf{X} \in \Coa_T,$ by  \cref{thm:RoughValueReg}, there exists $\delta=\delta(\vep,\BX) >0$ such that for any $\bx\in \Coa_T,$
  \[ d (\mathbf{x}, \mathbf{X }) < \delta \Rightarrow \sup_{\eta \in
     \mathcal{A} } \left(\mathbb{E}' [g (Y^{\eta, \mathbf{X}}_T)] -\mathbb{E}' [g
     (Y^{\eta, \mathbf{x}}_T)]\right) < \varepsilon, 
     \]
which implies 
\be\label{vep-appr}
| \MV(\BX)-\MV(\Bx) | < \vep,  \ \ \ |J(\ctrl;\BX)- J(\ctrl;\Bx)|< \vep,
\ee
where we recall $J(\ctrl;\BX):= \mathbb{E}' [g (Y^{\eta, \mathbf{X}}_T)].$ Then by the same argument in the proof of Lemma \ref{lem1}, there exists $\{\mathbf{x}_i\}_i \subset \mathscr{C}_T,$ and 
   \[ B_i : = \{ B (\mathbf{x}_i, \delta (\varepsilon, \mathbf{x}_i)) : i
     \in \mathbb{N} \},  
     \]
 such that $\{B_i\}_i$ is a countable open cover of $\Coa_T$. Let  $\{P_i\}_i$ be a (Borel) measurable partition  of $\mathscr{C}_T$ by $P_1 := B_1, P_2 := B_2 \backslash B_1, \ldots ,$ with some $  \By_i  \in P_i$ for any $i$. 
   
 For any $i,$ let $\ctrl^{i} \in \MA$ be $\vep$-optimal for $\MV(\By_i ),$ i.e.
$$
\MV(\By_i ) \le  J(\ctrl^{i};\By_i ) \le \MV(\By_i ) + \vep.
$$
By the definition of $P_i$, \eqref{vep-appr} and the above inequality, we have
\begin{align} \label{est-3}
	\MV(\BX)|_{\BX=\BW(\ome'') } & = \sum_{i\in \N} \MV(\BW (\ome'') ) 1_{P_i}(\BW(\ome'')) \\ \nonumber
	& \ge \sum_{i\in \N } (\MV(\By_i ) - \vep ) 1_{P_i}(\BW (\ome'')) \\
	\nonumber
	& \ge \sum_{i\in \N } (J(\ctrl^i; \By_{i}) - 2\vep ) 1_{P_i}(\BW (\ome'')).
\end{align}
In view of the second inequality in \eqref{vep-appr}, for any $i,$
\be\label{est-4}
(J(\ctrl^i; \By_i ) - 2\vep ) 1_{P_i}(\BW (\ome''))   \ge (J(\eta^i; \BW (\ome'') )-3 \vep )  1_{P_i}(\BW (\ome'')).
\ee
Note that $u^{i}(\ome',\BX) := \ctrl^{i}(\ome') \in  \MA^1 \subseteq \MA^2 ,$ 
which by  \cref{equ-condi-ep}, implies that 
\be \label{rela-5}
J(\eta^{i}; \BW (\ome'')) = \E'[g(Y_T^{\ctrl^{i},\BX })]|_{\BX=\BW (\ome'')}= \E[ g(\bY^{\bar{u}^i }_T ) | \MF^{\BW}_T ](\ome'').
\ee
Take \eqref{est-4}, \eqref{rela-5} to \eqref{est-3}, and we have 
\begin{align*}
\MV(\BX)|_{\BX=\BW(\ome'') } & \ge \sum_{i\in \N } \E[ g( \bY^{\bar{u}^i }_T ) | \MF^{\BW}_T ](\ome'')   1_{P_i}(\BW(\ome'')) - 3\vep  \\
& \ge \sum_{i\in \N } \MV^1(\ome'') 1_{P_i}(\BW (\ome''))  - 3\vep \\
& = \MV^1(\ome'') -3 \vep.
\end{align*}
Thus upon $\varepsilon \downarrow 0$, we see $\mathcal{V} (  \mathbf{W} (\omega'')) \ge \MV^1 (\omega'')$ a.s..
\end{proof}

\subsection{Stochastic dynamical programming principle} 

Recall $\bY^{s,y,\bctrl}(\ome)$ and $ \bar{\mathcal{V}}(s,y;\ome)$, as previously given in \eqref{randomized-rsde} and \eqref{mv'} respectively. In view of  \cref{thm:RoughValueReg} and \eqref{mv'}, $\bMV:[0,T]\times  \R^{d_Y}  \times \Omega  \rightarrow \R$ is $\bbb_T \otimes \bbb^{d_Y} \otimes \{\emptyset, \Omega' \} \otimes \cff''_T  $-measurable. Thus $\ome \mapsto \bMV(t,\bY^{\bctrl}_t(\ome); \ome)$ is $\cff'_t \otimes  \cff''_T$-measurable for any fixed $t\in[s,T]$ and \(\eta\in\caa^2\), thereby also including the case  \(\eta\in\caa^1 \subset \caa^2 \). 
By our dynamical programming principle for rough SDEs (\cref{thm-RBP}), we have the following version for the pathwise stochastic control problem. Following Remark \ref{rem:remove_running_cost} we can and will take zero running cost $\ell=0$.

\begin{theorem}[DPP for the pathwise stochastic control]
Suppose that  \cref{assum-path-stoch} holds. Then, for any $y \in \R^{d_Y}$,  $s\in[0,T]$, 
$h\in[0,T-s]$ and $i=1,2$, 
\be\label{DPP-path-stoch}
\bMV(s,y;\ome)= {\tmop{essinf}_{\eta \in \mathcal{A}^i}}\ \E[ \bMV(s+h, \bY^{s,y,\bctrl}_{s+h}; \cdot) \mid \cff^{\BW}_T] (\ome) \quad\text{a.e. } \ome,
\ee
where the left-hand side is continuous on $[0,T]\times \R^{d_Y}$ a.s. as in Theorem \ref{rep-value-fct}.

\end{theorem}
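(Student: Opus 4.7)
The strategy is to transfer the deterministic rough DPP from \cref{thm-RBP} to the randomized setting via \cref{equ-condi-ep} and its extension noted in \cref{cost-g-X}, and then to identify the resulting infimum with an essential infimum over $\mathcal{A}^1$ or $\mathcal{A}^2$. Throughout, write $\Psi(s,y;\BX):=\MV(s,y;\BX)$; note that by \cref{thm:RoughValueReg} this map is continuous on $[0,T]\times\R^{d_Y}\times\MC_T$ and bounded (since $g$ is bounded), hence jointly Borel measurable, so \cref{cost-g-X} applies with ``$g$'' replaced by $\Psi(s+h,\cdot;\cdot)$. Using $\mathcal{A}^1\subset\mathcal{A}^2$, we have $\operatorname*{essinf}_{\mathcal{A}^2}\le\operatorname*{essinf}_{\mathcal{A}^1}$, so it suffices to prove the upper bound with $i=2$ and the lower bound with $i=1$.

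\textbf{Upper bound ($\bMV\le \operatorname*{essinf}_{\mathcal{A}^2}$).} Fix $\eta\in\mathcal{A}^2$. For each $\BX\in\MC_T$, the section $(t,\omega')\mapsto \eta_t(\omega',\BX)$ is optional on $\Omega'$, i.e.\ an element of $\MA$. The deterministic rough DPP (\cref{thm-RBP}) yields, for every such $\BX$,
\[
  \MV(s,y;\BX)\le \E'\!\left[\MV\!\bigl(s+h,Y^{(s,y,\eta(\cdot,\BX),\BX)}_{s+h};\BX\bigr)\right].
\]
Substituting $\BX=\BW(\omega'')$ and applying \cref{equ-condi-ep} together with \cref{cost-g-X} (using joint measurability of $\MV$ in $(y,\BX)$), the right-hand side equals $\E\!\left[\bMV(s+h,\bY^{s,y,\bar\eta}_{s+h};\omega)\mid \cff_T^{\BW}\right]$ a.s.\ on $\omega''$, giving $\bMV(s,y;\omega)\le \E\!\left[\bMV(s+h,\bY^{s,y,\bar\eta}_{s+h};\omega)\mid\cff_T^{\BW}\right]$ $\P$-a.s. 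Taking the essential infimum over $\bar\eta\in\bar{\mathcal{A}}^2$ yields the desired upper bound; the case $i=1$ then follows from $\mathcal{A}^1\subset\mathcal{A}^2$.

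\textbf{Lower bound ($\bMV\ge \operatorname*{essinf}_{\mathcal{A}^1}$).} This is the main technical step and mirrors the construction in the proof of \cref{rep-value-fct}(ii). Fix $\varepsilon>0$. By continuity of $\Psi(s,y;\cdot)$ and the uniform estimates of \cref{thm:stabilityforRSDEs_compendium} (uniform in the control), at each $\BX\in\MC_T$ there is $\delta(\varepsilon,\BX)>0$ such that whenever $\rho_\alpha(\BX,\bar\BX)<\delta$,
\[
  \sup_{\eta\in\MA}\Bigl|\E'[\MV(s+h,Y^{(s,y,\eta,\BX)}_{s+h};\BX)] - \E'[\MV(s+h,Y^{(s,y,\eta,\bar\BX)}_{s+h};\bar\BX)]\Bigr|<\varepsilon, \qquad |\MV(s,y;\BX)-\MV(s,y;\bar\BX)|<\varepsilon.
\]
By Lindel\"of's property of $\MC_T$ we extract a countable subcover $\{B(\By_i,\delta(\varepsilon,\By_i))\}_{i\in\N}$ and turn it into a Borel partition $\{P_i\}_{i\in\N}$ with $\By_i\in P_i$. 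For each $i$, use the deterministic rough DPP at $\By_i$ to pick $\eta^i\in\MA$ with
\[
  \E'[\MV(s+h,Y^{(s,y,\eta^i,\By_i)}_{s+h};\By_i)]\le \MV(s,y;\By_i)+\varepsilon.
\]
Crucially, each $\eta^i$ depends on $\omega'$ alone, so the randomization $\bar\eta^i_t(\omega):=\eta^i_t(\omega')$ lies in $\bar{\mathcal{A}}^1$. For $\BX\in P_i$, chaining the two continuity estimates gives
\[
  \MV(s,y;\BX)\ge \E'[\MV(s+h,Y^{(s,y,\eta^i,\BX)}_{s+h};\BX)] - 3\varepsilon.
\]
Evaluating at $\BX=\BW(\omega'')$, summing against $\mathbf{1}_{P_i}(\BW)$, and applying \cref{equ-condi-ep}/\cref{cost-g-X} on each piece:
\[
  \bMV(s,y;\omega)\ge \sum_{i\in\N}\E\!\left[\bMV(s+h,\bY^{s,y,\bar\eta^i}_{s+h};\omega)\mid\cff_T^{\BW}\right]\mathbf{1}_{P_i}(\BW(\omega''))-3\varepsilon.
\]
Since $\bar\eta^i\in\bar{\mathcal{A}}^1$ for every $i$, each summand is $\ge \operatorname*{essinf}_{\eta\in\mathcal{A}^1}\E[\bMV(s+h,\bY^{s,y,\bar\eta}_{s+h};\omega)\mid\cff_T^{\BW}]$ a.s., and $\sum_i\mathbf{1}_{P_i}(\BW)=1$, so
\[
  \bMV(s,y;\omega)\ge \operatorname*{essinf}_{\eta\in\mathcal{A}^1}\E\!\left[\bMV(s+h,\bY^{s,y,\bar\eta}_{s+h};\omega)\mid\cff_T^{\BW}\right]-3\varepsilon.
\]
Sending $\varepsilon\downarrow 0$ completes the lower bound. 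Combined with the upper bound and $\MA^1\subset\MA^2$, this proves \eqref{DPP-path-stoch} for both $i=1,2$.

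\textbf{Main obstacle.} The delicate point is the measurable selection of $\varepsilon$-optimizers in $\mathcal{A}^1$: a single control in $\mathcal{A}^1$ cannot approximate the infimum uniformly in $\BX$, so we instead produce countably many $\eta^i\in\mathcal{A}^1$ and combine them against the partition $\{P_i\}$ of the rough path space---the combined control itself is only in $\mathcal{A}^2$, but the inequality is preserved when we drop down to $\operatorname*{essinf}_{\mathcal{A}^1}$ piece by piece. The other subtlety is to ensure that \cref{equ-condi-ep} can be applied with the random terminal cost $\MV(s+h,\cdot;\cdot)$ in place of $g(\cdot)$; this is guaranteed by the boundedness and joint Borel measurability of the rough value function from \cref{thm:RoughValueReg}.
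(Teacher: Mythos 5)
Your proof is correct and follows essentially the same route as the paper's: show $\bMV \le \bMV^2$ by sectioning a control in $\mathcal{A}^2$ at each fixed $\BX$ and applying the rough DPP (\cref{thm-RBP}) together with the conditioning identity of \cref{equ-condi-ep}/\cref{cost-g-X}; show $\bMV \ge \bMV^1$ via the Lindel\"of covering / $\varepsilon$-optimal selection argument; and close with $\mathcal{A}^1\subset\mathcal{A}^2$. The one stylistic difference is that the paper's printed proof outsources the lower bound by invoking ``repeat the second part of the proof of \cref{rep-value-fct} with $g$ replaced by $\mathcal{V}(s+h,\cdot;\cdot)$,'' whereas you spell that covering construction out explicitly (including the uniform-in-$\eta$ continuity estimate coming from \cref{thm:RoughValueReg} and \cref{thm:stabilityforRSDEs_compendium}), so no gap — just more detail at the same spot the paper compresses.
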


\begin{proof}
Let $\bMV^i(s,h,y,\ome),$ $i=1,2$ be the right-hand side of \eqref{DPP-path-stoch}.
 Similar to the proof of Theorem \ref{rep-value-fct}, since $\MA^1 \subseteq \MA^2, $ we easily have $ \bMV^1 \ge \bMV^2$ a.s.. Then we only need to show that  $\bMV^2 \ge \bMV$ and $\bMV \ge \bMV^1$  a.s.. 
 
 For ``$\bMV^2 \ge \bMV$'' part, take any $u \in \MA^2.$ By \cref{cost-g-X}, we have 
 \[
 \E[ \bMV(s+h, \bY^{s,y,\bar{u}}_{s+h}; \cdot) \mid {\text{$\cff$}}^{\BW}_T](\omega) = \E'[\MV(s+h, Y^{(s,y,u,\BX) } ; \BX)] \Big|_{\BX=\BW(\ome'')}.
 \]	
Recall $\MA$ is defined by \eqref{rough-ad-contr}, and thus $u(\cdot,\BX) \in \MA$ for any fixed $\BX$. By Theorem \ref{thm-RBP} (rough DPP) it follows that
\be 
\begin{split}
\E[ \bMV(s+h, \bY^{s,y,\bar{u}}_{s+h}; \cdot) \mid \cff^{\BW}_T](\ome) \ge & \left(\inf_{\ctrl \in \MA} \E'[\MV(s+h,Y^{(s,y,\ctrl,\BX)}_{s+h};\BX)] \right) \Big|_{\BX=\BW(\ome'')}	\\
=& \ \MV(s,y;\BX)\Big|_{\BX=\BW(\ome'')}= \bMV(s,y;\ome).
\end{split}
\ee

For ``$\bMV \ge \bMV^1$'' part, again by Theorem \ref{thm-RBP}, we have 
$$ 
\bMV= \left(\inf_{\ctrl \in \MA} \E'[\MV(s+h,Y^{(s,y,\ctrl,\BX)}_{s+h};\BX)] \right) \Big|_{\BX=\BW(\ome'')}. 
$$ 
We claim that for any $\vep >0,$ 
\[
\left(\inf_{\ctrl \in \MA} \E'[\MV(s+h,Y^{(s,y,\ctrl,\BX)}_{s+h};\BX)] \right) \Big|_{\BX=\BW(\ome'')} \ge \bMV^1(\ome) - 3 \vep,
\]
which implies our result.
Indeed, since $\MV(s+h,\cdot; \cdot)$ is continuous on $\R^{d_Y} \times \Coa_T$ in view of  \cref{thm:RoughValueReg}, we can repeat the second part of the proof of Theorem \ref{rep-value-fct} with $g(Y^{\ctrl,\BX}_T)$ replaced by $\MV(s+h,Y^{s,y,\ctrl,\BX}_{s+h};\BX),$ and thus the claim follows.
\end{proof}

\section{Pathwise stochastic control with Brownian rough path noise } \label{sec:BMcase}


Now in particular, we take $(\Omega'',\mathcal G'',(\cff''_t)_t,\P'')$ as any filtered probability space with a $d_W$-dimensional Brownian motion $W.$ Again consider
\begin{equation}  \label{sec51productbasis}
	 	( {\Omega},\MG,(\MF_t)_t,\P)  = (\Omega',\mathcal G',(\cff'_t)_t,\P') \otimes (\Omega'',\mathcal G'',(\cff''_t)_t,\P''),
\end{equation} 
where $(\Omega',\mathcal G',(\cff'_t)_t,\P')$ is defined as in Section \ref{sec-randomised-rsde}. 

This section is devoted to case when $\BX$ happens to be some realization of some Brownian rough path $\BW = \BW (\omega'')$, over some a Brownian motion $W=W(\omega'')$, independent of $B = B(\omega')$,
in which case one may expect connections to ``doubly stochastic'' It\^o stochastic differential equations, provided controls are causal in $\BX$.

We review the standard construction of the It\^o and Stratonovich Brownian rough path, following Chapter 3 in \cite{FH20}.
For each $(s,t)\in \Delta_T$, define $\mathbb{W}^{\text{It\^o}}_{s,t}=\int_s^t \delta W_{s,r}dW_r$ as an It\^o integral and $\mathbb{W}_{s,t}^{\mathrm{Strato}}:=\int_s^t \delta W_{s,r}\circ dW_r=\mathbb{W}^{\text{It\^o}}_{s,t}+\frac12 (t-s)I$ as a Stratonovich integral.
It is well-understood that $\BW^{\text{It\^o}}(\omega'')=(W(\omega''),\mathbb{W}^{\text{It\^o}}(\omega''))$ and $\BW^{\mathrm{Strato}}(\omega'')=(W(\omega''),\mathbb{W}^{\mathrm{Strato}}(\omega''))$ are elements of  $\cap_{\alpha\in(1/3,1/2)} \Coa_T$ for all $\omega''\in N_1^c$, where $N_1$ is a $\P''$-null set of $\Omega''$. Moreover, for any $t\in[0,T]$, the {\it lifting mapping } 
\be\label{ito-lift}
\begin{array}{llll}
\BW:=\BW^{\text{It\^o}}: & [0,T]\times \Omega'' & \rightarrow & (\Coa_T, \BCoa_T)	\\
 & (t,\ome'') &\mapsto & \BW^{\text{It\^o}}(\ome'')_{.\wedge t}
 \end{array}
\ee 
is progressively measurable w.r.t. $(\MF''_t )_t$. 
Moreover, we have the following result on the progressive measurability for the composition of $\BW^{\text{It\^o}}$ with $\BCoa_T$-optional process introduced in Definition \ref{def:optional}.

%

\begin{prop}\label{prop:rou-inte-agr-ito} 
 Suppose that $(Z,Z')$ defined on $[0,T] \times \Omega' \times \Coa_T$ is $ \BCoa_T$-optional, and satisfies $(1)$ $(Z_0, Z'_0)(\ \cdot \ ; \BW_0) \in L_q(\MF_0;\P)$; $(2)$ for any $\BX=(X,\X) \in \Coa_T,$ $(Z,Z')^{\BX}(\cdot):= (Z,Z')(\ \cdot \ ; \BX) \in \BD^{\gamma,\gamma'}_X L_{p,q}(\Omega')$, where $q \ge p \ge 2,$ 
 \(\gamma,\gamma'\in[0,1]\) such that \(\alpha+\gamma>1/2\) and \(\alpha+\min(\alpha,\gamma)+\gamma'>1\);
 $(3)$ $(Z, Z')$ is $\BX$-{\it causal}.\
 Then for any $t\in[0,T],$ 
\[ 
\int_0^t Z^{\mathbf{W} (\ome'')}_s (\omega') d W_s (\omega'')= \Big(  \int_0^t (Z, Z')^{\mathbf{X}} (\omega') d
   \mathbf{X} \Big)   \Big|_{\mathbf{X} = \mathbf{W} (\omega'')}, \ \ \P\text{-a.s.}
\]
where the left-hand side is the It\^o integral.
 
\end{prop}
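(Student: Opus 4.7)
The plan is to represent both sides as limits of the same family of Riemann sums and then reconcile two distinct modes of convergence on the product space $\Omega=\Omega'\times\Omega''$. For a partition $\op$ of $[0,t]$, split the rough-integral Riemann sum evaluated at $\BX=\BW(\ome'')$ as $R^\op_t=S^\op_t+C^\op_t$, where
\[
	S^\op_t := \sum_{[u,v]\in\op,\,u\le t} Z^{\BW(\ome'')}_u(\ome')\,\delta W_{u,v\wedge t}(\ome'')
\]
is a left-endpoint It\^o-type Riemann sum and
\[
	C^\op_t := \sum_{[u,v]\in\op,\,u\le t} (Z'_u)^{\BW(\ome'')}(\ome')\,\mathbb{W}^{\text{It\^o}}_{u,v\wedge t}(\ome'')
\]
is the Gubinelli-derivative correction. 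By \cref{prop.rsint}, for each $\ome''\notin N_1$ one has $R^\op_t\to\bigl[\int_0^t(Z,Z')^\BX d\BX\bigr]_{\BX=\BW(\ome'')}$ in $L^p(\Omega')$; applying bounded convergence in $\ome''$ (with trivial bound $1$) to $\P'(|R^\op_t-\cdot|>\eps)$ upgrades this to convergence in probability on $\Omega$.

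The role of $\BX$-causality will appear twice. First, $(Z_u,Z'_u)^{\BW(\ome'')}(\ome')$ is $\cff'_u\vee\cff''_u$-measurable; combined with independence of $B$ and $W$, this makes $\mathbb{W}^{\text{It\^o}}_{u,v}$ a martingale increment relative to $\cff'_u\vee\cff''_u$ whose conditional second moment is of order $(v-u)^2$. Orthogonality of the summands then yields
\[
	\|C^\op_t\|_{L^2(\Omega)}^2 \,\lesssim\, \sup_u\|Z'_u\|_{L^2(\Omega)}^2\cdot|\op|\cdot T,
\]
which vanishes as $|\op|\to 0$ (using $\sup_t\|Z'_t\|_q<\infty$ with $q\ge 2$). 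Second, $Z^\BW$ is adapted to $\{\cff'_t\vee\cff''_t\}$, while $W$ remains a Brownian motion in this enlarged filtration by independence, so $\int_0^t Z^\BW dW$ is a bona fide It\^o integral on $(\Omega,\cff'_t\vee\cff''_t)$, and classical Riemann-sum approximation will give $S^\op_t\to\int_0^t Z^\BW_s dW_s$ in $L^2(\Omega)$, hence in probability.

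Combining, $R^\op_t=S^\op_t+C^\op_t$ converges in probability both to $\bigl[\int_0^t(Z,Z')^\BX d\BX\bigr]_{\BX=\BW}$ and to $\int_0^t Z^\BW dW+0$, yielding the claimed almost-sure identity by uniqueness of the limit in probability. The main obstacle is the It\^o Riemann-sum convergence for $S^\op_t$: the stochastic-controlled-rough-path hypothesis supplies $L^p(\Omega')$-H\"older regularity of $Z^\BX$ for each fixed $\BX$, but with constants depending on $\nn{\BX}_\alpha$; transferring this to $L^2(\Omega)$-regularity on the product space (needed to apply the classical left-Riemann-sum approximation of the It\^o integral, and also to justify the bound on $\sup_u\|Z'_u\|_{L^2(\Omega)}$ above) is handled by localizing on $\{\nn{\BW(\ome'')}_\alpha\le M\}$, using Fubini to turn the per-$\ome''$ $L^p(\Omega')$ estimates into $L^2(\Omega)$ estimates on the localized event, and then letting $M\to\infty$.
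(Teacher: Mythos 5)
Your proof takes essentially the same route as the paper's: split the compensated (rough) Riemann sum as $R^\op = S^\op + C^\op$, kill $C^\op$ by martingale orthogonality using the Brownian structure of $W$ on the product space, identify $S^\op$ with the classical It\^o Riemann sums, and transfer the per-$\BX$ convergence from Proposition \ref{prop.rsint} to convergence in $(\P'\otimes\P'')$-probability by Fubini and bounded convergence. The paper packages the last step into Corollary \ref{coro-ito-rough} and compresses your entire analysis of $S^\op,C^\op$ into ``an argument similar to [FH20]''; you are more explicit about the mechanism, which is a genuine gain in clarity.

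One technical point to fix. In your $L^2(\Omega)$ estimate for $C^\op_t$ you invoke $\sup_u\|Z'_u\|_{L^2(\Omega)}<\infty$, and you propose to secure this by localizing on $\{\nn{\BW(\omega'')}_\alpha\le M\}$. That event does not control what you actually need: the hypothesis only gives, for each fixed $\BX$, finiteness of the $\BD^{\gamma,\gamma'}_X L_{p,q}(\Omega')$ seminorm of $(Z,Z')^\BX$, and nothing in Proposition \ref{prop:rou-inte-agr-ito} bounds that seminorm in terms of $\nn{\BX}_\alpha$. The correct localization is on the $\cff_T^{\BW}$-measurable events $\big\{\sup_u\|Z'_u(\,\cdot\,;\BW(\omega''))\|_{L^q(\Omega')}\le M\big\}$ (equivalently, on a bound for the full $\BD$-seminorm of $(Z,Z')^{\BW(\omega'')}$ viewed as a function of $\omega''$); these events exhaust $\Omega''$ as $M\uparrow\infty$ because the seminorm is finite for every $\BX$, and on them your orthogonality bound $\|C^\op_t\|_{L^2(\Omega)}^2\lesssim M^2\,|\op|\,T$ is valid. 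The same localization also supplies the boundedness needed for the left-endpoint Riemann-sum approximation of $\int_0^\cdot Z^{\BW}\,dW$. With that adjustment the argument closes, and it is slightly more self-contained than the paper's appeal to [FH20].
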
 
\begin{proof}
   By the $\mathbf{X}$-causality of $(Z, Z')^{\mathbf{X}}$,
   we have that $(Z,Z')^{\BW(\cdot)}(\cdot)$ is $(\MF_s)$-adapted, and thus the It\^o integral is well-defined. 
   Let \(J=(J^\BX_t)\) be the a \(\BCoa_T\)-optional version of the rough stochastic integral 
   $
   \int_0^t (Z, Z')^{\mathbf{X}} (\omega') d
      \mathbf{X}
   $ whose existence is guaranteed by \cref{optional-r.i.}.  
   From  \cref{prop.rsint}, we see that for any fixed \(t\) and \(\BX\), \(J^\BX_t\) is the limit in \(\P'\)-probability of compensated Riemann sums
   approximation, denoted by  $\{I_n^{\mathbf{X}} (\omega') : n \geqslant 1 \}$.
   By Corollary \ref{coro-ito-rough}, it also holds that
   \[ 
   \lim_{n \to \infty} I_n^{\mathbf{W} (\ome'')} (\omega') = \Big( \int_0^t
      (Z, Z')^{\mathbf{X} } (\omega') d \mathbf{X} \Big) (\omega',
      \mathbf{X}) \Big|_{ \mathbf{X} = \mathbf{W} (\omega'')}   \quad
      \text{in $(\mathbb{P}' \otimes \mathbb{P}'')$-probability. } 
   \]
   By assumption, $W = W (\omega'')$ is a $\left( {\cff''_s}
   \right)$-Brownian motion on  $({\Omega''},
   ( {\cff}''_{s})_s,
   \ensuremath{\mathbb{P}}^{''})$, thus also a $(\cff_s)$-Brownian
   motion on the product space $(\Omega, (\cff_s)_s, \mathbb{P}) = (\Omega',
   ( \cff'_s)_s, \mathbb{P}') \otimes (\Omega'', ( \cff''_s)_s,
   \mathbb{P}'')$.
   
   By an argument similiar to \cite{FH20}, we see that the limits in $(\mathbb{P}' \otimes
   \mathbb{P}'')$-probability,
   \[ 
   \lim_{n \to \infty} I_n^{\mathbf{W} (\ome'') } (\omega') 
   \]
   and
   \[ \lim_{n \to \infty} \sum_{[s, r] \in \pi_n} Z_s^{\mathbf{W} (\ome'') }
      (\omega') (W_r - W_s) (\omega'') \]
   exist and coincide. 
   Noting that the latter is the It\^o integral, we obtain the claimed identity.
\end{proof}

To study the equivalence between the randomised rough system \eqref{randomized-rsde} and the stochastic system with two Brownian motions, we need to consider the following $\mathbf{X}$-causal  controls, in which case there is no problem in the definition of the later system.
\begin{equation}
\label{equ:Acausal}
\MA^{\mathrm{causal}}:=\{ \ctrl= \ctrl_t(\ome', {\text{$\BX$}}_{.\wedge t} ) \text{ is measurable w.r.t. }\MO'\otimes \BCoa_T  \}.
\end{equation}
We note that that controll processes $\bar \eta$, for $\eta \in \MA^{\mathrm{causal}}$, are adapted to w.r.t $\mathfrak{F} =  \mathfrak{F}' \otimes \mathfrak{F}''$, hence correspond to the ``admissible'' controls considered in \cite{BM07}.

%

\begin{prop}
	Suppose that $(b,\sigma,f)$ satisfies Assumption \ref{assum-path-stoch}, and $\BW=\BW^{\text{It\^o}}$ is given by \eqref{ito-lift}. For any $\ctrl \in \MA^{\mathrm{causal}}$, let $\bY^{\bar \eta }$ be defined by \eqref{randomized-rsde}, and $Y$ be the solution to the SDE 
\be\label{BB-rde}
\left\{
\begin{split}
 & dY_t =b(t,Y_t ,\bctrl_t   )dt+\sigma(t,Y_t ,\bctrl_t )dB_t(\omega')+f(t,Y_t ) dW_t(\ome''), \\
& Y_s = y. 
 \end{split}\right.
\ee
with $\bctrl_t(\ome',\ome''):= \ctrl_t(\ome',\BW(\ome''))$.
Then for any 
$t\in[0,T], $ we have $\bY^{\bar \eta }_t= Y_t$, $\P$-a.s.. 	
	
\end{prop}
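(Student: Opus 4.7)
\medskip
\noindent\textbf{Proof plan.} My plan is to write the randomised RSDE for $Y^{\eta,\BX}$ in integral form, substitute $\BX=\BW(\omega'')$ in each term to recognise \eqref{BB-rde}, and conclude via pathwise uniqueness for the classical It\^o SDE on the product space.

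First I would verify that the $\ccc_T$-optional version $(t,\omega',\BX)\mapsto Y^{\eta,\BX}_t(\omega')$ produced by \cref{thm-rsde-optional} is $\BX$-\emph{causal}, i.e.\ $Y^{\eta,\BX}_t = Y^{\eta,\BX_{\cdot\wedge t}}_t$ $\P'$-a.s. Since $b,\sigma,f$ do not depend on $\BX$ and $\eta\in\MA^c$ is $\BX$-causal, an induction along the Picard scheme \eqref{meas-picard} shows that every iterate $Y^{(n)}$ is $\BX$-causal (all three integrals over $[0,t]$ in \eqref{meas-picard} involve only $\BX_{\cdot\wedge t}$), and the property is inherited by the limit. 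Consequently, $(Z,Z'):=(f(Y^{\eta,\BX}),D_yf(Y^{\eta,\BX})f(Y^{\eta,\BX}))$ is an $\BX$-causal stochastic controlled rough path which, by \cref{thm:wellposed}, also satisfies the integrability and regularity hypotheses of \cref{prop:rou-inte-agr-ito}.

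Starting from Definition \ref{def:solutionRSDEs_compendium} c'), I would then substitute $\BX=\BW(\omega'')$ term by term. The Lebesgue integral commutes with substitution tautologically. For the It\^o integral in $B$, the parameterised version underlying \cref{thm-rsde-optional} (based on \cref{SY78-Prop5} in the appendix) agrees, after substitution, with a genuine It\^o integral against $B$ on the product space $(\Omega,\P)$, as $B$ remains a Brownian motion for the enlarged filtration $(\cff_t)$. For the rough integral, \cref{prop:rou-inte-agr-ito} applied to $(Z,Z')$ with $\BW=\BW^{\mathrm{It\^o}}$ gives
\[
\Big(\int_s^t (Z,Z')\,d\BX\Big)\Big|_{\BX=\BW(\omega'')} = \int_s^t f(r,\bar Y^{\bar\eta}_r)\,dW_r(\omega''),\qquad \P\text{-a.s.},
\]
with no correction term precisely because $\BW$ is the It\^o lift. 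Assembling the three identities shows that $\bar Y^{\bar\eta}$ solves \eqref{BB-rde}, and pathwise uniqueness (which holds under the uniform Lipschitz assumptions on $b,\sigma,f$ from \cref{assum-path-stoch}) yields $\bar Y^{\bar\eta}_t = Y_t$ $\P$-a.s.\ for every $t$.

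The main obstacle I anticipate is the $\BX$-causality argument in Step 1: it is exactly what enables the application of \cref{prop:rou-inte-agr-ito} and what distinguishes the causal class $\MA^c$ from the anticipating $\MA^2$, for which the analogous statement would fail. The choice of It\^o lift for $\BW$ is equally essential --- with $\BW^{\mathrm{Strato}}$ one would pick up the symmetric It\^o--Stratonovich correction $\tfrac12\int_s^t (D_yf)f(r,\bar Y^{\bar\eta}_r)\,dr$, recovering a Stratonovich SDE rather than the It\^o SDE \eqref{BB-rde}.
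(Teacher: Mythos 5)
Your proposal follows essentially the same route as the paper: use $\BX$-causality of the optional solution, invoke \cref{prop:rou-inte-agr-ito} to convert the rough integral into an It\^o integral against $W$, substitute $\BX=\BW(\omega'')$ in the drift and $B$-integral terms, and close with pathwise uniqueness for \eqref{BB-rde}. One small divergence: you spell out the $\BX$-causality of $Y^{\eta,\BX}$ via induction along the Picard iterates \eqref{meas-picard}, a point the paper treats as immediate; in exchange, you pass over the substitution step for the Lebesgue and $B$-It\^o integrals (calling the former ``tautological'' and gesturing at \cref{SY78-Prop5} for the latter), whereas the paper carries this out via Riemann-sum approximations and \cref{coro-ito-rough}, which is the mechanism that actually justifies exchanging ``integrate, then substitute'' with ``substitute, then integrate'' along a common a.s.\ limit; your sketch would need that detail filled in to match the paper's level of rigor, but the underlying idea and the ingredients you cite are the correct ones.
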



\begin{proof}
   We recall that \(Y^{\eta,\BX}\) is the \(\BCoa_T\)-optional solution to \eqref{r-dsde'}.
   Moreover, because \(\eta\in\caa^{\mathrm{causal}} \), it is evident that $\P'$-a.s. $Y^{\ctrl, \BX}_t = Y^{\ctrl, \BX_{\cdot \wedge t}}_t$ for all $t \in [0,T].$ 
   By \cref{def:solutionRSDEs_compendium}, we see that \((Z,Z'):=(f(Y^{\eta,\BX} ), D_yf(Y^{\eta,\BX} )f(Y^{\eta,\BX} ))\) belongs to \(\textbf{D}^{\bar \alpha,\bar \alpha'} _XL_2\) for some suitable parameters \(\bar \alpha,\bar{\alpha}' \) stated there. 
   Then according to Proposition \ref{prop:rou-inte-agr-ito}, 
\be\label{eq-rs1}
\int_s^t f_r( Y^{\ctrl, \BX}_r ) d\BX_r \Big|_{\BX= \BW(\ome'')}= \int_s^t f_r (\bY^{\bar \eta}_r) dW_r(\ome'').
\ee
On the other hand, we note that for any $(t,\BX) \in [s,T] \times \Coa_T,$ let 
$$
A_{i,n}^t(\ome', \BX) := \frac{i}{2^n} \text{Leb}\left( \left\{r\in[s,t]: b_r(Y^{\ctrl, \BX }_r , \ctrl_r( \BX) ) \in [ \frac{i}{2^n},  \frac{i+1}{2^n}) \right\}  \right),$$ 
where $\ctrl_r(\BX)=(\ome'\mapsto \ctrl_r(\ome',\BX)) $. Then we have
$$
\int_0^t b_r(Y^{\ctrl, \BX }_r , \ctrl_r( \BX) )dr 
=:\lim_{n \rightarrow \infty} \sum_{i=-n2^n}^{n2^n-1}  A_{i,n}^t(\ome', \BX),
$$
for each $\omega'$ and hence in $\mathbb{P}'$-probability.
By Corollary \ref{coro-ito-rough}, we have the following convergence in $\P$-probability
\be\label{eq-rs2}
\begin{split}
  \int_s^t b_r(Y^{\ctrl, \BX }_r , \ctrl_r(\BX) )dr \Big|_{\BX=\BW(\ome'')}& =  \lim_{n \rightarrow \infty} \sum_{i=-n2^n}^{n2^n-1}  A_{i,n}^t(\ome', \BX) \Big|_{\BX=\BW(\ome'')} ,	\\
 & = \lim_{n \rightarrow \infty} \sum_{i=-n2^n}^{n2^n-1}  \bar{A}_{i,n}^t,\\
  & = \int_s^t b_r( \bY^{\bctrl }_r, \bctrl_r  ) dr,
\end{split}
\ee
where $\bar{A}_{i,n}^t:= \frac{i}{2^n} \text{Leb}\left(  \{r\in[s,t]: b_r( \bar{Y}^{\bctrl }_r , \bctrl_r  ) \in [ \frac{i}{2^n},  \frac{i+1}{2^n})  \}  \right)$. 
Corollary \ref{coro-ito-rough} also leads to
\be\label{eq-rs3}
\int_s^t \sigma_r(Y^{\ctrl, \BX }_r , \ctrl_r(\BX) )dB_r \Big|_{\BX=\BW(\ome'')} = \int_s^t \sigma_r( \bY^{\bctrl }_r, \bctrl_r  ) dB_r.
\ee
In view of \eqref{eq-rs1}, \eqref{eq-rs2} and \eqref{eq-rs3}, we obtain 
\be
\bY^{\bctrl}_t = y + \int_s^t b_r( \bY^{\bctrl }_r, \bctrl_r  ) dr + \int_s^t \sigma_r( \bY^{\bctrl }_r, \bctrl_r  ) dB_r + \int_s^t f_r (\bY^{\bar \eta}_r) dW_r(\ome''),
\ee
which implies that $\bY^{\bctrl}$ is a solution to \eqref{BB-rde}, and our result follows by the uniqueness of solutions to \eqref{BB-rde}.
\end{proof}


\section*{Appendix: Some measurable selection results}

We collect some facts on measurable selection, mostly from Stricker--Yor {\cite{SY78}}. 

\medskip 
\noindent Let 
$(\Omega, \cff, (\cff_t)_{t \geq 0}, P)$ is a filtered
probability space satisfying the usual conditions.
We agree that
$\cff_{0_-} =\cff_0$, and that $\cff=\cff_{\infty}
= \bigvee_t \cff_t$. We denote by $\ooo$ (resp. $\mathcal{P}$)
the optional (resp. predictable) sigma-algebra on $\mathbb{R}_+ \times \Omega$
associated with this family.

Assume a given measurable space $(U, \uuu)$, and we consider functions
$X : (u, t, \omega) \mapsto X^u_t (\omega)$ with real values on $U \times
\mathbb{R}_+ \times \Omega$. We will say that $X$ is $\uuu$-{\em{measurable}}\footnote{Stricker--Yor call such $X$ simply measurable, we prefer to be more explicit.}
without further clarification to indicate that $X$ is measurable with respect
to $\uuu \otimes \mathcal{B} (\mathbb{R}_+) \otimes \cff$. We
always interpret $X$ as a family, indexed by $u$, of stochastic processes $X^u
= (X^u_t)_{t \geq 0}$, and we will consider the phrase ``the process $X^u$
depends measurably on $u$'' as equivalent to ``$X$ is $\uuu$-measurable''.

Let $X$ and $\bar{X}$ be two functions as described above; we will sometimes
say that $X$ and $\bar{X}$ are {\tmem{indistinguishable}} if, for all $u \in
U$, the processes $X^u$ and $\bar{X}^u$ are indistinguishable.

\begin{lem}[\cite{SY78}, Lemme 1] Let $X$ be a function on $U \times \mathbb{R}_+
  \times \Omega$ such that:
  \begin{itemize}
    \item[a)] For all $u \in U$, the process $X^u$ is indistinguishable from a
    c{\`a}dl{\`a}g process.
    
    \item[b)] For all $t \in \mathbb{R}_+$, there exists a function $H_t$ on
    $U \times \Omega$, measurable with respect to $\uuu \otimes
    \cff$, such that: $\forall u \in U, X^u_t (\cdummy) = H_t (u,
    \cdot)$ almost surely $P$-a.s.
  \end{itemize}
  Then there exists a $\uuu$-measurable function $Y$ on $U \times \mathbb{R}_+ \times
  \Omega$, indistinguishable from $X$, such that for all $u$, the trajectories
  of $Y^u$ are all c{\`a}dl{\`a}g.
\end{lem}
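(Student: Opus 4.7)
\medskip
\noindent\textbf{Proof plan.} The plan is to construct $Y$ by right limits along rationals of a jointly measurable version of $X$ at rational times, handling the càdlàg regularization in a way that preserves $\uuu$-measurability. First, for each $q \in \mathbb{Q}_+$, hypothesis (b) provides $H_q \colon U \times \Omega \to \mathbb{R}$ that is $\uuu \otimes \cff$-measurable and satisfies $H_q(u, \cdot) = X^u_q(\cdot)$ $P$-a.s. for every $u$. Set $\tilde{X}(u, q, \omega) := H_q(u, \omega)$; this is our jointly measurable ``scaffolding'' indexed by a countable dense subset of times.

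Next, define the \emph{regularity set}
\[
\Lambda \;:=\; \bigl\{(u, \omega) \in U \times \Omega : q \mapsto \tilde{X}(u, q, \omega) \text{ extends to a c\`adl\`ag function on } \mathbb{R}_+\bigr\}.
\]
This set lies in $\uuu \otimes \cff$ because the extension property can be written via countably many rational Cauchy-type conditions: at each rational $t \geq 0$, for every $n \in \mathbb{N}$ there exists a rational $\delta > 0$ such that $|\tilde{X}(u, q, \omega) - \tilde{X}(u, r, \omega)| < 1/n$ for all rationals $q, r \in (t, t + \delta) \cap \mathbb{Q}_+$, and symmetrically for left limits; the extension to all real $t$ then follows automatically from density of $\mathbb{Q}_+$. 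On $\Lambda$, define
\[
Y^u_t(\omega) \;:=\; \lim_{q \downarrow t,\, q \in \mathbb{Q}_+} \tilde{X}(u, q, \omega),
\]
and on $\Lambda^c$ set $Y^u \equiv 0$. By construction, $Y$ is $\uuu \otimes \bbb(\R_+) \otimes \cff$-measurable and, for every $u$ and every $\omega$, the trajectory $t \mapsto Y^u_t(\omega)$ is c\`adl\`ag (either as a genuine c\`adl\`ag extension, or identically zero).

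Finally, to verify indistinguishability, fix $u \in U$ and invoke hypothesis (a) to obtain a c\`adl\`ag process $Z^u$ indistinguishable from $X^u$. Since $\tilde{X}(u, q, \cdot) = X^u_q = Z^u_q$ $P$-a.s.\ for each rational $q$ and $\mathbb{Q}_+$ is countable, there is a $P$-null set outside which $\tilde{X}(u, q, \omega) = Z^u_q(\omega)$ for all $q \in \mathbb{Q}_+$ simultaneously. On this full-measure set, c\`adl\`ag regularity of $Z^u$ forces $\omega$ into the $u$-section of $\Lambda$ and yields $Y^u_t(\omega) = Z^u_t(\omega)$ for every $t \geq 0$; hence $Y^u$ and $X^u$ are indistinguishable.

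\medskip
\noindent\textbf{Anticipated obstacles.} The only delicate point is encoding ``$q \mapsto \tilde{X}(u,q,\omega)$ extends to a c\`adl\`ag function on $\mathbb{R}_+$'' as a $\uuu \otimes \cff$-measurable event; this reduces to a countable conjunction of rational Cauchy conditions, but one must check carefully that local boundedness (implicit in the existence of finite right/left limits at every real time, including irrational ones) is automatic once the rational Cauchy condition holds at every rational $t$, using that any function $\mathbb{Q}_+ \to \mathbb{R}$ satisfying such a condition admits a unique c\`adl\`ag extension to $\mathbb{R}_+$. Once $\Lambda$ is measurable, the rest is a routine separation argument.
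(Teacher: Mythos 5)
The paper does not actually prove this lemma; it is stated as \cite{SY78}, Lemme~1, and only followed by a remark about adaptedness and optionality, so there is no in-paper proof to compare against. That said, your strategy---build a jointly measurable ``rational scaffolding'' $\tilde X$ from the $H_q$, carve out a $\uuu\otimes\cff$-measurable regularity set $\Lambda$, define $Y$ by right limits along rationals on $\Lambda$ and by $0$ off $\Lambda$, then use hypothesis~(a) to show that for each $u$ a.e.\ $\omega$ lies in the $u$-section of $\Lambda$---is exactly the right one and is what Stricker--Yor do.

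There is however a genuine gap in your characterization of $\Lambda$. The condition ``at each \emph{rational} $t$, for every $n$ there is a rational $\delta>0$ making the right (and left) oscillation of $q\mapsto\tilde X(u,q,\omega)$ on $(t,t+\delta)\cap\mathbb{Q}_+$ smaller than $1/n$'' is \emph{not} sufficient for the function $\mathbb{Q}_+\ni q\mapsto\tilde X(u,q,\omega)$ to extend to a c\`adl\`ag function on $\mathbb{R}_+$, because it says nothing about behaviour near irrational times. Concretely, the function $\phi(q)=\sin\bigl(1/(q-\sqrt2)\bigr)$ on $\mathbb{Q}_+$ is continuous at every rational $t$ (so all your rational Cauchy conditions hold with room to spare), yet $\lim_{q\downarrow\sqrt2}\phi(q)$ fails to exist, so $\phi$ has no c\`adl\`ag extension; your $Y^u_t$ would then be undefined at $t=\sqrt2$ on a set you have declared ``regular''. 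The statement ``the extension to all real $t$ then follows automatically from density of $\mathbb{Q}_+$'' is where the argument breaks. The standard measurable encoding of ``$\phi$ extends to c\`adl\`ag'' is the upcrossing-plus-boundedness criterion: for every $N\in\mathbb{N}$, $\sup_{q\in\mathbb{Q}\cap[0,N]}|\phi(q)|<\infty$, and for every pair of rationals $a<b$ the number of upcrossings of $(a,b)$ by $\phi$ over $\mathbb{Q}\cap[0,N]$ is finite. This is a countable conjunction of $\uuu\otimes\cff$-measurable events (suprema over rationals and counts over finite rational sequences), and it exactly characterizes the set on which right and left limits exist at \emph{all} real times. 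Replacing your $\Lambda$ with this set repairs the proof: the rest of your argument---joint measurability of the right-limit process along dyadic rationals above $t$, and the indistinguishability check via $Z^u$---then goes through as written.
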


Suppose that $H_t$ is $\cff_t$-measurable for all $t$. The family
$(\cff_t)$ satisfies the usual conditions, the $P$-negligible set $C_u$
belongs to $\cff_0$, and the process $(Y^u_t)$ is adapted to
$(\cff_t)$ for all $t$ and is c{\`a}dl{\`a}g, hence optional, for all
$u \in U$. However, as pointed out in \cite{SY78} (after Lemme 1), the map $(u, (t, \omega)) \mapsto
Y^u_t (\omega)$ is not $\uuu \otimes \ooo$-measurable a priori
(see however Proposition \ref{L3SY78} and the remark which follows).

%

\begin{lem}\label{SY78-Lem2} 
  Let $H : (u, t, \omega) \mapsto H^u_t (\omega)$ be a $\uuu \otimes \ooo$-measurable
  function such that, for all $u$ and all finite $t$, we have
  \[ \int_0^t |H^u_s (\omega) |ds < \infty \quad P
     \text{-a.s.} \]
  Then there exists  $Z$ measurable w.r.t. $\uuu \otimes \ooo$ such that, for all $u$, the
  processes $(Z^u_t)$ and $\left( \int_0^t H^u_s d s \right)$ are
  indistinguishable.
\end{lem}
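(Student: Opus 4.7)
The plan is to (i) reduce to bounded $H$ by truncation, (ii) in the bounded case invoke the preceding lemma to produce a $\uuu$-measurable primitive with continuous paths, and (iii) upgrade this to full $\uuu\otimes\ooo$-measurability via a dyadic step-process approximation. The main obstacle is step~(iii): as explicitly noted after the preceding lemma, its conclusion is a~priori only $\uuu\otimes\bbb(\R_+)\otimes\cff$-measurability, which is weaker than the joint optionality we need.

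For the reduction, set $H^{u,n}_t := (H^u_t\wedge n)\vee(-n)$, which is still $\uuu\otimes\ooo$-measurable and bounded. By dominated convergence and the standing integrability assumption, $\int_0^t H^{u,n}_s\,ds \to \int_0^t H^u_s\,ds$ uniformly on compacts $P$-a.s., for every fixed $u$. If we can produce a $\uuu\otimes\ooo$-measurable $Z^n$ solving the problem for $H^n$, then the pointwise limit $Z^u_t(\omega) := \lim_n Z^{u,n}_t(\omega)$ (set to $0$ where the limit fails) is $\uuu\otimes\ooo$-measurable and indistinguishable from $\int_0^\cdot H^u_s\,ds$ for every $u$.

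For the bounded case $|H|\le M$, set $\tilde Z^u_t(\omega) := \int_0^t H^u_s(\omega)\,ds$ pointwise. Fubini renders $\tilde Z$ jointly $\uuu\otimes\bbb(\R_+)\otimes\cff$-measurable; every path $t\mapsto \tilde Z^u_t$ is $M$-Lipschitz, hence continuous; and for each fixed $t$ the map $(u,\omega)\mapsto \tilde Z^u_t(\omega)$ is $\uuu\otimes\cff_t$-measurable, since optionality of $H$ implies $\uuu\otimes\bbb([0,t])\otimes\cff_t$-measurability of its restriction to $[0,t]\times\Omega$, to which Fubini applies. Both hypotheses of the preceding lemma hold, producing a $\uuu$-measurable $Y$, indistinguishable from $\tilde Z$ for every $u$, with continuous trajectories; the construction moreover preserves adaptedness, so each $Y^u$ is $(\cff_t)$-adapted.

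For the final upgrade, approximate $Y$ along the dyadic grid $t_k^n := k/2^n$ by the step processes
\[
Y^{(n)}(u,t,\omega) := \sum_{k\ge 0} Y^u_{t_k^n}(\omega)\,\mathbf{1}_{[t_k^n,\,t_{k+1}^n)}(t).
\]
Each $Y^u_{t_k^n}$ is $\uuu\otimes\cff_{t_k^n}$-measurable, and approximating it by simple functions of the form $\mathbf{1}_A(u)\mathbf{1}_B(\omega)$ with $A\in\uuu$ and $B\in\cff_{t_k^n}$ shows that every summand is $\uuu\otimes\ooo$-measurable, the optional part being $\mathbf{1}_B(\omega)\mathbf{1}_{[t_k^n,t_{k+1}^n)}(t)$, the indicator of an optional stochastic interval. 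Hence $Y^{(n)}$ is $\uuu\otimes\ooo$-measurable, and continuity of each $Y^u$ gives $Y^{(n)}\to Y$ pointwise on $U\times\R_+\times\Omega$. Thus $Y$ itself is $\uuu\otimes\ooo$-measurable, which combined with the truncation argument closes the proof.
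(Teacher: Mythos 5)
Your proof is correct and follows essentially the same route as the paper's: Fubini gives both the joint $\uuu\otimes\bbb(\R_+)\otimes\cff$-measurability and, at each fixed $t$, the $\uuu\otimes\cff_t$-measurability of the time integral; continuity of paths then lets one upgrade to $\uuu\otimes\ooo$-measurability (the paper simply says ``continuous in $t$, and our conclusion follows'', which is exactly your dyadic step-approximation argument spelled out). The truncation preamble and the detour through the preceding (Lemma~1) are both superfluous: $\tilde Z$ as you define it already has everywhere-continuous paths and is already $\uuu\otimes\cff_t$-measurable at each $t$, so you can run the dyadic argument directly on $\tilde Z$ without first modifying it, avoiding the slightly delicate point of whether Lemma~1's modification preserves joint $\uuu\otimes\cff_t$-measurability rather than just per-$u$ adaptedness.
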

\begin{remark} This lemma is a variation of Lemme 2 in  {\cite{SY78}}, that deals with the integration against general finite variation process.
\end{remark} 
\begin{proof}
For any fixed $t,$ by the jointly measurability of $H$ on $(U \times [0,t] \times \Omega , \uuu \otimes \bbb_t \otimes \MF_t) $, we see that $ \int_0^t H^u_s d s $ is $\uuu \otimes \MF_t$-measurable. Note that $\int_0^t H^u_s d s$ is continuous in $t$, and our conclusion follows. 
\end{proof}

\begin{proposition}[{\cite{SY78}}, Proposition 1 and Corollary 1] \label{SY_Prop1}
 (i) Let $(X_n)$ be a sequence of
  $\uuu \otimes \cff$-measurable functions on $U \times \Omega$.
  Suppose that for all $u \in U$, the sequence $X_n (u, \cdot)$ converges in
  probability on $\Omega$. Then there exists a $\uuu \otimes
  \cff$-measurable function $X$ such that for all $u \in U$, $X (u,
  \cdot) = \lim_{n \to \infty} X_n (u, \cdot)$ in probability.
  
  (ii) Suppose that $X_n (u, \cdot)$ converges in $L^p$ for all $u \in U$.
  Then there exists a $\uuu$-measurable function $X$ on $U \times \Omega$ such that,
  for all $u$, $X_n (u, \cdot)$ converges in $L^p$ to $X (u, \cdot)$. 
\end{proposition}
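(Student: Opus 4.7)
\medskip
\noindent\textbf{Proof proposal.} The plan for part (i) is to extract a subsequence $n_k(u)$, depending $\uuu$-measurably on $u$, along which convergence in probability is upgraded to almost-sure convergence; this mirrors exactly the strategy used in \cref{optional-limit}. First I would observe that, since each $X_n$ is $\uuu \otimes \cff$-measurable, the map $(u,\omega)\mapsto |X_i(u,\omega)-X_j(u,\omega)|$ is $\uuu \otimes \cff$-measurable, so by Fubini the function
\[
\varphi_{i,j}(u):=P\bigl(|X_i(u,\cdot)-X_j(u,\cdot)|>\varepsilon\bigr)
\]
is $\uuu$-measurable for every $\varepsilon>0$ and any $i,j$. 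Consequently $(u,n)\mapsto \sup_{i,j\ge n}\varphi_{i,j}(u)$ is $\uuu\otimes\bbb(\N)$-measurable, and the recursion
\[
n_0(u):=1, \qquad n_k(u):=\inf\Bigl\{n> k\vee n_{k-1}(u) : \sup_{i,j\ge n} P\bigl(|X_i(u,\cdot)-X_j(u,\cdot)|>2^{-k}\bigr)\le 2^{-k}\Bigr\}
\]
defines $\uuu$-measurable maps $n_k: U\to\N$ (well-defined because $(X_n(u,\cdot))$ is Cauchy in probability for each $u$).

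Next, set $Y_k(u,\omega):=X_{n_k(u)}(u,\omega)$; being a countable combination of the $\uuu\otimes\cff$-measurable functions $X_n$ restricted to the $\uuu$-measurable pieces $\{n_k=n\}$, $Y_k$ is $\uuu\otimes\cff$-measurable. By the Borel--Cantelli lemma, for each fixed $u$ the sequence $(Y_k(u,\cdot))_k$ is $P$-a.s. Cauchy and hence converges $P$-a.s. to some random variable, which must coincide a.s. with the in-probability limit of $(X_n(u,\cdot))$. Define
\[
X(u,\omega):=\begin{cases}\lim_{k\to\infty} Y_k(u,\omega), & \text{if the limit exists,}\\ 0, & \text{otherwise.}\end{cases}
\]
This $X$ is $\uuu\otimes\cff$-measurable as the pointwise limit of $\uuu\otimes\cff$-measurable functions on the (measurable) set of convergence, and by construction satisfies $X(u,\cdot)=\lim_n X_n(u,\cdot)$ in probability for every $u\in U$.

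For part (ii), I would run the same construction with $L^p$-norms in place of probabilities. The key point is that $u\mapsto \|X_i(u,\cdot)-X_j(u,\cdot)\|_p=\bigl(\int |X_i-X_j|^p dP\bigr)^{1/p}$ is $\uuu$-measurable by Fubini, so replacing the selection rule by
\[
n_k(u):=\inf\Bigl\{n>k\vee n_{k-1}(u): \sup_{i,j\ge n}\|X_i(u,\cdot)-X_j(u,\cdot)\|_p\le 2^{-k}\Bigr\}
\]
again yields $\uuu$-measurable indices. Then $\sum_k\|Y_{k+1}(u,\cdot)-Y_k(u,\cdot)\|_p<\infty$ gives $P$-a.s. convergence of $(Y_k(u,\cdot))_k$ together with $L^p$-convergence (via completeness of $L^p$) to the same limit $X(u,\cdot)$, which is therefore the $L^p$-limit of $(X_n(u,\cdot))$.

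The only genuine subtlety is the measurable dependence of the subsequence $n_k$ on $u$, which is resolved by the Fubini-based observation in the first paragraph. Everything else is then a bookkeeping exercise with Borel--Cantelli and the usual almost-sure/$L^p$ comparisons, with no further obstacles.
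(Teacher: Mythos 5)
Your proof is correct. Note that the paper does not supply its own proof of this proposition---it is quoted directly from Stricker--Yor---but the mechanism you use (a Fubini argument to show the probability/\(L^p\)-norm of increments depends \(\uuu\)-measurably on \(u\), followed by a \(\uuu\)-measurable recursive choice of a fast subsequence \(n_k(u)\), Borel--Cantelli, and the pointwise limit) is precisely the device the paper itself deploys in its proof of \cref{optional-limit}, so your argument is in line with the paper's methodology.
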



\begin{corollary}\label{coro-ito-rough}
  Let $Q$ be a probability measure on the parameter space $(U, \uuu)$.
  For $X$ and $X_n$ as in Proposition \ref{SY_Prop1}, part (i). and (ii), respectively.
  Then
  
  (i) \ $X = \lim_{n \to \infty} X_n$ in $(Q \otimes P)$-probability.
  
  (ii) $X = \lim_{n \to \infty} X_n$ in $L^p (Q \otimes P)$, provided $\{
  \mathbb{E}^P (| X_n (u, \cdot) |^p) : n \geqslant 1 \}$ is uniformly
  $Q$-integrable. 
\end{corollary}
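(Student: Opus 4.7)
The plan is to deduce both statements from Fubini/Tonelli applied to the joint measurability provided by Proposition \ref{SY_Prop1}, combined with dominated convergence in (i) and Vitali's theorem in (ii).

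First, note that since $X$ and each $X_n$ are $\uuu\otimes\cff$-measurable, so are $|X_n - X|$ and the indicator $\1_{\{|X_n - X|>\vep\}}$; hence, by Tonelli, the function
\[
   u\mapsto P\bigl(|X_n(u,\cdot)-X(u,\cdot)|>\vep\bigr)
\]
is $\uuu$-measurable. For (i), the hypothesis from Proposition \ref{SY_Prop1}(i) is precisely that this function converges to $0$ for every $u\in U$ as $n\to\infty$. Since the integrand is uniformly bounded by $1$, dominated convergence yields
\[
   (Q\otimes P)\bigl(|X_n-X|>\vep\bigr)
   =\int_U P\bigl(|X_n(u,\cdot)-X(u,\cdot)|>\vep\bigr)\,Q(du)\longrightarrow 0,
\]
which is (i).

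For (ii), the same measurability considerations show that $u\mapsto \mathbb{E}^P(|X_n(u,\cdot)-X(u,\cdot)|^p)$ is $\uuu$-measurable, and by Proposition \ref{SY_Prop1}(ii) it converges pointwise to $0$. Using $|X_n-X|^p\le 2^{p-1}(|X_n|^p+|X|^p)$ and the fact that $L^p(P)$-convergence implies $\mathbb{E}^P(|X_n(u,\cdot)|^p)\to \mathbb{E}^P(|X(u,\cdot)|^p)$ for each $u$, we deduce that $\mathbb{E}^P(|X(u,\cdot)|^p)\in L^1(Q)$ and that the family $\{\mathbb{E}^P(|X_n(u,\cdot)-X(u,\cdot)|^p)\}_{n\ge 1}$ is uniformly $Q$-integrable, thanks to the hypothesis on $\{\mathbb{E}^P(|X_n(u,\cdot)|^p)\}_n$. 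Vitali's convergence theorem on $(U,\uuu,Q)$ then gives
\[
   \|X_n-X\|_{L^p(Q\otimes P)}^p
   =\int_U \mathbb{E}^P\bigl(|X_n(u,\cdot)-X(u,\cdot)|^p\bigr)\,Q(du)\longrightarrow 0,
\]
which proves (ii).

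There is no real obstacle: the only points requiring a line of justification are the joint measurability (so Fubini/Tonelli applies) and the passage from pointwise uniform $Q$-integrability of $\mathbb{E}^P(|X_n|^p)$ to that of $\mathbb{E}^P(|X_n-X|^p)$, handled via the elementary $L^p$ triangle inequality bound above.
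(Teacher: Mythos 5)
Your proof is correct and follows essentially the same route as the paper: reduce to the fiberwise quantities via Fubini/Tonelli and then pass to the $Q$-integral by dominated convergence in (i) and by uniform integrability plus Vitali in (ii). The only cosmetic difference is that in (i) you use exceedance probabilities $P(|X_n(u,\cdot)-X(u,\cdot)|>\vep)$ whereas the paper uses the truncated expectation $\mathbb{E}^P(|X_n(u,\cdot)-X(u,\cdot)|\wedge 1)$ — both are standard metrizations of convergence in probability — and in (ii) you spell out the transfer of uniform $Q$-integrability from $\{\mathbb{E}^P(|X_n|^p)\}$ to $\{\mathbb{E}^P(|X_n-X|^p)\}$ via the $2^{p-1}$ bound, a step the paper asserts without detail.
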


\begin{proof}
  (i) We know from Proposition \ref{SY_Prop1}, part (i), that all $u \in U$
  \[ \phi_n (u) : =\mathbb{E}^P (| X (u, \cdot) - X_n (u, \cdot) | \wedge 1)
     \rightarrow 0. \]
  By  ($\uuu \otimes \cff$) -measurability of $X$, the Fubini Theorem and
  bounded convergence
  \[ \mathbb{E}^{(Q \otimes P)} (| X - X_n | \wedge 1) =\mathbb{E}^Q \phi_n
     \rightarrow 0 \]
  which shows that $X_n$ converges to $X$ in $(Q \otimes P)$-probability. \
  
  (ii) We know $\phi^p_n (u) : =\mathbb{E}^P (| X (u, \cdot) - X_n (u, \cdot)
  |^p) \rightarrow 0$. Assuming $\{ \mathbb{E}^P (| X_n (u, \cdot) |^p) : n
  \geqslant 1 \}$ to be uniformly $Q$-integrable, implies that the $\{
  \phi^p_n : n \geqslant 1 \}$ are UI w.r.t $Q$ and so
  \[ \mathbb{E}^{(Q \otimes P)} (| X - X_n |^p) =\mathbb{E}^Q \phi_n
     \rightarrow 0. \]
\end{proof}

For a measurable process $H : \mathbb{R}_+ \times \Omega \to \mathbb{R}$
(which we assume, for simplicity, to be bounded or positive), and let
$^{\circ} H$ and $^p H$ be its optional and predictable projections. Recall
that $^{\circ} H$, for example, is closely related to certain conditional
expectation operators:
\[ \text{-- If } T \text{ is an } (\cff_t)  \text{-stopping time,
   }^{\circ} H_T =\mathbb{E} [H_T |\cff_T],\ \ P\text{-a.s. on } \{T <
   \infty\} . \]
\begin{proposition}[{\cite{SY78}}, Lemme 3]  \label{L3SY78}Let $H : U \times \mathbb{R}_+ \times \Omega \to
  \mathbb{R}$ be a measurable function, bounded or positive. There exist two
  functions $K$ and $L : U \times (\mathbb{R}_+ \times \Omega) \to
  \mathbb{R}$, measurable respectively with respect to $\uuu \otimes
  \ooo$ and $\uuu \otimes \mathcal{P}$, such that for all $u \in
  U$, $K^u$ (resp. $L^u$) is a version of $^{\circ} H^u$ (resp. $^p H^u$). We
  will write $K = {^{\circ} H}$, $L = {^p H}$ from now.
\end{proposition}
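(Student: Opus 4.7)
My plan is to apply a standard monotone class argument over the product measurable space $U\times\mathbb{R}_+\times\Omega$. Let $\mathcal{H}$ denote the collection of bounded measurable maps $H\colon U\times\mathbb{R}_+\times\Omega\to\mathbb{R}$ for which the conclusion of the proposition holds, that is, for which one may find a $\uuu\otimes\ooo$-measurable $K$ and a $\uuu\otimes\mathcal{P}$-measurable $L$ with $K^u$ (resp.\ $L^u$) indistinguishable from ${}^{\circ}H^u$ (resp.\ ${}^{p}H^u$) for every $u\in U$. Clearly $\mathcal{H}$ is a vector space containing constants. The multiplicative class I will use to generate the product $\sigma$-field is
\[
\mathcal{C}=\{(u,t,\omega)\mapsto f(u)g(t,\omega):\,f\in b\uuu,\,g\text{ bounded and }\mathcal{B}(\mathbb{R}_+)\otimes\cff\text{-measurable}\}.
\]

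For $H=f\otimes g\in\mathcal{C}$, I simply take $K(u,t,\omega)=f(u)\cdot{}^{\circ}g(t,\omega)$ and $L(u,t,\omega)=f(u)\cdot{}^{p}g(t,\omega)$, where ${}^{\circ}g$ and ${}^{p}g$ are chosen (classical) versions of the unparametrised optional/predictable projections of $g$. These maps are evidently $\uuu\otimes\ooo$- and $\uuu\otimes\mathcal{P}$-measurable; and for fixed $u$, since $f(u)$ is a deterministic constant, $K^u=f(u){}^{\circ}g$ is a version of ${}^{\circ}(f(u)g)={}^{\circ}H^u$ (same for the predictable projection). Thus $\mathcal{C}\subset\mathcal{H}$.

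The crucial step is to show $\mathcal{H}$ is closed under bounded monotone convergence. Suppose $H_n\uparrow H$ in $\mathcal{H}$ with associated projections $K_n,L_n$. Define $K:=\limsup_n K_n$ and $L:=\limsup_n L_n$, which are $\uuu\otimes\ooo$- and $\uuu\otimes\mathcal{P}$-measurable respectively. For each fixed $u\in U$, the classical monotone convergence property of optional (resp.\ predictable) projections gives ${}^{\circ}H_n^u\uparrow{}^{\circ}H^u$ and ${}^{p}H_n^u\uparrow{}^{p}H^u$ up to $P$-indistinguishability. Combining this with the fact that $K_n^u$ is indistinguishable from ${}^{\circ}H_n^u$ for each $n$, one obtains that $K^u$ is a version of ${}^{\circ}H^u$, and similarly for $L$. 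By the monotone class theorem, $\mathcal{H}$ contains every bounded $\uuu\otimes\mathcal{B}(\mathbb{R}_+)\otimes\cff$-measurable function. Finally, for a positive (possibly unbounded) $H$, apply the bounded case to $H\wedge n$ and pass to the supremum, again using the monotone convergence property of the projections.

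I expect the main subtle point to be the bookkeeping for the monotone-limit step: one must verify that the \emph{unparametrised} optional and predictable projections commute with bounded monotone limits up to indistinguishability, so that taking pointwise $\limsup$ in $(u,t,\omega)$ preserves both the joint measurability and the ``version of ${}^{\circ}H^u$'' property for each individual $u$. Everything else---multiplicativity of $\mathcal{C}$, generation of the product $\sigma$-field, and vector-space/closure properties of $\mathcal{H}$---is routine.
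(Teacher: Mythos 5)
Your proof is correct, and since the paper states \cref{L3SY78} as a quotation from Stricker--Yor~\cite{SY78} without reproducing an argument, your monotone class proof fills in exactly the right justification; it is also, as far as I recall, the approach taken in the original source. Two small remarks. First, when you take $K:=\limsup_n K_n$, nothing forces the global bound $|K_n|\le|H|_\infty$ to hold pointwise outside the $u$-dependent exceptional sets; this is harmless for measurability and for the ``version of $^{\circ}H^u$'' conclusion, but it is clean to replace $K_n$ by $(K_n\wedge\|H\|_\infty)\vee(-\|H\|_\infty)$ before passing to the limit so that the limiting $K$ is bounded as well. Second, the key external input you invoke---that $^{\circ}(\,\cdot\,)$ and $^{p}(\,\cdot\,)$ commute with increasing limits of non-negative measurable processes, up to indistinguishability---deserves a citation (e.g.\ Dellacherie--Meyer, Th\'eor\`eme VI.43ff, or Th\'eor\`eme 50 of {\cite{MR2273672}}), as it is precisely the ``classical monotone convergence property'' your argument rests on, and it is also what lets you handle the unbounded positive case at the end. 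With those cosmetic touches, the argument is complete and self-contained.
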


%

\begin{rem} \label{rem_optional}Suppose that a measurable function $H$ on $U \times
\mathbb{R}_+ \times \Omega$ is such that, for all $u$, the process $(H^u_t)$
is optional. Then there exists a function $K$ indistinguishable from $H$ such
that $(u, (t, \omega)) \mapsto K (u, t, \omega)$ is $\uuu \otimes
\ooo$-measurable. Indeed, one immediately reduces this by means of a
bijection from $\mathbb{R}$ onto $] 0, 1 [$ in the case where $H$ is bounded,
and it is then sufficient to take $K = {^{\circ} H}$. 
\end{rem}

\begin{prop}[{\cite{SY78}}, Proposition 5]\label{SY78-Prop5}
   Let $M$ be a local martingale, and $J : U \times
  \mathbb{R}_+ \times \Omega \to \mathbb{R}$ a $\uuu \otimes
  \ooo$-measurable function such that
  \[ 
  \forall u \in U, \quad \mathbb{E} \left[ \left( \int_0^{\infty} (J^u_s)^2
     d [M, M]_s \right)^{\frac12} \right] < \infty . 
  \]
  Then there exists a $\uuu \otimes \ooo$-measurable function $Y
  : U \times \mathbb{R}_+ \times \Omega \to \mathbb{R}$ such that, for all $u
  \in U$, the process $(Y_t^u)$ is indistinguishable from the optional
  stochastic integral $\left( \int_0^t J^u_s dM_s \right)_{t \geq 0}$.
\end{prop}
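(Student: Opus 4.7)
The strategy is to retrace the classical isometric construction of the stochastic integral while tracking $\uuu$-measurability at each step, and to close with the measurable-limit theorem of Proposition~\ref{SY_Prop1}. First, by localizing with the $u$-independent stopping times $T_n := \inf\{t : |M_t| + [M,M]_t \geq n\}$, one reduces to the case where $M$ is a bounded martingale and $\E\int_0^\infty (J^u_s)^2\, d[M,M]_s < \infty$ for every $u \in U$.

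For elementary integrands of the form $H(u,\omega)\mathbf{1}_{]S,T]}(s)$, with $S \leq T$ stopping times and $H$ a bounded $\uuu \otimes \cff_S$-measurable random variable, the integral $H(u,\omega)(M^T_t - M^S_t)$ is manifestly $\uuu \otimes \ooo$-measurable, and by linearity so is any finite sum of these. The heart of the argument is to produce a sequence $J^n$ in the span of such elementary integrands, itself jointly $\uuu \otimes \ooo$-measurable, with
\[
  \E\int_0^\infty (J^u_s - J^{u,n}_s)^2\, d[M,M]_s \longrightarrow 0 \quad \text{for each fixed } u \in U.
\]
This can be arranged by passing to the parameter-measurable predictable projection $^p J$ supplied by Proposition~\ref{L3SY78}, dyadically discretizing it, and handling the purely discontinuous contribution proper to the optional stochastic integral separately.

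Once such a sequence is in hand, It\^o's isometry gives $\int_0^t J^{u,n}_s\, dM_s \to \int_0^t J^u_s\, dM_s$ in $L^2(\P)$ for each fixed $(u,t)$, and each approximating integral is $\uuu \otimes \cff_t$-measurable. Proposition~\ref{SY_Prop1}(ii) then produces, for each fixed $t$, a $\uuu \otimes \cff_t$-measurable random variable $H_t(u,\cdot)$ equal $\P$-a.s.\ to $\int_0^t J^u_s\, dM_s$. Since for every $u$ the process $\int_0^\cdot J^u_s\, dM_s$ has a c\`adl\`ag version, Lemma~1 of~\cite{SY78} (stated at the top of the appendix) combined with Remark~\ref{rem_optional} upgrades this pointwise-in-$t$ measurability to a single $\uuu \otimes \ooo$-measurable process $Y$ indistinguishable, for each $u$, from $\int_0^\cdot J^u_s\, dM_s$.

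The main obstacle is the parameter-measurable construction of the approximating sequence $J^n$: because $J$ is only optional rather than predictable, the na\"ive left-endpoint dyadic discretization is not directly compatible with stochastic integration against $M$. One circumvents this via the predictable projection (whose joint measurability is provided by Proposition~\ref{L3SY78}) and by controlling the jump contribution that distinguishes the optional from the predictable stochastic integral, using Proposition~\ref{SY_Prop1}(i) applied to parameter-measurable compensated jump sums.
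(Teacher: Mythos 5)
The paper does not itself supply a proof of this proposition: it is quoted verbatim from Stricker--Yor~\cite{SY78} (their Proposition~5), and the paper relies on that citation. Your sketch retraces the route that \cite{SY78} actually take: localize by $u$-independent stopping times, observe joint measurability for elementary integrands, pass to general integrands via an $L^2$ (or $\MH^1$) approximation, invoke the parameter-measurable limit result (Proposition~\ref{SY_Prop1}) to obtain joint measurability at each fixed time, and then use Lemma~1 of \cite{SY78} together with Remark~\ref{rem_optional} to upgrade these snapshots to a single $\uuu\otimes\ooo$-measurable c\`adl\`ag process. You also correctly identify the genuine obstruction in the local-martingale/optional-integrand case, namely that the optional stochastic integral is not obtained by a na\"ive predictable-type discretization, and you circumvent it via the Meyer decomposition $J\cdot M = {}^pJ\cdot M + \bigl[(J-{}^pJ)\Delta M\bigr]$ with the predictable projection supplied in parameter-measurable form by Proposition~\ref{L3SY78}; this is exactly the decomposition in the source. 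The remaining vagueness (the precise dyadic scheme and the $\uuu$-measurable construction of the compensated jump martingale) is the technical core of the Stricker--Yor argument and would need to be spelled out to have a complete proof, but as a plan it is faithful to the approach the paper defers to.
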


\begin{remark}

If $M$ is only a semimartingale, similar results hold but $J$ needs to be $\uuu$-predictable ($\uuu \otimes \mathcal{P}$-measurable), cf. Theorem 1 in {\cite{SY78}}.

\end{remark}

\bibliography{processes_martingale}

\end{document}